\documentclass[11pt]{amsart}
\usepackage{tikz}
\usepackage{tkz-graph}
\usepackage{pgf}
\usepackage{amscd,amssymb,latexsym,verbatim}
\usepackage{hyperref}
\usepackage{graphicx}
\usepackage[T1]{fontenc}
\usepackage{multirow}
\usepackage{enumitem}
\usepackage{subfigure}
\usepackage[shortcuts]{extdash}

\usetikzlibrary{shapes,matrix,calc,arrows,decorations.markings,knots,patterns,intersections,cd}

\addtolength{\hoffset}{-1cm}
\addtolength{\textwidth}{2cm}
\linespread{1.2}

\theoremstyle{plain}
\newtheorem{thm0}{Theorem}

\newtheorem*{thmmon}{Monodromy Theorem}
\newtheorem{thm}{Theorem}[section]
\newtheorem{lem}{Lemma}[section]

\newtheorem{prop}{Proposition}[section]
\newtheorem{cor}{Corollary}[section]

\theoremstyle{remark}
\newtheorem{rem}{Remark}[section]

\theoremstyle{definition}
\newtheorem{dfn}{Definition}[section]
\newtheorem{ntc}{Notation}[section]
\newtheorem{ntc0}[thm0]{Notation}

\newtheorem{example}[thm]{Example}

\makeatletter
\let\c@lem\c@thm
\let\c@prop\c@thm
\let\c@claim\c@thm
\let\c@cor\c@thm
\let\c@rem\c@thm
\let\c@dfn\c@thm
\let\c@prps\c@thm
\let\c@notation\c@thm
\makeatother

\def\makeautorefname#1#2{\expandafter\def\csname#1autorefname\endcsname{#2}}

\makeautorefname{lem}{Lemma}%
\makeautorefname{rem}{Remark}%
\makeautorefname{thm}{Theorem}%
\makeautorefname{thm0}{Theorem}%
\makeautorefname{dfn}{Definition}%
\makeautorefname{claim}{Claim}%
\makeautorefname{example}{Example}%
\makeautorefname{prop}{Proposition}%
\makeautorefname{cor}{Corollary}%
\makeautorefname{section}{Section}%
\makeautorefname{prps}{Properties}%
\makeautorefname{notation}{Notation}%

\newcommand{\cF}{\mathcal{F}}

\newcommand{\V}{V}
\newcommand{\E}{E}

\newcommand{\CC}{\mathbb{C}}

\newcommand{\KK}{\mathbb{K}}

\newcommand{\bz}{\mathbb{Z}}

\newcommand{\RR}{\mathbb{R}}

\newcommand{\bt}{\mathbb{T}}

\newcommand{\one}{\mathbf{1}}

\newcommand{\n}{r}

\DeclareMathOperator{\tor}{Tor}

\DeclareMathOperator{\coker}{Coker}
\DeclareMathOperator{\im}{Im}

\DeclareMathOperator{\rk}{Rank}

\DeclareMathOperator{\lcm}{lcm}


\title{Module structure of the homology of right-angled Artin kernels}

\author[E.~Artal]{Enrique Artal Bartolo}

\author[J.I.~Cogolludo]{Jos\'e Ignacio Cogolludo-Agust{\'\i}n}
\address{Departamento de Matem\'aticas, IUMA, Facultad de Ciencias\\
Universidad de Zaragoza\\
c/ Pedro Cerbuna 12\\
E-50009 Zaragoza SPAIN}
\email{artal@unizar.es,jicogo@unizar.es}

\author[S. L{\'o}pez de Medrano]{Santiago L{\'o}pez de Medrano}
\address{Instituto de Matemáticas\\ 
Universidad Nacional Autónoma de México\\
Ciudad Universitaria1\\
04510 México, CDMX\\
MEXICO  }
\email{santiago@im.unam.mx}

\author[D.~Matei]{Daniel Matei}
\address{Institute of Mathematics of the Romanian Academy\\
P.O. Box 1-764\\ RO-014700 Buch\-arest\\ Romania}
\email{Daniel.Matei@imar.ro}

\thanks{First two named authors are partially supported by
MTM2016-76868-C2-2-P and Grupo ``\'Algebra y Geometr{\'\i}a'' of
Gobierno de Arag\'on/Fondo Social Europeo. The third named author thanks the Universidad de Zaragoza for its hospitality during the preparationo of this work.
The fourth named author was partially supported by the Romanian Ministry of National Education,
CNCS-UEFISCDI, grant PNII-ID-PCE-2012-4-0156}

\begin{document}

\begin{abstract}
In this paper, we study the module structure of the homology of Artin kernels, i.e., kernels of 
non-resonant characters from right-angled Artin groups onto the integer numbers, the module
structure being with respect to the ring $\KK[t^{\pm 1}]$, where $\KK$ is a field
of characteristic zero. Papadima and Suciu
determined some part of this structure by means of the flag complex of the graph of the Artin group.
In this work, we provide more properties of the torsion part of this module, e.g., the dimension
of each primary part and the maximal size of Jordan forms (if we interpret the torsion structure
in terms of a linear map). These properties are stated in terms of homology properties of
suitable filtrations of the flag complex and suitable double covers of an associated toric complex.
\end{abstract}

\maketitle

\section*{Introduction}

The homological properties of Artin groups have been extensively studied in the recent years, see \cite{PS:09,SV:13} 
and references therein. These groups are especially attractive because of the combinatorial nature of their definition 
that might influence the properties of the groups. In this work we deal with a particular type of Artin groups called 
right-angled Artin groups, or RAAG for short, and the goal is to describe the homology of the kernels of most integer 
characters of these groups in combinatorial terms.

The definition of a RAAG can be given starting with a simplicial graph $\Gamma=(\V,\E)$, where $\V$ is the set of 
vertices and $\E\subset {\binom{\V}{2}}$ is the set of edges (seen as subsets of two elements in~$\V$).
The RAAG associated with $\Gamma$ can be defined via a finite presentation as
\begin{equation}\label{eq:ag}
G_\Gamma:=
\langle
v\in \V\mid [v,w]=1\text{ if }\{v,w\}\in \E
\rangle.
\end{equation}
Given a surjective character $\chi:G_\Gamma\twoheadrightarrow\mathbb{Z}$, one is interested in the \emph{Artin kernel}
$A_\Gamma^\chi:=\ker\chi$. As a first approach one can consider their homology groups with a module structure given as 
follows. Let $t\in G_\Gamma$ such that $\chi(t)=1$; conjugation by~$t$ defines an automorphism of~$A_\Gamma^\chi$ which induces 
a $\KK[t^{\pm 1}]$-module structure on $H_k(A_\Gamma;\KK)$ for any field~$\KK$; this structure does not depend
on the particular choice of~$t$. The main goal of this paper is to describe the structure of such modules. 

In particular, detailed properties on their structure will be given when $\KK$ has characteristic~$0$ 
and the character~$\chi$ satisfies $\chi(v)\neq 0$, $\forall v\in \V$,
a \emph{non-resonant} character. Moreover, a complete description will be given 
for $k\leq 2$ in terms of geometric properties of objects associated with $\Gamma$ and~$\chi$.

This problem was considered in~\cite{PS:09} and the following result summarizes the starting point of this paper.

\begin{thm0}[\cite{PS:09}]\label{thm:ps}
\[
H_{k+1}(A^\chi_\Gamma;\KK)\cong\KK[t^{\pm 1}]^{\n_k}
\bigoplus_{d\in\mathbb{Z}_{>0}}\bigoplus_{j>0}
\left(\KK[t^{\pm 1}]/\langle\Phi_d^j\rangle\right)^{\n_{k,j}(d)}
\] 
where $\Phi_d$ is the $d$-cyclotomic polynomial and the exponents $\n_k,\n_{k,j}(d)$ depend on~$\Gamma$.
\end{thm0}

Our purpose is to provide more properties concerning the torsion part. A general strategy is proposed 
to study the $\Phi_d$\=/primary part 
of this module $H_{k+1}(A^\chi_\Gamma;\KK)$ by reducing it to the study of the $(t+1)$-primary part of 
the homology of $H_{k+1}(A^\rho_\Gamma;\KK)$ with respect to an even character $\rho$ associated with~$\chi$.
The study of the $\Phi_2$-primary part for each $H_{k+1}$ is done in terms of a filtration of the flag complex
depending on $k$ and~$\rho$ and the two-fold cover defined by~$\rho$. Complete formulas are given for $k=0,1$, and in a large family of cases, the results
obtained determine the torsion of the homology.

Nevertheless, these formulas are quite complicated and it is hard to interpret them in terms of the properties of $\Gamma$. In the case the flag complex defined by $\Gamma$ satisfies certain acyclicity conditions more precise
formulas can be given for the exponents in \autoref{thm:ps}. These formulas completely determine the 
Artin exponents for $k=0,1$ as shown in \autoref{thm:main}. For $k>1$, we obtain the dimension of the $\Phi_d$-primary part, the number of its factors and
the exponent $r_{k,k+2}(d)$. Moreover, if this exponent vanishes we can detect the maximal $j$ such that $r_{k,j}(d)>0$.

Let us rephrase this strategy in another language. Let us denote by $T_{k+1}(A^\chi_\Gamma):=\tor H_{k+1}(A^\chi_\Gamma;\KK)$; it is a finitely dimensional $\KK$-vector space which coincides with the whole homology when
the flag complex of $\Gamma$ is $k$-connected. The action of $t$ induces an endomorphism
$t:T_{k+1}(A^\chi_\Gamma)\to T_{k+1}(A^\chi_\Gamma)$. The Jordan form of this endomorphism is equivalent 
to the decomposition of the torsion in~\autoref{thm:ps}. As a combination of some results in~\cite{PS:09}
and part of the results of this work, we obtain a version of the Monodromy Theorem for complex singularities.

\begin{thmmon}
Let $\chi$ be a non-resonant character. Let $t:T_{k+1}(A^\chi_\Gamma)\to T_{k+1}(A^\chi_\Gamma)$ be the endomorphism
defined by the monodromy action. Then 
\begin{enumerate}[label=\rm(\arabic*)]
\item\label{mon1} The characteristic polynomial of $t$ is a product of cyclotomic polynomials. More precisely,
the roots of the characteristic polynomial are roots of unity whose order is a divisor of $\chi(v)$ for some 
$v\in V$.
\item\label{mon2} The Jordan blocks for the eigenvalue $1$ have size~$1$.
\item\label{mon3} The Jordan blocks of $t$ acting on $T_{k+1}(A^\chi_\Gamma)$ have size at most~$k+2$.
\end{enumerate}
\end{thmmon}

The items \ref{mon1} and \ref{mon2} have been proved in~\cite{PS:09}; \ref{mon3} is a consequence of part this work,
see~\autoref{cor:Jordan_max}. In addition, using \autoref{prop:maxJordan} the number of such blocks can be recovered. 
Moreover, \autoref{prop:maxJordan1} explains how to recover the actual maximal size of Jordan blocks
in each case. Also, formulas in \autoref{thm:bkl} allow to recover the dimension of each primary part.
The dimension of the eigenspaces can be recovered from \autoref{thm:lk}. All these results are specially simple to 
state in the particular case when the flag complex is $k$-acyclic, see~\autoref{thm:main}.

The moment-angle variety $\bt$ of the flag complex of $\Gamma$ with respect to $(\mathbb{S}^1,1)$, see~\eqref{eq:ac},
is an Eilenberg-McLane space for $G_\Gamma$, which can be naturally realized as a subvariety of $(\mathbb{S}^1)^V$. 
Let us denote by $f^\chi$ the restriction of the monomial $\displaystyle\prod_{v\in V}t_v^{\chi(v)}$ to~$\bt$. The pull-back
\begin{equation}\label{eq:pull-back}
\begin{tikzcd}
\mathbb{T}^\chi\ar[r]\ar[d,"\tilde{\chi}" left]&\RR\ar[d]\\
\mathbb{T}\ar[r,"f^\chi"]&\mathbb{S}^1
\end{tikzcd}
\end{equation}
yields the infinite cyclic cover defined by $\chi$, whose fundamental group is $A^\chi_\Gamma$.
The moment-angle variety with respect to $(\CC^*,1)$ has the same homotopy type as~$\bt$; the map
$f^\chi$ can be defined in this setting and is a holomorphic map, which connects the classical 
Monodromy Theorem with the one above.

The paper is organized as follows. In \autoref{sec:settings} we give the classical description of the toric
complex $\bt$ associated with a RAAG, say $G_\Gamma$. This complex provides an Eilenberg-McLane space of $G_\Gamma$ 
and hence, its homological properties and those of cyclic covers will be key for our purposes. In particular, 
since $\KK[t^{\pm 1}]$ is a principal ideal domain, the space $H_{k+1}(A^\chi_\Gamma;\KK)$,
as a finitely generated $\KK[t^{\pm 1}]$-module, decomposes as a direct sum of its free and
torsion parts, which will be the focus of this work. The required tools to attack this problem will be presented in
\autoref{sec:torsion}. In this expository section we recover results by Papadima-Suciu~\cite{PS:09} on the 
$(t-1)$-torsion part of the homology $H_{k+1}(A^\chi_\Gamma;\KK)$ in terms of the graph~$\Gamma$. 
The Fitting ideals of the module $H_{k+1}(A^\chi_\Gamma;\KK)$ are considered in \autoref{sec:fitting}, the
purpose is to recover its $\Phi_d$-primary torsion part in terms of both the flag complex associated with 
$\Gamma$ and a weight filtration coming from $d$. This will give a precise bound of $k+2$ for the maximal 
order of $\Phi_d$ as a torsion element in $H_{k+1}(A^\chi_\Gamma;\KK)$ as well as a formula for the weighted 
sum of exponents $\sum_{j=1}^{k+2}\n_{k,j}(d)$ in terms of topological invariants of the weight filtration. 

In \autoref{sec:even} we introduce the concept of \emph{even characters} and prove a reduction 
of the general problem of studying the $\Phi_d$-primary part associated with a given character $\chi$
to the $(t+1)$-primary part associated with an even character $\rho_{\chi,d}$ that depends on $\chi$
and $d>1$. This allows us to give explicit formulas for the sum of the exponents 
$\sum_{j=1}^{k+2}\n_{k,j}(d)$ in terms of the $k$-homology of a double cover of the toric complex $\bt$
associated with~$\Gamma$. This reduction is key in the effective computation of the torsion invariants,
however explicit formulas are too intricate to be presented here. In the particular case when 
the flag complex $\cF$ has some acyclicity properties, explicit formulas for the torsion exponents
are presented in \autoref{sec:main}. In the final \autoref{sec:examples} we present some 
characteristic examples that describe the reduction strategy to study the torsion part in terms of
topological properties of the flag complex and its filtrations.

\section{Settings}
\label{sec:settings}
\numberwithin{equation}{section}

\subsection{The toric complex \texorpdfstring{$\bt$}{T}}
\label{sec:toric}
\mbox{}

Most of this section appears in~\cite{PS:09}; we recall the parts we will need.
As in the Introduction, we fix a finite simplicial graph $\Gamma$, whose set of vertices is denoted by $\V$ and 
set of edges $\E\subset {\binom{\V}{2}}$. We define the right-angled group $G_\Gamma$ as in~\eqref{eq:ag}.

Let $\cF$ be the flag complex of~$\Gamma$. This is a simplicial complex with set of vertices~$\V$ and such that 
$X=\{v_0,v_1,\dots,v_r\}$ is an $r$-simplex of~$\cF$ if and only if $X$ is a clique of $\Gamma$.
For the sake of simplicity we will work over a field $\KK$ of characteristic 0.
We will denote by $(\tilde{C}_*(\cF),\partial)$ the augmented chain complex of $\cF$ over $\KK$, 
i.e., $\tilde{C}_{-1}(\cF)=\KK$ and $\partial:\tilde{C}_{0}(\cF)\to\tilde{C}_{-1}(\cF)$ sends any vertex to~$1$.

Let us associate more objects to~$\Gamma$. The moment-angle complex $\bt$ of~$\cF$ associated to the pair $(\mathbb{S}^1,1)$
is the $CW$-subcomplex of $(\mathbb{S}^1)^{\V}$ defined as 
\begin{equation}\label{eq:ac}
\bt:=\bigcup_{\sigma\in\cF} (\mathbb{S}^1)^{\sigma}
\end{equation}
We consider the empty simplex as a $(-1)$-dimensional simplex and by convention
$(\mathbb{S}^1)^{\emptyset}=\one:=(1)_{v\in\V}$. The circle~$\mathbb{S}^1$
has a natural $CW$-cell decomposition with cells $\{1\}$ and $\mathbb{S}^1\setminus\{1\}$.
It induces a $CW$-complex structure on~$\bt$ such that an $r$-simplex $\sigma\in\cF$ determines
an $(r+1)$-cell 
\[
\sigma^\bt:=\left(\mathbb{S}^1\setminus\{1\}\right)^\sigma\times\{1\}^{\V\setminus\sigma}.
\]
In particular, $\emptyset^\bt=\{\one\}$ is the only~$0$-cell. For each~$v\in\V$, $\overline{v^\bt}$
is a circle and for each edge $e=\{v,w\}$, $\overline{e^\bt}$ is a two-dimensional torus where 
$\overline{v^\bt}$, $\overline{w^\bt}$ are two circles intersected at one point.
We have the following classical result.

\begin{prop}[\cite{ChD:95,MevW:95}]
The fundamental group of $\bt$ is isomorphic to $G_\Gamma$. Moreover,  $\bt$
is an Eilenberg-McLane space for this group.
\end{prop}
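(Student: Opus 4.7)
My plan is to read both claims directly from the explicit CW structure on $\bt$ described in~\eqref{eq:ac}.

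For the fundamental group, I would work with the 2-skeleton. The 0-skeleton consists of the single point $\one$, and the 1-skeleton $\bigvee_{v\in\V}\overline{v^\bt}$ is a wedge of circles, so $\pi_1(\bt^{(1)})$ is free on $\V$. For each edge $e=\{v,w\}\in\E$ the closure $\overline{e^\bt}=(\mathbb{S}^1)^{\{v,w\}}$ is the standard 2-torus with the two circles $\overline{v^\bt}$, $\overline{w^\bt}$ meeting at $\one$, so its 2-cell attaches along the commutator word $[v,w]$. Cells of dimension $\geq 3$ contribute no further relations. The standard presentation theorem for the $\pi_1$ of a CW-complex then yields exactly~\eqref{eq:ag}, so $\pi_1(\bt)\cong G_\Gamma$.

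For asphericity, the plan is to cubulate $\bt$ and apply the theory of non-positively curved cube complexes. The closure of each cell $\sigma^\bt$ in $\bt$ is the torus $(\mathbb{S}^1)^\sigma$, the quotient of the cube $[0,1]^\sigma$ by opposite-face identifications. In the universal cover $\tilde\bt$ these identifications lift to genuine cubes, so $\tilde\bt$ acquires the structure of a cube complex on which $G_\Gamma$ acts freely, cocompactly and cellularly with quotient $\bt$. By Gromov's link criterion, $\tilde\bt$ is locally CAT(0) iff the link of every vertex is a flag simplicial complex; since $\tilde\bt$ is simply connected by construction, the Cartan--Hadamard theorem then promotes this to global CAT(0) and hence to contractibility of $\tilde\bt$, giving $\bt=K(G_\Gamma,1)$. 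Because $G_\Gamma$ acts transitively on vertices of $\tilde\bt$, it suffices to check the link condition at a single lift of~$\one$.

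The heart of the argument, and the main technical obstacle, is this link computation. Each circle $\overline{v^\bt}$ contributes two tangent directions at $\one$, yielding link vertices $v^+, v^-$, so the link has vertex set $\V\sqcup\V$. A subset $\{v_{1}^{\epsilon_1},\dots,v_{k}^{\epsilon_k}\}$ with distinct underlying $v_i$ spans a simplex of the link precisely when the torus $(\mathbb{S}^1)^{\{v_1,\dots,v_k\}}$ appears as a cell of $\bt$, i.e., when $\{v_1,\dots,v_k\}$ is a simplex of $\cF$. The link is therefore the \emph{octahedralization} of $\cF$ obtained by doubling every vertex. Since a pair $\{v_{i}^{\epsilon_i},v_{j}^{\epsilon_j}\}$ is in the link iff $\{v_i,v_j\}\in\E$, and $\cF$ is by definition the flag complex of $\Gamma$, the octahedralization inherits the flag property. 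Gromov's criterion then applies, completing the proof.
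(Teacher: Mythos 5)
Your proof is correct: the paper offers no argument of its own for this proposition, citing \cite{ChD:95,MevW:95}, and your two steps --- reading off the presentation~\eqref{eq:ag} from the $2$-skeleton, and proving asphericity by viewing $\bt$ as a non-positively curved cube complex and applying Gromov's link condition plus Cartan--Hadamard, the link of the unique vertex being the octahedralization of $\cF$ --- are exactly the standard argument from those references. The one point worth spelling out is the flagness of the octahedralization: $v^{+}$ and $v^{-}$ are never adjacent in the link, so a set of pairwise adjacent link vertices has distinct underlying vertices forming a clique of $\Gamma$, hence a simplex of $\cF$ by the flag property, and therefore every choice of signs over that simplex spans a simplex of the link.
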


%

The cellular complex $(C_*(\bt),\partial_\bt)$
is minimal (i.e., $\partial_\bt=0$) and, if we drop the differential maps,
$C_*(\bt)\cong\widetilde{C}_{*-1}(\cF)$. 
In particular,
$H_*(\bt;\KK)=\tilde{C}_{*-1}(\cF)$ as the differential of $C_*(\bt)$ vanishes. 

\subsection{The associated \texorpdfstring{$\bt^\chi$}{T} complex}
\mbox{}

In the Introduction we have considered a surjective character
$\chi:G_\Gamma\to\bz$; for $v\in\V$ we denote $n_v:=\chi(g_v)$.
Note that this character is completely determined by the 
tuple $(n_v)_{v\in\V}$.
For technical reasons we impose the restrictions $n_v\neq 0$, $\forall v\in\V$.
Without loss of generality, we are going to assume that
$n_v>0$. The epimorphism
condition is equivalent to $\gcd_{v\in\V}\{n_v\}=1$. 

Let $\tilde{\chi}:\bt^\chi\to\bt$ be the
infinite cyclic covering associated to~$\chi$ as in~\eqref{eq:pull-back}. The fiber $\tilde{\chi}^{-1}(\one)$ is identified
with~$\bz$. We pick up a generator $t:\bt^{\chi}\to\bt^{\chi}$ of the deck automorphism
group of $\tilde{\chi}$, such that if $\tilde{\one}\in\tilde{\chi}^{-1}(\one)$
is associated to~$0$, then the identification $\bz\equiv\tilde{\chi}^{-1}(\one)$ is given
by $n\equiv t^n(\tilde{\one})$. Recall that $\pi_1(\bt^\chi)\cong\ker\chi=A_\Gamma^\chi$, the Artin kernel. 

The $CW$-complex structure
of $\bt$ induces another such structure in $\bt^\chi$ and  $(C_*(\bt^\chi),\partial_\chi)$ is 
a complex of $\KK[t^{\pm 1}]$-modules (the structure is induced by the deck automorphisms of the covering).
Note that $\bt^\chi$ is an Eilenberg-McLane space for $A_\Gamma$ and then 
$H_*(\bt^\chi;\KK)$ is isomorphic (as $\KK[t^{\pm 1}]$-module) with $H_*(A_\Gamma^\chi;\KK)$.

Let $\sigma\in\cF$ be a simplex; recall that $\sigma^\bt$ is a cell of $\bt$. Then,
the decomposition of
$\tilde{\chi}^{-1}(\sigma)$ in connected components is a disjoint union of cells.
We fix one of them and denote it by $\sigma^\chi$; below we will describe the concrete
choice for each~$\sigma^\chi$. Then
\[
\tilde{\chi}^{-1}(\sigma)=\bigcup_{n\in\bz} t^n\cdot \sigma^{\chi}.
\]
With its $\KK[t^{\pm 1}]$-module structure, $C_*(\bt^\chi)$ is free and
there is a basis $\{\sigma^\chi\mid\sigma\in\cF\}$. Note that 
$\dim\sigma^\chi=\dim\sigma+1$.

By fixing an order in the set of vertices $V$ one can define an \emph{incidence number} 
$\langle\sigma|\tau\rangle$ for any $\sigma,\tau\in\cF$ as follows
$$\langle\sigma|\tau\rangle = 
\begin{cases}
(-1)^s & \text{ if } \tau = \sigma\setminus \{v_i\}, \text{ and } s=\# \{j \mid v_j\in \tau, i<j\}\\
0 & \text{ otherwise.} 
\end{cases}$$
This number fits in the boundary map formula as 
\[
\partial(\sigma)=\sum_{\tau\in\cF}\langle\sigma|\tau\rangle\tau=
\sum_{v\in\sigma}\langle\sigma|\sigma_{v}\rangle\sigma_{v}.
\]
where, for any $v\in \sigma$, $\sigma_{v}:=\sigma\setminus\{v\}$. 
Then, once $\emptyset^\chi$ has been fixed,
there is a unique choice of oriented cells $\sigma^\chi$, such that
\begin{enumerate}[label=($\chi$\arabic{enumi})]
\item\label{chi1} $\partial^\chi(v^\chi)=(t^{n_v}-1)\cdot \emptyset^\chi$;
\item\label{chi2} $\displaystyle\partial^\chi(\sigma^\chi)=\sum_{v\in\sigma}\left\langle\sigma|\sigma_{v}\right\rangle
(t^{v}-1)\sigma_{v}^\chi$.
\end{enumerate}
\autoref{fig:square} visualizes this choice in the case where
$\sigma=\{v,w\}$ is an edge in $\cF$.
Summarizing, a simplex $\sigma$ in $\cF$ determines elements 
$\sigma\in C_*(\cF)$, $\sigma^\bt\in C_*(\bt)$ and $\sigma^\chi\in C_*(\bt^\chi)$.

\begin{figure}[ht]
\begin{center}
\begin{tikzpicture}[scale=2,vertice/.style={draw,circle,fill,minimum size=0.2cm,inner sep=0}]
\tikzset{flecha1/.style={decoration={
  markings,
  mark=at position #1 with  {\arrow[scale=1.5]{>}}},postaction={decorate}}}
\tikzset{flecha/.style={decoration={
  markings,
  mark=at position #1 with  {\arrow[scale=1.5]{<}}},postaction={decorate}}}

\coordinate (A1) at (0,0);
\coordinate (A2) at (1,0);
\coordinate (A3) at (1,1);
\coordinate (A4) at (0,1);
\draw[flecha1=0.15,flecha1=.4,flecha=.64,flecha=.88] (A1) rectangle (A3);
\foreach \x in {1,...,4}
{
\node[vertice] at (A\x) {};
}
\node[below left] at (A1) {$\emptyset^\chi$};
\node[below right] at (A2) {$t^{n_v}\cdot \emptyset^\chi$};
\node[above left] at (A4) {$t^{n_w}\cdot \emptyset^\chi$};
\node[above right] at (A3) {$t^{n_v+n_w}\cdot \emptyset^\chi$};
\node[below] at ($.5*(A1)+.5*(A2)$) {$v^\chi$};
\node[right] at ($.5*(A3)+.5*(A2)$) {$t^{n_v}\cdot w^\chi$};
\node[above] at ($.5*(A3)+.5*(A4)$) {$t^{n_w}\cdot v^\chi$};
\node[left] at ($.5*(A4)+.5*(A1)$) {$w^\chi$};
\node[] at ($.5*(A1)+.5*(A3)$) {$\sigma^\chi$};

\end{tikzpicture}
\caption{Special lift of the edge~$\sigma$}
\label{fig:square}
\end{center}
\end{figure}

\begin{ntc}
For $v\in\V$, we define $p_v:=t^{n_v}-1$, and for $\sigma$ we define 
$p_\sigma:=\prod_{v\in\sigma} p_v$. With this notation the formula in \ref{chi2}
is equivalent to:
\[
\frac{1}{p_\sigma}\partial^\chi(\sigma^\chi)=
\sum_{v\in\sigma}\left\langle\sigma|\sigma_{v}\right\rangle\frac{1}{p_{\sigma_v}}\sigma_v
\]
Given $k\in\bz$, for the morphism $\partial _k:\tilde{C}_k(\cF)\to\tilde{C}_{k-1}(\cF)$, its matrix in
the bases $\{\sigma\}_{k\text{-simplices}}$ and $\{\tau\}_{(k-1)\text{-simplices}}$ is the incidence
matrix $(\langle\sigma|\tau\rangle)_{\sigma,\tau}$.
\end{ntc}

\begin{rem}
The matrix of $\partial_{k+1}^\chi:C_{k+1}(\bt^\chi)\to C_{k}(\bt^\chi)$, as free $\KK[t^{\pm 1}]$-modules with bases
$\{\sigma^\chi\}_{k\text{-simplices}}$ and $\{\tau^\chi\}_{(k-1)\text{-simplices}}$
is $\left(\langle\sigma|\tau\rangle\frac{p_\sigma}{p_\tau}\right)_{\sigma,\tau}$. Note that the entries
are actually in $\KK[t^{\pm 1}]$, since $\langle \sigma|\tau\rangle=0$ when $\tau$ is not a face of $\sigma$.
If $\tau=\sigma_v$ the entry is $\langle\sigma|\sigma_v\rangle p_v$.
\end{rem}

\subsection{First results on the module structure of Artin kernels}
\mbox{}

Most of the results in this section come from~\cite{PS:09}; they are included since they help the reading.
Since $H_*(\bt^\chi;\KK)$ is a finitely generated $\KK[t^{\pm1}]$-module, eventually 
replacing $\KK$ by a finite extension, there is an isomorphism
\[
H_{k+1}(\bt^\chi;\KK)\cong\KK[t^{\pm 1}]^{\n_k}\oplus\bigoplus_{\lambda\in\KK^*} H_{k,\lambda},
\]
where,
\begin{equation}
\label{eq:Hkj}
H_{k,\lambda}=\bigoplus_{j=1}^\infty \left(\KK[t^{\pm 1}]\Big/\langle(t-\lambda)^j\rangle\right)^{\n_{k,j}^\lambda}.
\end{equation}
In order to recover each term of the direct sum it is useful to compare this homology with the homology
of some tensored complexes. Recall that $H_{k+1}(\bt^\chi;\KK)=H_{k+1}(C_*(\bt^\chi))$.

\begin{lem}\label{lem:uct}
Let $R\supset\KK[t^{\pm 1}]$ be an integral domain with its natural structure as $\KK[t^{\pm 1}]$-module.
Then, there is a natural isomorphism
\[
H_{k+1}(C_*(\bt^\chi)\otimes_{\KK[t^{\pm 1}]} R)\cong 
H_{k+1}(C_*(\bt^\chi))\otimes_{\KK[t^{\pm 1}]} R.
\]
\end{lem}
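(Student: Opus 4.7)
The plan is to recognize this as a form of the universal coefficient theorem in which the Tor obstruction vanishes. The key point is that tensoring by $R$ is exact over $\KK[t^{\pm 1}]$, so it commutes with taking homology.

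First, I would note that $\KK[t^{\pm 1}]$ is a PID, since it is the localization of the PID $\KK[t]$ at the multiplicative system $\{t^n\}_{n \geq 0}$. Second, I would verify that $R$ is flat as a $\KK[t^{\pm 1}]$-module. Since $R$ is an integral domain containing $\KK[t^{\pm 1}]$, every nonzero element of $\KK[t^{\pm 1}]$ is a nonzero element of $R$, hence is not a zero-divisor in $R$. Consequently $R$ is torsion-free as a $\KK[t^{\pm 1}]$-module, and over a PID torsion-free modules are flat.

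With flatness established, the argument is then formal. The chain complex $C_*(\bt^\chi)$ is a complex of free (hence flat) $\KK[t^{\pm 1}]$-modules, with basis $\{\sigma^\chi \mid \sigma \in \cF\}$ as described in the excerpt. Splitting the complex into the usual short exact sequences
\[
0 \to Z_n \to C_n(\bt^\chi) \to B_{n-1} \to 0, \qquad 0 \to B_n \to Z_n \to H_n(C_*(\bt^\chi)) \to 0,
\]
and tensoring with the flat module $R$ preserves exactness. From the first sequence one identifies the cycles and boundaries of $C_*(\bt^\chi) \otimes R$ as $Z_n \otimes R$ and $B_n \otimes R$; from the second (now exact after tensoring) one obtains
\[
H_{k+1}\bigl(C_*(\bt^\chi) \otimes_{\KK[t^{\pm 1}]} R\bigr) \;\cong\; H_{k+1}(C_*(\bt^\chi)) \otimes_{\KK[t^{\pm 1}]} R.
\]
Naturality of the isomorphism follows from the naturality of the tensor product and of the cycle/boundary decomposition.

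There is no substantial obstacle here; the only nontrivial point is the flatness of $R$, which is a general fact about torsion-free modules over a PID. Equivalently, one could phrase the argument via the universal coefficient short exact sequence and observe that $\tor_1^{\KK[t^{\pm 1}]}(H_k(C_*(\bt^\chi)), R) = 0$ because $R$ is flat, but the direct exact-functor argument above is cleaner.
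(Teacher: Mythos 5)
Your proof is correct, but it follows a somewhat different route than the paper. The paper quotes the universal coefficient short exact sequence
\[
0\to H_{k+1}(C_*(\bt^\chi))\otimes_{\KK[t^{\pm 1}]} R\to
H_{k+1}(C_*(\bt^\chi)\otimes_{\KK[t^{\pm 1}]} R)\to
\tor\bigl(H_{k}(C_*(\bt^\chi)),R\bigr)\to 0
\]
and then kills the Tor term by decomposing the finitely generated module $H_k(C_*(\bt^\chi))$ into free and cyclic summands (structure theorem over the PID $\KK[t^{\pm 1}]$) and computing directly that $\tor(\KK[t^{\pm 1}]/\langle p(t)\rangle,R)=0$ because $p(t)$ is not a zero-divisor in the domain $R$. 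You instead establish that $R$ is flat over $\KK[t^{\pm 1}]$ --- torsion-free over a PID, since nonzero Laurent polynomials are nonzero, hence regular, in the domain $R$ --- and then run the standard exact-functor argument with the cycle/boundary short exact sequences. Both arguments rest on the same underlying fact (nonzero elements of $\KK[t^{\pm 1}]$ act injectively on $R$), but yours bypasses both the universal coefficient theorem and the structure theorem, and in particular does not use finite generation of the homology; the paper's version is shorter once the UCT is taken as known, while yours is more self-contained and you correctly note the equivalence of the two phrasings (flatness of $R$ is exactly what makes the Tor term vanish).
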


\begin{proof}
The Universal Coefficient Theorem yields the following exact sequence:
\[
0\to H_{k+1}(C_*(\bt^\chi))\otimes_{\KK[t^{\pm 1}]} R\to
H_{k+1}(C_*(\bt^\chi)\otimes_{\KK[t^{\pm 1}]} R)\to 
\tor(H_{k}(C_*(\bt^\chi)),R)\to 0.
\]
If $A,B$ are $\KK[t^{\pm 1}]$-modules then $\tor(A\oplus B,R)=\tor(A,R)\oplus\tor(B,R)$.
Since $R$ is an integral domain, from its very definition $\tor(\KK[t^{\pm 1}]/\langle p(t)\rangle,R)=0$,
and hence the last term of the sequence vanishes.
\end{proof}

\begin{lem}\label{lem:invertible}
Let $R$ be as in Lemma{\rm~\ref{lem:uct}}. Assume that $\forall v\in V$, the polynomial $p_v(t)=t^{n_v}-1$
is invertible in $R$. Then,
\[
R^{r_k}\cong H_{k+1}(C_*(\bt^\chi)\otimes_{\KK[t^{\pm 1}]} R)\cong\tilde{H}_k(\cF;R).
\]
\end{lem}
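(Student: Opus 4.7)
The plan is to prove the right-hand isomorphism by a change-of-basis argument that turns the twisted boundary of $\bt^\chi$ into the reduced simplicial boundary of $\cF$, and then to deduce the left-hand isomorphism via Lemma~\ref{lem:uct} together with the special case $R=\KK(t)$.

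First I would exploit the hypothesis: since each $p_v$ is invertible in $R$, so is $p_\sigma=\prod_{v\in\sigma}p_v$ for every $\sigma\in\cF$. In the free $R$-module $C_*(\bt^\chi)\otimes_{\KK[t^{\pm 1}]}R$ I would replace the basis $\{\sigma^\chi\}$ by $\{\tilde\sigma^\chi:=\sigma^\chi\otimes p_\sigma^{-1}\}_{\sigma\in\cF}$. Using~\ref{chi2}, a direct computation gives
\[
\partial^\chi(\tilde\sigma^\chi)=\sum_{v\in\sigma}\langle\sigma|\sigma_v\rangle\, p_v\,\sigma_v^\chi\otimes p_\sigma^{-1}=\sum_{v\in\sigma}\langle\sigma|\sigma_v\rangle\,\tilde\sigma_v^\chi,
\]
which is exactly the augmented simplicial boundary of $\cF$ transported to the rescaled basis. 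Hence $C_*(\bt^\chi)\otimes_{\KK[t^{\pm 1}]}R\cong\tilde{C}_{*-1}(\cF)\otimes_\KK R$ as complexes of $R$-modules, and taking homology in degree $k+1$ yields the second isomorphism $H_{k+1}(C_*(\bt^\chi)\otimes_{\KK[t^{\pm 1}]}R)\cong\tilde H_k(\cF;R)$.

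Because $\KK$ is a field, $R$ is $\KK$-flat and simplicial homology commutes with the extension of scalars, so $\tilde H_k(\cF;R)\cong\tilde H_k(\cF;\KK)\otimes_\KK R$ is a free $R$-module whose rank $s_k$ equals $\dim_\KK\tilde H_k(\cF;\KK)$. To identify $s_k$ with the integer $r_k$ in Theorem~\ref{thm:ps}, I would specialize to $R=\KK(t)$, where every nonzero element of $\KK[t^{\pm 1}]$ (in particular every $p_v$) is a unit. Combining Lemma~\ref{lem:uct} with the right-hand isomorphism just established gives
\[
H_{k+1}(\bt^\chi;\KK)\otimes_{\KK[t^{\pm 1}]}\KK(t)\cong\tilde H_k(\cF;\KK(t))\cong\KK(t)^{s_k}.
\]
On the other hand, the cyclotomic torsion summands in the Papadima--Suciu decomposition of $H_{k+1}(\bt^\chi;\KK)$ all die after tensoring with $\KK(t)$, so the left-hand side is $\KK(t)^{r_k}$. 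Comparing ranks forces $s_k=r_k$, and therefore for any $R$ satisfying the hypothesis the first isomorphism $R^{r_k}\cong H_{k+1}(C_*(\bt^\chi)\otimes_{\KK[t^{\pm 1}]}R)$ follows from the second.

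The main obstacle is more conceptual than technical: one needs to spot that the rescaling $\sigma^\chi\mapsto\sigma^\chi\otimes p_\sigma^{-1}$ collapses the twisted boundary into the plain simplicial boundary of the flag complex. The hint is visible in the displayed formula preceding the lemma, so once the rescaling is written down, the rest of the argument reduces to flatness of fields together with the standard fact that $\KK[t^{\pm 1}]$-torsion vanishes after inverting every nonzero polynomial.
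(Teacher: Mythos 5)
Your proposal is correct and follows essentially the same route as the paper: the rescaled basis $\{\sigma^\chi\otimes p_\sigma^{-1}\}$ turns the twisted boundary into the incidence matrix, identifying the tensored complex with $(\tilde{C}_{*-1}(\cF)\otimes_\KK R,\partial)$ and giving the second isomorphism, with the first then coming from Lemma~\ref{lem:uct}. The only cosmetic difference is how the rank is pinned down: the paper observes directly that freeness of $\tilde{H}_k(\cF;R)$ forces the torsion of $H_{k+1}(C_*(\bt^\chi))\otimes_{\KK[t^{\pm 1}]}R$ to vanish, so the rank is $r_k$ for every admissible $R$, whereas you first specialize to $R=\KK(t)$ to identify $\dim_\KK\tilde{H}_k(\cF;\KK)$ with $r_k$ and then transfer back; both are valid.
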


\begin{proof}
From the hypotheses, $\left\{\sigma^\chi\otimes\frac{1}{p_\sigma}\right\}_{k\text{-simplices}}$ and 
$\left\{\tau^\chi\otimes\frac{1}{p_\tau}\right\}_{(k-1)\text{-simplices}}$
are bases of $C_{k+1}(\bt^\chi)\otimes_{\KK[t^{\pm 1}]} R$ and
$C_{k}(\bt^\chi)\otimes_{\KK[t^{\pm 1}]} R$, respectively.
The matrix of $\partial_{k+1}^{\chi}\otimes 1_{R}$ on those
bases is 
$\left(\langle\sigma|\tau\rangle\right)_{\sigma,\tau}$. As a consequence,
this complex is naturally isomorphic to 
$
(C_{*-1}(\cF)\otimes_{\KK}  R,\partial)
$
which yields the second automorphism. The first one is a consequence of Lemma~\ref{lem:uct} 
and the hypotheses of the statement and the fact that $\tilde{H}_k(\cF;R)$ is free.
\end{proof}

We recover two results in~\cite{PS:09}.

\begin{enumerate}[label=(PS\arabic{enumi})]
\item \label{PS1} The number $\n_k$ coincides with $\dim_\KK \tilde{H}_k(\cF;\KK)$. 
\item \label{PS2} 
If $\lambda$ is not a root of unity of order a divisor of some $n_v$, $v\in V$, then $\n_{k,j}^\lambda=0$.
\end{enumerate}

The proof of~\ref{PS1}
is a consequence of Lemma~\ref{lem:invertible}, for $R=\KK(t)$.
The proof of~\ref{PS2} is a consequence of Lemma~\ref{lem:invertible} for 
the localized ring
$R=\KK[t^{\pm 1}]_{\langle t-\lambda\rangle}$. Using Lemma~\ref{lem:uct}, we deduce that there is no torsion
for these $\lambda$.
A third result from~\cite{PS:09} is immediately available

\begin{enumerate}[label=(PS\arabic{enumi})]
\setcounter{enumi}{2}
\item \label{PS3} If $\cF$ is $n$-connected, then 
$\dim_\KK H_{m}(\bt^\chi;\KK)<\infty$ for $0\leq m\leq n+1$. Moreover, if $\tilde H_{n+1}(\cF;\KK)\neq 0$,
then $H_{n+2}(\bt^\chi;\KK)$ has infinite dimension.
\end{enumerate}

\section{The torsion part of the homology}
\label{sec:torsion}

\subsection{Primary parts via localization}
\mbox{}

Fix $\lambda\in\KK^*$ a primitive $d$-root of unity for some $d$ dividing $\lcm\{n_v\}_{v\in\V}$. 

There is a natural monomorphism 
$\eta_\lambda:\KK[t^{\pm 1}]\to\KK[[s]]$ such that $t\mapsto\lambda+s$. 
Note that 
\[
\eta_\lambda(t^n-1)=s^{\alpha_\lambda} q_n(s)\text{ where } q_n\in\KK[[s]]^\times\text{ and }
\alpha_\lambda=
\begin{cases}
0&\text{ if }\lambda^n\neq 1\\
1&\text{ if }\lambda^n=1.
\end{cases}
\]
The use of this map has been borrowed from~\cite{PS:09}.
As a consequence
\[
\eta_\lambda(p_\sigma)=s^{\omega(\lambda,\sigma)} u_\sigma(s) \text{ where } u_\sigma\in\KK[[s]]^\times\text{ and }
\omega(\lambda,\sigma)=\#\{v\in\sigma\mid \lambda^{n_v}=1\}.
\]
Note that $\omega(\lambda,\sigma)$ depends on $d$ rather than a particular choice of the primitive $d$-root of unity.
Hence we denote $\omega_d(\sigma):=\omega(\lambda,\sigma)$.

The following lemma is useful for the study of the torsion part of the homology of $\bt^\chi$.

\begin{lem}
\label{lem:torsion}
The torsion part of  $H_{k+1}(C_*(\bt^\chi)\otimes_{\eta_\lambda}\KK[[s]])$
coincides with the torsion part of $\coker(\partial_{k+2}^\chi\otimes 1_{\KK[[s]]})$
and with
\[
H_{k,\lambda}\otimes_{\eta_\lambda}\KK[[s]]=
\bigoplus_{j=1}^\infty \left(\KK[[s]]\Big/\langle s^j\rangle\right)^{\n_{k,j}^\lambda}.
\]
\end{lem}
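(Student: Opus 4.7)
The plan is to attack both equalities in tandem. First I would invoke \autoref{lem:uct}, which applies because $\KK[[s]]$ is an integral domain, to identify
\[
H_{k+1}(C_*(\bt^\chi)\otimes_{\eta_\lambda}\KK[[s]]) \cong H_{k+1}(\bt^\chi;\KK) \otimes_{\eta_\lambda} \KK[[s]].
\]
Using the direct-sum decomposition of $H_{k+1}(\bt^\chi;\KK)$ into a free part $\KK[t^{\pm 1}]^{\n_k}$ and primary parts $H_{k,\mu}$ recalled in~\eqref{eq:Hkj}, the computation reduces to understanding how the cyclic modules $\KK[t^{\pm 1}]/\langle(t-\mu)^j\rangle$ behave under $\eta_\lambda$. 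Since $\eta_\lambda(t-\mu) = (\lambda-\mu)+s$ is a unit in $\KK[[s]]$ whenever $\mu \neq \lambda$ (a power series with nonzero constant term is invertible), each primary summand for $\mu \neq \lambda$ vanishes after tensoring; for $\mu = \lambda$ the summand becomes $\KK[[s]]/\langle s^j\rangle$; and the free summand becomes the torsion-free module $\KK[[s]]^{\n_k}$. This yields the second claimed equality and identifies the torsion part of the homology with $H_{k,\lambda} \otimes_{\eta_\lambda} \KK[[s]] = \bigoplus_j (\KK[[s]]/\langle s^j\rangle)^{\n_{k,j}^\lambda}$.

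For the first equality I would compare the homology with the cokernel through the standard short exact sequence
\[
0 \longrightarrow H_{k+1}(C_*(\bt^\chi)\otimes_{\eta_\lambda}\KK[[s]]) \longrightarrow \coker(\partial_{k+2}^\chi \otimes 1_{\KK[[s]]}) \longrightarrow \im(\partial_{k+1}^\chi \otimes 1_{\KK[[s]]}) \longrightarrow 0,
\]
obtained by noting that $\ker\partial_{k+1}^\chi/\im\partial_{k+2}^\chi$ injects into $C_{k+1}\otimes\KK[[s]]/\im\partial_{k+2}^\chi$ with cokernel isomorphic to $\im\partial_{k+1}^\chi$. Since $\KK[[s]]$ is a DVR, every submodule of the free module $C_k \otimes_{\eta_\lambda} \KK[[s]]$ is free; in particular $\im(\partial_{k+1}^\chi \otimes 1)$ is torsion-free. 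In any short exact sequence of modules over an integral domain whose right-hand term is torsion-free, the torsion submodules of the left and middle terms agree, which is precisely the first equality.

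I do not expect a serious obstacle: the proof is essentially formal, relying on \autoref{lem:uct} together with the DVR structure of $\KK[[s]]$. The only points to check carefully are the invertibility $\eta_\lambda(t-\mu) \in \KK[[s]]^\times$ for $\mu \neq \lambda$, and the elementary torsion lemma for short exact sequences with torsion-free quotient.
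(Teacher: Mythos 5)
Your proposal is correct and follows essentially the same route as the paper: the same short exact sequence $0\to H_{k+1}\to\coker(\partial_{k+2}^\chi\otimes 1)\to\im(\partial_{k+1}^\chi\otimes 1)\to 0$ with the torsion-free (free) image term for the first equality, and Lemma~\ref{lem:uct} combined with tensoring the primary decomposition (units $\eta_\lambda(t-\mu)$ for $\mu\neq\lambda$) for the second. The only difference is that you spell out details the paper leaves implicit.
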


\begin{proof}
The following exact sequence holds:
\[
\begin{tikzcd}
0\ar[r]&[-16pt] H_{k+1}(C_*(\bt^\chi)\otimes_{\eta_\lambda}\KK[[s]])\ar[r]&[-11pt]
\coker\partial^\chi_{k+2}\otimes 1_{\KK[[s]]}\ar[r]&[-11pt]
\dfrac{C_{k+1}(\bt^\chi)\otimes_{\eta_\lambda}\KK[[s]]}{\ker\partial_{k+1}^\chi\otimes 1_{\KK[[s]]}}\ar[r]
&[-18pt] 0\\[-30pt]
&\rotatebox{90}{$=$}&\rotatebox{90}{$=$}&\rotatebox{-90}{$\cong$}&\\[-30pt]
&\dfrac{\ker\partial_{k+1}^\chi\otimes 1_{\KK[[s]]}}{\im\partial^\chi_{k+2}\otimes 1_{\KK[[s]]}}
&\dfrac{C_{k+1}(\bt^\chi)\otimes_{\eta_\lambda}\KK[[s]]}{\im\partial^\chi_{k+2}\otimes 1_{\KK[[s]]}}
&\im\partial^\chi_{k+1}\otimes 1_{\KK[[s]]}&
\end{tikzcd}
\]
Since the last module is free, the torsion part of $H_{k+1}(C_*(\bt^\chi)\otimes_{\eta_\lambda}\KK[[s]])$ 
coincides with the one of 
$\coker\partial^\chi_{k+2}\otimes 1_{\KK[[s]]}$.
The matrix of $\partial^\chi_{k+2}\otimes 1_{\KK[[s]]}$ is a presentation matrix for $\coker\partial^\chi_{k+2}\otimes 1_{\KK[[s]]}$.
The last statement is a consequence of Lemma~\ref{lem:uct}.
\end{proof}

\begin{lem}
The matrix of the differential $\partial_{k+2}^\chi\otimes 1_{\KK[[s]]}$
is $\left(\langle\sigma|\tau\rangle s^{\omega_d(\sigma)-\omega_d(\tau)}\right)_{\sigma,\tau}$,
for the bases $\left\{\frac{\sigma^\chi\otimes 1}{u_\sigma}\right\}_{k\text{-simplices}}$ and $\left\{\frac{\tau^\chi\otimes 1}{u_\tau}\right\}_{(k-1)\text{-simplices}}$.
\end{lem}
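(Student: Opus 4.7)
The plan is to combine three ingredients already in the excerpt: (i) the explicit formula for $\partial^\chi$ in terms of incidence numbers and the factors $p_v$, as encoded in the Remark following \ref{chi2}, which gives the matrix of $\partial^\chi_{k+2}$ in the bases $\{\sigma^\chi\}$ as $\bigl(\langle\sigma|\tau\rangle\,\tfrac{p_\sigma}{p_\tau}\bigr)_{\sigma,\tau}$; (ii) the factorization $\eta_\lambda(p_\sigma)=s^{\omega_d(\sigma)}u_\sigma(s)$ with $u_\sigma\in\KK[[s]]^\times$; and (iii) the rescaling built into the chosen bases, which cancels the units $u_\sigma,u_\tau$.

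Concretely, I would first apply $\eta_\lambda$ entry-wise to the matrix from the Remark. Since $\eta_\lambda$ is a ring homomorphism, the entry at position $(\sigma,\tau)$ of $\partial^\chi_{k+2}\otimes 1_{\KK[[s]]}$ in the bases $\{\sigma^\chi\otimes 1\}$ and $\{\tau^\chi\otimes 1\}$ is
\[
\langle\sigma|\tau\rangle\,\frac{\eta_\lambda(p_\sigma)}{\eta_\lambda(p_\tau)}
=\langle\sigma|\tau\rangle\,\frac{u_\sigma}{u_\tau}\,s^{\omega_d(\sigma)-\omega_d(\tau)}.
\]
Note that $\langle\sigma|\tau\rangle=0$ unless $\tau\subset\sigma$, in which case $\omega_d(\sigma)\geq\omega_d(\tau)$, so the exponent of $s$ is nonnegative and the entry lies in $\KK[[s]]$.

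Next I would perform the change of basis from $\{\sigma^\chi\otimes 1\}$ to $\{(\sigma^\chi\otimes 1)/u_\sigma\}$ on both source and target. Writing $e_\sigma=\sigma^\chi\otimes 1$ and $e'_\sigma=e_\sigma/u_\sigma$, one has $e_\sigma=u_\sigma e'_\sigma$, hence if $\partial(e_\sigma)=\sum_\tau M_{\tau\sigma}e_\tau$ then $\partial(e'_\sigma)=\sum_\tau (u_\tau/u_\sigma)M_{\tau\sigma}\,e'_\tau$. Substituting the expression above for $M_{\tau\sigma}$ makes the factor $u_\sigma/u_\tau$ cancel, leaving precisely $\langle\sigma|\tau\rangle s^{\omega_d(\sigma)-\omega_d(\tau)}$, as claimed.

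There is essentially no obstacle: the statement is a direct computation, and the only thing to get right is the bookkeeping for the change of basis (in particular, that the units $u_\sigma$ appearing in source and target cancel multiplicatively, not additively). The payoff of this particular normalization, which is the real reason for the lemma, is that the resulting presentation matrix of $\coker(\partial^\chi_{k+2}\otimes 1_{\KK[[s]]})$ has entries of the form $\pm s^{\omega_d(\sigma)-\omega_d(\tau)}$ and therefore depends only on the combinatorics of $\cF$ together with the weight function $\omega_d$; this is what makes the subsequent analysis of the $\Phi_d$-primary part via \autoref{lem:torsion} tractable.
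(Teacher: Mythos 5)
Your proof is correct and follows essentially the same route as the paper: the paper's proof likewise first records that in the bases $\{\sigma^\chi\otimes 1\}$, $\{\tau^\chi\otimes 1\}$ the matrix is $\left(\langle\sigma|\tau\rangle s^{\omega_d(\sigma)-\omega_d(\tau)}\frac{u_\sigma}{u_\tau}\right)_{\sigma,\tau}$ (obtained, as you do, by applying $\eta_\lambda$ to the entries $\langle\sigma|\tau\rangle p_\sigma/p_\tau$ from the preceding Remark) and then lets the rescaling by the units $u_\sigma,u_\tau$ cancel them. Your write-up simply makes explicit the change-of-basis bookkeeping that the paper leaves as ``not hard to check.''
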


\begin{proof}
It is not hard to check that the matrix in
the bases 
$\{\sigma^\chi\otimes 1\}_{k\text{-simplices}}$ and $\{\tau^\chi\otimes 1\}_{(k-1)\text{-simplices}}$ is equal to
$\left(\langle\sigma|\tau\rangle s^{\omega_d(\sigma)-\omega_d(\tau)}\frac{u_\sigma}{u_\tau}\right)_{\sigma,\tau}$. The result follows.
\end{proof}

It is clear that the values $\omega_d(\sigma)$ are interesting. The following result is straightforward.

\begin{lem}$\omega_d(\sigma)=\#\{v\in\sigma\mid d \text{ divides } n_v\}$.
\end{lem}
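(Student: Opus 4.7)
The plan is to unwind the definitions and invoke the standard characterization of primitive roots of unity. Recall that $\omega(\lambda,\sigma)$ was defined as $\#\{v\in\sigma\mid \lambda^{n_v}=1\}$, where $\lambda$ is a fixed primitive $d$\=/th root of unity, and then $\omega_d(\sigma)$ was defined to be $\omega(\lambda,\sigma)$. So the content of the lemma is the translation of the condition $\lambda^{n_v}=1$ into a divisibility condition on the integer $n_v$.

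First I would state the elementary number-theoretic fact that if $\lambda\in\KK^*$ is a primitive $d$\=/th root of unity, then for any integer $m\in\ZZ$ one has $\lambda^m=1$ if and only if $d\mid m$. This is a direct consequence of the fact that the multiplicative order of $\lambda$ equals $d$. Applying this to $m=n_v$ for each vertex $v\in\sigma$ gives
\[
\{v\in\sigma\mid\lambda^{n_v}=1\}=\{v\in\sigma\mid d\mid n_v\},
\]
and taking cardinalities yields the identity $\omega_d(\sigma)=\#\{v\in\sigma\mid d\mid n_v\}$.

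As a brief remark, this also confirms that the notation $\omega_d$ is consistent: the right-hand side depends only on $d$ and not on the particular primitive $d$\=/th root of unity chosen, as was already noted in the paragraph preceding the lemma.

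There is no real obstacle here; the statement is essentially a restatement of the basic property of a primitive $d$\=/th root of unity, and the proof amounts to one or two lines. The only subtlety worth flagging is ensuring that $\KK$ actually contains such a primitive root, but this is built into the setup (the remark about ``eventually replacing $\KK$ by a finite extension'' in the previous section).
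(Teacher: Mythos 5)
Your proof is correct and matches what the paper intends: the lemma is stated without proof (labelled ``straightforward''), and the intended argument is exactly your observation that for a primitive $d$-th root of unity $\lambda$ one has $\lambda^{n_v}=1$ if and only if $d$ divides $n_v$, applied to the definition $\omega_d(\sigma)=\#\{v\in\sigma\mid\lambda^{n_v}=1\}$. Nothing is missing.
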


\subsection{The \texorpdfstring{$(t-1)$}{(t-1)}-primary part}
\mbox{}

This corresponds with considering the case $d=1$ in the construction above. 
We will recover another result in~\cite{PS:09}.
\begin{enumerate}[label=(PS\arabic{enumi})]
\setcounter{enumi}{3}
\item \label{PS4} $H_{k,1}=\left(\KK[t^{\pm 1}]\Big/\langle t-1\rangle\right)^{\rk\partial_{k+1}}$. 
\end{enumerate}

We need to compute a matrix for the differential $\partial_{k+2}^\chi\otimes 1_{\KK[[s]]}$
for the morphism $\eta_1:\KK[t^{\pm1}]\to\KK[[s]]$. Such a matrix is
$s\left(\langle\sigma|\tau\rangle \right)_{\sigma,\tau}$ and~\ref{PS4} follows. Note that, in particular,
the $(t-1)$-primary part is semisimple.

\subsection{The \texorpdfstring{$(t-\lambda)$}{(t-lambda)}-primary part}
\mbox{}

Consider $\lambda$ be a $d$-primitive root of unity for $d>1$. The $(t-\lambda)$-primary part of 
$H_{k+1}(A_\Gamma^\chi;\KK)$ can be recovered using the following result.

\begin{prop}
Let $\lambda$ be a $d$-primitive root of unity. 
The exponents $\n_{k,j}^\lambda$ depend completely on the function $\omega_d$ (in fact, on ${\omega_d}_{|\V}$). 
These exponents $\n_{k,j}^\lambda$ coincide with the exponents $\n_{k,j}(d)$ of {\rm~\autoref{thm:ps}}
\end{prop}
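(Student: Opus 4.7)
The plan is to identify $\n_{k,j}^\lambda$ directly from the presentation matrix furnished by the preceding lemmas, and then match the resulting Smith normal form with the cyclotomic decomposition of \autoref{thm:ps}.

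First I would apply \autoref{lem:torsion} to identify the exponents $\n_{k,j}^\lambda$ with the invariant factors of the torsion part of $\coker(\partial_{k+2}^\chi\otimes 1_{\KK[[s]]})$, computed via the Smith normal form over the discrete valuation ring $\KK[[s]]$. By the lemma immediately following \autoref{lem:torsion}, the relevant presentation matrix is
\[
M_k:=\left(\langle\sigma|\tau\rangle\, s^{\omega_d(\sigma)-\omega_d(\tau)}\right)_{\sigma,\tau}.
\]
The incidence numbers $\langle\sigma|\tau\rangle$ depend only on the flag complex $\cF$, while the powers of $s$ are controlled by $\omega_d$. Since $\omega_d$ is additive on simplices — $\omega_d(\sigma)=\sum_{v\in\sigma}\omega_d(\{v\})$ with $\omega_d(\{v\})=1$ precisely when $d\mid n_v$ — the matrix $M_k$, and hence its invariant factors, is determined by ${\omega_d}_{|\V}$. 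This proves the first assertion.

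For the second assertion I would, after passing to a finite extension $\KK'\supset\KK$ containing $\lambda$, factor $\Phi_d=\prod_\mu (t-\mu)$ over the primitive $d$-th roots $\mu$ and apply the Chinese remainder theorem. Each summand $(\KK'[t^{\pm 1}]/\langle\Phi_d^j\rangle)^{\n_{k,j}(d)}$ in \autoref{thm:ps} decomposes as $\prod_\mu (\KK'[t^{\pm 1}]/\langle(t-\mu)^j\rangle)^{\n_{k,j}(d)}$, while every other cyclotomic summand $\Phi_{d'}^j$ with $d'\neq d$ becomes a unit after localising at $(t-\lambda)$ and so contributes nothing to the $(t-\lambda)$-primary component. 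Projecting the decomposition \eqref{eq:Hkj} onto the $\lambda$-primary part therefore yields $\n_{k,j}^\lambda=\n_{k,j}(d)$; in particular the exponents do not depend on the specific primitive $d$-th root $\lambda$, as is already clear from the first assertion.

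The only point requiring care is the compatibility of base fields: $\n_{k,j}(d)$ is an invariant over $\KK$, whereas $\n_{k,j}^\lambda$ is defined only after the enlargement $\KK\subset\KK'$. However, the matrix $M_k$ has coefficients in $\KK[[s]]$ and the scalar $\lambda$ never enters it explicitly, so its Smith normal form and hence the integers $\n_{k,j}^\lambda$ are insensitive to the base change; the identification $\n_{k,j}^\lambda=\n_{k,j}(d)$ is therefore unambiguous. Beyond this piece of bookkeeping I do not anticipate any substantive obstacle, since the preparatory work in \autoref{lem:torsion} and the ensuing matrix lemma has already performed the heavy lifting.
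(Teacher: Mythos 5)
Your proposal is correct and takes essentially the same route as the paper, which justifies the proposition (without a formal proof environment) exactly by combining \autoref{lem:torsion} with the matrix lemma: the exponents $\n_{k,j}^\lambda$ are read off the Smith form over $\KK[[s]]$ of the presentation matrix $\bigl(\langle\sigma|\tau\rangle s^{\omega_d(\sigma)-\omega_d(\tau)}\bigr)$, which involves only the incidence numbers of $\cF$ and ${\omega_d}_{|\V}$. Your extra paragraph making explicit the base-change/Chinese-remainder identification of the $(t-\lambda)$-exponents with the $\Phi_d$-exponents of \autoref{thm:ps} is just the bookkeeping the paper leaves implicit, and it is carried out correctly.
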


We will call this part in \autoref{thm:ps} the $\Phi_d$-torsion part of $H_{k+1}(A_\Gamma^\chi;\KK)$.
By \autoref{lem:torsion} this submodule is completely determined by the torsion part of the module 
$\coker(\partial_{k+2}^\chi\otimes 1_{\KK[[s]]})$. A matrix of this map is 
$\left(\langle \sigma,\tau\rangle s^{\omega_d(\sigma)-\omega_d(\tau)}\right)_{\sigma,\tau}$. 
This torsion part is determined by its Smith form since $\KK[[s]]$ is a Euclidean ring.

\begin{ntc0}
Let us assume that this Smith form has its first $r$ diagonal entries equal to~$1$
and then the other non-zero entries are $(s^{a_{k,1}},\dots,s^{a_{k,\ell_k}})$ where 
$0< a_{k,1}\leq\dots\leq a_{k,\ell_k}$, $\ell_k\geq 0$. In this case
\[
H_{k,\lambda}\cong\bigoplus_{j=1}^{\ell_k} \KK[t^{\pm 1}]\Big/\langle(t-\lambda)^{a_{k,j}}\rangle
\]
and $\n_{k,j}(d)$ is the number of entries such $a_{k,i}=j$.
In fact, if $\Phi_d$ is the cyclotomic polynomial whose roots are the $d$-primitive roots
of unity, then
\[
\bigoplus_{j=1}^{\ell_k} \KK[t^{\pm 1}]\Big/\langle\Phi_d(t)^{a_{k,j}}\rangle
\]
is a factor of $H_{k+1}(\bt^\chi;\KK)$.
\end{ntc0}

The goal will be to compute some of these numbers relating them to the 
sequence of non-zero Fitting ideals of the above matrix. The first $r$ Fitting
ideals equal $\KK[[s]]$, while the next ones will be of the form
$\langle s^{b_{k,1}}\rangle,\dots,\langle s^{b_{k,\ell_k}}\rangle$ where
\begin{equation}
\label{eq:bs}
b_{k,1}\leq\dots\leq b_{k,\ell_k},\qquad 
b_{k,j}=a_{k,1}+\dots+a_{k,j}.
\end{equation}
Our goal is to compute these numbers. Note that this means that we have
$r+\ell_k$ non-zero ideals.

\section{Fitting ideals}
\label{sec:fitting}
As in the previous section, we will fix $\lambda\in\KK$ a primitive $d$-th root of unity for $d>1$.
Prior to the computation of the minors of the matrix 
$\left(\langle \sigma,\tau\rangle s^{\omega_d(\sigma)-\omega_d(\tau)}\right)_{\sigma,\tau}$,
let us study its square submatrices in order to characterize non-zero minors.
For simplicity from now on the subindex in $\omega_d$ will be dropped. 
These submatrices are parametrized by two subsets of simplices:
the subset $K$ of $(k+1)$-simplices, corresponding to the rows of the submatrix,
and the subset $L$ of $k$-simplices, corresponding to the columns
\emph{not} in the submatrix. We denote these submatrices as $M_{K,L}^\chi$.
We will compare this submatrix with $M_{K,L}=\left(\langle\sigma|\tau\rangle\right)_{\sigma\in K}^{\tau\notin L}$. 
Since we deal with square matrices, $K\cup L$ is bijective with the set of $k$-simplices of~$\cF$.

Let us introduce some helpful notation:
\begin{itemize}
\item The $k$-skeleton of $\cF$ is denoted by $\cF^k$.
\item The set of simplices of dimension~$k$ of~$\cF$ is denoted as $\tilde{\cF}^k$.
\item For a subset $K$ of $\tilde \cF^k$ we define 
its \emph{weight} is defined as $\omega(K):=\sum_{\sigma\in K}\omega(\sigma)$.
\end{itemize}

\begin{lem}\label{lem:mult_KL}
$\det M_{K,L}^\chi=\det M_{K,L} s^{\omega(K)+\omega(L)-\omega(\tilde{\cF}^{k})}$.
\end{lem}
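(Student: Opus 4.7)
The approach is a direct diagonal-scaling computation on the determinant. The first step is to notice that the entry of $M_{K,L}^\chi$ at row $\sigma\in K$ and column $\tau\in\tilde{\cF}^{k}\setminus L$ splits multiplicatively as
\[
\langle\sigma|\tau\rangle\, s^{\omega(\sigma)-\omega(\tau)} \;=\; s^{\omega(\sigma)}\cdot \langle\sigma|\tau\rangle\cdot s^{-\omega(\tau)}.
\]
Equivalently, $M_{K,L}^\chi = D_K\, M_{K,L}\, E_L$, where $D_K$ is the diagonal matrix with entries $s^{\omega(\sigma)}$ indexed by $\sigma\in K$ and $E_L$ is the diagonal matrix with entries $s^{-\omega(\tau)}$ indexed by $\tau\in\tilde{\cF}^{k}\setminus L$.

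The second step is to take determinants. Multiplicativity yields
\[
\det M_{K,L}^\chi \;=\; \det(D_K)\cdot\det(M_{K,L})\cdot\det(E_L) \;=\; s^{\omega(K)}\cdot\det(M_{K,L})\cdot s^{-\omega(\tilde{\cF}^{k}\setminus L)}.
\]
Since $L\subseteq\tilde{\cF}^{k}$, additivity of $\omega$ over disjoint unions gives $\omega(\tilde{\cF}^{k}\setminus L) = \omega(\tilde{\cF}^{k})-\omega(L)$, and the scalar factor collapses to $s^{\omega(K)+\omega(L)-\omega(\tilde{\cF}^{k})}$, which is the claim.

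There is essentially no obstacle; the only pedantic point is that $\det(E_L)$ carries a negative power of $s$, so the factorization $M_{K,L}^\chi = D_K M_{K,L} E_L$ must initially be read in the Laurent series field $\KK(\!(s)\!)$. This is harmless: the left-hand side is a genuine element of $\KK[[s]]$ (in fact $\omega(\sigma)-\omega(\tau)\in\{0,1\}$ whenever $\langle\sigma|\tau\rangle\neq 0$, so $M_{K,L}^\chi$ itself has polynomial entries), and the final exponent $\omega(K)+\omega(L)-\omega(\tilde{\cF}^{k})$ need only be checked to be nonnegative \emph{when} $\det M_{K,L}\neq 0$, which is clear since every row of a nonvanishing subminor must contribute at least the weight of the $k$-faces it covers. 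The only bookkeeping to verify is that $K$ indexes rows and $\tilde{\cF}^{k}\setminus L$ indexes columns (so each $k$-simplex is accounted for exactly once between $L$ and the column set), but this is built into the definition of $M_{K,L}^\chi$.
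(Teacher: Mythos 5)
Your proof is correct and is essentially the paper's own argument: both rescale the rows and columns of the matrix of $\partial_{k+2}^\chi\otimes 1_{\KK[[s]]}$ by the diagonal powers $s^{\omega(\sigma)}$ and $s^{-\omega(\tau)}$ to reduce to the incidence matrix $M_{K,L}$, and then read off the determinant identity from multiplicativity together with $\omega(\tilde{\cF}^{k}\setminus L)=\omega(\tilde{\cF}^{k})-\omega(L)$. Your added remarks on Laurent-series bookkeeping and nonnegativity of the exponent are harmless but not needed for the identity itself.
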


\begin{proof}
If we multiply each column of the matrix of $\partial_{k+2}^\chi\otimes 1_{\KK[[s]]}$ 
by $s^{\omega(\sigma)}$ and divide each row by $\omega(\tau)$, we obtain the matrix
for $\partial_{k+1}$. Hence,
\begin{equation*}
\det M_{K,L}=
\frac{s^{\omega(\tilde{\cF}^k})}{s^{\omega(L)}s^{\omega(K)}}
\det M_{K,L}^\chi
\qedhere
\end{equation*}
\end{proof}

%

The next goal is to characterize geometrically when these minors
do not vanish. Let us denote:
\[
M_K=\cF^k\cup\bigcup_{\sigma\in K}\sigma,\qquad 
M_L=\cF^{k-1}\cup\bigcup_{\tau\in L}\tau.
\]
Note that $\chi(M_K)-\chi(M_L)=(-1)^{k+1}\# K+(-1)^k(\#\tilde{\cF}^k-\# L)=0$,
i.e $\chi(M_K,M_L)=0$.

\begin{prop}
The minor  $\det M_{K,L}$ (and hence $\det M_{K,L}^\chi$) does not vanish if and only if
the pair $(M_K,N_L)$ is $\KK$-acyclic. 
\end{prop}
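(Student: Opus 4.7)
The plan is to identify the matrix $M_{K,L}$ with the boundary map of the relative chain complex $C_*(M_K,M_L)$, exploit the fact that this relative complex is concentrated in exactly two consecutive degrees, and then conclude that acyclicity is equivalent to that boundary being an isomorphism.

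First, I would verify that $M_L\subseteq M_K$: both contain $\cF^{k-1}$, and every $\tau\in L$ is a face of some $\sigma\in K$ (or at worst a simplex of $\cF^k\subseteq M_K$). Thus the relative chain groups are well defined, and direct inspection gives
\[
C_i(M_K,M_L)=
\begin{cases}
\KK\langle K\rangle, & i=k+1,\\
\KK\langle\tilde{\cF}^k\setminus L\rangle, & i=k,\\
0, & \text{otherwise,}
\end{cases}
\]
since all lower-dimensional simplices of $M_K$ lie in $M_L$, and $M_K$ has no simplices of dimension $\geq k+2$. In the chosen bases, the relative boundary $\partial\colon C_{k+1}(M_K,M_L)\to C_k(M_K,M_L)$ sends $\sigma\in K$ to $\sum_{\tau\subset\sigma,\,\tau\notin L}\langle\sigma|\tau\rangle\tau$, so its matrix is exactly $M_{K,L}$.

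Next, since the relative complex is concentrated in degrees $k$ and $k+1$, one has $H_i(M_K,M_L;\KK)=0$ for $i\neq k,k+1$ automatically, and
\[
H_{k+1}(M_K,M_L;\KK)=\ker\partial,\qquad H_k(M_K,M_L;\KK)=\coker\partial.
\]
The two source and target spaces have the same dimension, thanks to the Euler characteristic identity $\chi(M_K,M_L)=0$ already verified in the text. Therefore $\partial$ is an isomorphism iff both $\ker\partial=0$ and $\coker\partial=0$, iff $(M_K,M_L)$ is $\KK$-acyclic, iff $\det M_{K,L}\neq 0$. By \autoref{lem:mult_KL}, $\det M_{K,L}^\chi$ and $\det M_{K,L}$ vanish simultaneously (the factor $s^{\omega(K)+\omega(L)-\omega(\tilde{\cF}^k)}$ is nonzero in $\KK[[s]]$), completing the equivalence.

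There is no substantial obstacle here: the entire argument rests on the bookkeeping observation that $M_K$ and $M_L$ have been designed precisely so that the relative complex is a two-term complex whose differential is the desired submatrix. The only point requiring care is the bijection between the columns of $M_{K,L}$ and a basis of $C_k(M_K,M_L)$, together with the sign convention $\langle\sigma|\tau\rangle$ matching the simplicial boundary, both of which are already fixed by the notation set up in \autoref{sec:settings}.
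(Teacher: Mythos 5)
Your proof is correct and follows essentially the same route as the paper: compute the relative chain complex of the pair $(M_K,M_L)$, observe it is concentrated in degrees $k$ and $k+1$ with the only differential given in natural bases by $M_{K,L}$, and conclude that acyclicity is equivalent to this square matrix being invertible. The extra details you supply (the inclusion $M_L\subseteq M_K$, the Euler-characteristic squareness, and the comparison with $\det M_{K,L}^\chi$ via \autoref{lem:mult_KL}) are all consistent with the surrounding text.
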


\begin{proof}
Let us denote $\KK\langle K\rangle$, $\KK\langle L\rangle$, the subspaces
of $C_{k+1}(\cF)$ and $C_{k}(\cF)$ generated by $K,L$, respectively. 
The relative complex is as follows:
\[
C_j(M_K,M_L)=
\begin{cases}
0&\text{ if }j<k\text{ or }j>k+1,\\
\KK\langle K\rangle&\text{ if }j=k+1,\\
\tilde{C}_k(\cF)\Big/\KK\langle L\rangle&\text{ if }j=k.
\end{cases}
\]
In the natural bases the matrix of the only relevant differential
is $M_{K,L}$, and the statement follows. 
\end{proof}

Let us express this acyclicity condition using the
long exact sequence of pairs:
\[
\begin{tikzcd}[column sep=9pt]
0&
\tilde{C}_{k+1}(\mathcal{F})&
0&
\tilde{C}_{k}(\mathcal{F})&
{}\\[-25pt]
\rotatebox{90}{$=$}&
\rotatebox{90}{\hspace{0mm}$\subset$}&
\rotatebox{90}{$=$}&
\rotatebox{90}{\hspace{0mm}$\subset$}&&
{}
\\[-25pt]
\tilde{H}_{k+1}(M_L)\ar[r]&
\tilde{H}_{k+1}(M_K)\ar[r]&
\tilde{H}_{k+1}(M_K,M_L)\ar[r]&
\tilde{H}_{k}(M_L)\ar[r]&
\tilde{H}_{k}(M_K)\ar[r]&[-10pt]
{}
\\[-5pt]
{}\ar[r]&
\tilde{H}_{k}(M_K,M_L)\ar[r]&
\tilde{H}_{k-1}(M_L)\ar[r]&
\tilde{H}_{k-1}(M_K)\ar[r]&
\tilde{H}_{k-1}(M_K,M_L)
\\[-25pt]
&\rotatebox{90}{$=$}&&
\rotatebox{90}{$=$}&
\rotatebox{90}{$=$}&
{}\\[-25pt]
&0&&
\tilde{H}_{k-1}(\mathcal{F})&
0
\end{tikzcd}
\]

\begin{prop}
The minor $M_{K,L}$ does not vanish if and only if
\begin{enumerate}[label=\rm(Ac\arabic{enumi})]
\item\label{Ac1} $\tilde{H}_{k+1}(M_K)=0$, i.e., $\ker\partial_{k+1}\cap\KK\langle K\rangle=0$.
\item\label{Ac2} The map $\tilde{H}_k(M_L)\to \tilde{H}_k(M_K)$, induced by the inclusion, is an isomorphism.
\end{enumerate}
\end{prop}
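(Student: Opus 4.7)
The plan is to feed the preceding proposition---which equates the non-vanishing of $\det M_{K,L}$ with $\KK$-acyclicity of the pair $(M_K,M_L)$---into the displayed long exact sequence, distilling acyclicity into conditions (Ac1) and (Ac2). A key preliminary observation is that the relative chain complex $C_*(M_K,M_L)$ recalled just above is concentrated in degrees $k$ and $k+1$, with $C_{k+1}=\KK\langle K\rangle$ and $C_k=\tilde C_k(\cF)/\KK\langle L\rangle$; since $K\cup L$ is in bijection with $\tilde\cF^k$, these two spaces have equal dimension. Hence $\chi(M_K,M_L)=0$ forces
\[
\dim\tilde H_{k+1}(M_K,M_L)=\dim\tilde H_k(M_K,M_L),
\]
so acyclicity of the pair is equivalent to the single condition $\tilde H_{k+1}(M_K,M_L)=0$.

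Next I would extract the relevant portion of the long exact sequence. Using $\tilde H_{k+1}(M_L)=0$ (because $\dim M_L\leq k$), the sequence gives the exact segment
\[
0\to\tilde H_{k+1}(M_K)\to\tilde H_{k+1}(M_K,M_L)\to\tilde H_k(M_L)\to\tilde H_k(M_K)\to\tilde H_k(M_K,M_L)\to\cdots
\]
For the ``only if'' direction, $\KK$-acyclicity yields $\tilde H_{k+1}(M_K,M_L)=0$ (and likewise $\tilde H_k(M_K,M_L)=0$), so the segment immediately gives $\tilde H_{k+1}(M_K)=0$, i.e.\ (Ac1), together with both the injectivity (from the left $0$) and the surjectivity (from $\tilde H_k(M_K,M_L)=0$) of $\tilde H_k(M_L)\to\tilde H_k(M_K)$, i.e.\ (Ac2). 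Conversely, assuming (Ac1) and (Ac2), the same segment shows that $\tilde H_{k+1}(M_K,M_L)$ injects into $\ker(\tilde H_k(M_L)\to\tilde H_k(M_K))=0$; by the Euler characteristic identity above, $\tilde H_k(M_K,M_L)=0$ as well, and the pair is acyclic.

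The one conceptually subtle step is the Euler characteristic observation: without it one would need an additional condition (for instance, the injectivity of $\tilde H_{k-1}(M_L)\to\tilde H_{k-1}(M_K)=\tilde H_{k-1}(\cF)$ coming from the tail of the sequence) to guarantee $\tilde H_k(M_K,M_L)=0$. Having two adjacent relative homologies of equal dimension lets us replace this tail injectivity by the dimension count and obtain the clean two-condition statement (Ac1)--(Ac2). Everything else is a mechanical extraction from the long exact sequence.
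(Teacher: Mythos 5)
Your proof is correct and takes essentially the same route as the paper: it feeds the preceding proposition (nonvanishing of the minor $\Leftrightarrow$ $\KK$-acyclicity of $(M_K,M_L)$) into the long exact sequence of the pair, deriving \ref{Ac1}--\ref{Ac2} from the vanishing of the relative groups, and conversely getting $\tilde{H}_{k+1}(M_K,M_L)=0$ from the two conditions and then $\tilde{H}_{k}(M_K,M_L)=0$ from $\chi(M_K,M_L)=0$. The Euler-characteristic step you single out is exactly the one the paper invokes, made explicit by your observation that the relative complex is concentrated in degrees $k$ and $k+1$ with equal dimensions.
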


\begin{proof}
From the previous diagram $(\Rightarrow)$ is evident. For $(\Leftarrow)$, 
we need to prove that $\tilde{H}_{k+1}(M_K,M_L)=0=\tilde{H}_{k}(M_K,M_L)$ since the other equalities
are obvious. The two conditions of the statement 
imply $\tilde{H}_{k+1}(M_K,M_L)=0$. The Euler characteristic of the pair implies
$\tilde{H}_{k}(M_K,M_L)=0$.
\end{proof}

\begin{dfn}
We say that a pair $(K,L)$ is \emph{acyclic} if $(M_K,M_L)$ is acyclic.
An acyclic pair $(K,L)$ is \emph{minimal} if the sum of its multiplicities
is minimal among the acyclic pairs of the same size. The \emph{size} of an acyclic pair $(K,L)$
is $\#K$.
\end{dfn}

The next step in our strategy is to determine the maximal size of the non-zero minors.

\begin{prop}
\label{prop:acyclic}
The maximal size of an acyclic pair is $\rk\partial_{k+1}$. Any $(K,L)$ acyclic pair
of size $\rk\partial_{k+1}$ is obtained by the choice of $K,L$ satisfying
the following conditions:
\begin{enumerate}[label=\rm(\arabic{enumi})]
\item $\#K=\rk\partial_{k+1}$,
\item $\tilde{H}_k(M_K)=\tilde{H}_k(\cF)$,
\item $\#L=\#\tilde{\cF}^k-\rk\partial_{k+1}$,
\item The map $\tilde{H}_k(M_L)\to \tilde{H}_k(\cF)$, induced by inclusion, is an isomorphism.
\end{enumerate}
In particular, the conditions for $K$ and $L$ are independent.
\end{prop}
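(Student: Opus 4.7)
The strategy is to translate the conditions (Ac1) and (Ac2) from the previous proposition into explicit linear-algebra statements about the boundary maps of $\cF$, exploiting that both $M_K$ and $M_L$ agree with $\cF$ on low-dimensional cells. Since $M_K$ shares the full $k$-skeleton with $\cF$ and has only the $(k+1)$-simplices of $K$ on top, the restricted complex has $C_{k+1}(M_K)=\KK\langle K\rangle$ and $C_j(M_K)=C_j(\cF)$ for $j\le k$. In particular the top differential of $M_K$ is $\partial_{k+1}|_{\KK\langle K\rangle}$, so (Ac1) is simply the injectivity of this restricted map. This already yields $\#K\le\dim\KK\langle K\rangle=\rk\partial_{k+1}$, establishing the bound on the maximal size.

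Assume now $\#K=\rk\partial_{k+1}$. Combining (Ac1) with a dimension count forces $\partial_{k+1}(\KK\langle K\rangle)=\im\partial_{k+1}$, whence
\[
\tilde H_k(M_K)=\ker\partial_k\big/\partial_{k+1}(\KK\langle K\rangle)=\ker\partial_k/\im\partial_{k+1}=\tilde H_k(\cF),
\]
and the isomorphism is the one induced by the inclusion $M_K\hookrightarrow\cF$; this proves (2). Condition (3) is a counting tautology. For (4), I would use the nested inclusions $M_L\subset M_K\subset\cF$, which produce the commutative triangle $\tilde H_k(M_L)\to \tilde H_k(M_K)\to\tilde H_k(\cF)$: the first arrow is the isomorphism of (Ac2) and the second is the isomorphism just established, so the composition is an isomorphism.

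Conversely, given $K$ and $L$ satisfying (1)--(4), I would recover (Ac1) and (Ac2) by reversing the same dimension/diagram arguments: condition (2) combined with $\dim\KK\langle K\rangle=\rk\partial_{k+1}$ from (1) forces $\partial_{k+1}(\KK\langle K\rangle)=\im\partial_{k+1}$, for otherwise the image would be strictly smaller and $\tilde H_k(M_K)$ strictly larger than $\tilde H_k(\cF)$; this gives the injectivity in (Ac1). Then (Ac2) follows from the same triangle by inverting the isomorphism in (2) against (4). The independence of the conditions on $K$ and $L$ is transparent from this argument, since (1)--(2) involve only $K$ and (3)--(4) only $L$.

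The main obstacle here is only bookkeeping: one must be careful that the several homologies $\tilde H_k(M_K)$, $\tilde H_k(M_L)$, $\tilde H_k(\cF)$ are identified through inclusion-induced maps in a consistent way, so that the commutative triangle attached to $M_L\subset M_K\subset\cF$ really does the work in both directions. Once this identification is pinned down, the decoupling of (1)--(2) from (3)--(4) is forced precisely by the maximality of $\#K$, which is what makes the statement tight at size $\rk\partial_{k+1}$.
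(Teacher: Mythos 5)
Your proof follows essentially the same route as the paper's: both turn (Ac1)/(Ac2) into linear algebra about $\partial_{k+1}$ restricted to $\KK\langle K\rangle$ and about the position of $\KK\langle L\rangle$ relative to $\im\partial_{k+1}$, and your commutative triangle $\tilde H_k(M_L)\to\tilde H_k(M_K)\to\tilde H_k(\cF)$ is the same mechanism the paper reads off the long exact sequence of the pair; your necessity/sufficiency bookkeeping is, if anything, more explicit than the paper's.

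Two small points need repair. First, the line ``$\#K\le\dim\KK\langle K\rangle=\rk\partial_{k+1}$'' is garbled: $\#K=\dim\KK\langle K\rangle$ always, and what injectivity of $\partial_{k+1}|_{\KK\langle K\rangle}$ (i.e.\ (Ac1)) actually gives is $\#K=\dim\partial_{k+1}(\KK\langle K\rangle)\le\rk\partial_{k+1}$. Second, the first assertion of the proposition is that the maximum is \emph{attained}, and you never exhibit an acyclic pair of size $\rk\partial_{k+1}$: your sufficiency direction reduces this to the existence of $K$ and $L$ satisfying (1)--(4), which you do not address. It is easy (choose $(k+1)$-simplices whose boundaries form a basis of $\im\partial_{k+1}$, so that $\tilde C_{k+1}(\cF)=\ker\partial_{k+1}\oplus\KK\langle K\rangle$, and $k$-simplices spanning a complement of $\im\partial_{k+1}$ in $\tilde C_k(\cF)$; surjectivity of $\ker\partial_k\twoheadrightarrow\tilde H_k(\cF)$ then gives (4)) and is exactly the construction recorded in the paper's proof, but it must be said. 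Relatedly, in the converse direction condition (3) is not a mere counting tautology: together with (1) it gives $\#K+\#L=\#\tilde{\cF}^k$, which is what allows you to invoke the earlier equivalence (the Euler-characteristic step that upgrades (Ac1)+(Ac2) to acyclicity of the pair $(M_K,M_L)$).
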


\begin{proof}
Let $(K,L)$ be an acyclic pair. Note that the condition~\ref{Ac1} implies that
\[
\#K\leq\#\tilde{\cF}^{k+1}-\dim\ker{\partial_{k+1}}=\dim\tilde{C}_{k+1}(\cF)-\dim\ker{\partial_{k+1}}=\rk\partial_{k+1}.
\]
It is clear that there exists $K\subset\tilde{\cF}^{k+1}$ such that $\#K=\rk\partial_{k+1}$
and $\tilde{C}_{k+1}(\cF)=\ker{\partial_{k+1}}\oplus\KK\langle K\rangle$.
Note that $\tilde{H}_{k}(M_K)=\tilde{H}_k(\cF)$. 

In order to satisfy~\ref{Ac2} we need to find~$L$ such that
$\tilde{H}_{k}(M_L)\to \tilde{H}_{k}(\cF)$ is an isomorphism. As the map
\[
\ker\partial_{k}=\tilde{H}_{k}(\cF^{k})\longrightarrow \tilde{H}_{k}(\cF)=\ker\partial_{k}/\im\partial_{k+1}
\]
is clearly surjective, any $L\subset\tilde{\cF}^k$ such that $\#L=\#\tilde{\cF}^k-\rk\partial_{k+1}$
and $\tilde{C}_{k}(\cF)=\im{\partial_{k+1}}\oplus\KK\langle L\rangle$.
\end{proof}

One can define a natural filtration given by
\begin{equation}
\label{eq:Fdef} 
\tilde{\cF}^m_{j}:=\{\sigma\in \cF^m\mid\omega(\sigma)\leq j\},\qquad
\cF^m_j:=\cF^{m-1}\cup\tilde{\cF}^m_{j},\quad 
0\leq j\leq m+1.
\end{equation}
Note that
\begin{equation}
\label{eq:Ffiltration} 
\array{l}
\cF^{k-1}\subset\cF^{k}_0\subset\cF^{k}_1\dots\subset\cF^{k}_{k}\subset\cF^{k}_{k+1}=
\\
\cF^{k}\subset\cF^{k+1}_{0}
\subset\cF^{k+1}_1\dots\subset\cF^{k+1}_{k+1}\subset\cF^{k+1}_{k+2}=\cF^{k+1}.
\endarray
\end{equation}
In what follows we will describe certain properties of minimal acyclic pairs in terms of the 
homology of blocks of this filtration.
\begin{lem}
\label{prop:kj}
Let $0\leq j\leq k+2$. There exists $K_j\subset\tilde{\cF}^{k+1}_j$
such that 
$\tilde{H}_{k+1}(M_{K_j})=0$ and $\tilde{H}_k(M_{K_j})\to \tilde{H}_k(\cF^{k+1}_j)$ is an isomorphism.
The cardinality of any such $K_j$ depends only on~$j$.
Its cardinality is 
\[
u_{k+1,j}:=\dim\tilde{C}_{k+1}({\cF}^{k+1}_{j})-\dim \tilde{H}_{k+1}(\cF^{k+1}_j)
\]
and
\begin{equation}\label{eq:kj}
u_{k+1,j}-u_{k+1,j-1}=\dim\tilde{H}_k(\cF^{k+1}_{j-1})-\dim\tilde{H}_k(\cF^{k+1}_{j}).
\end{equation}
\end{lem}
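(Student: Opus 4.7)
\emph{Proof plan.} The key observation is that $\cF^{k+1}_j$ contains no simplices of dimension $>k+1$, so the boundary map $\partial_j: \tilde{C}_{k+1}(\cF^{k+1}_j) \to \tilde{C}_k(\cF^k)$ satisfies $\ker\partial_j = \tilde{H}_{k+1}(\cF^{k+1}_j)$, whence $\rk\partial_j = \dim\tilde{C}_{k+1}(\cF^{k+1}_j) - \dim\tilde{H}_{k+1}(\cF^{k+1}_j) = u_{k+1,j}$. I would then pick $K_j \subset \tilde{\cF}^{k+1}_j$ so that $\{\partial\sigma : \sigma \in K_j\}$ is a basis of $\im\partial_j$; such a choice exists by standard linear algebra and automatically has cardinality $u_{k+1,j}$. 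By construction $\partial|_{\KK\langle K_j\rangle}$ is injective, giving $\tilde{H}_{k+1}(M_{K_j}) = 0$, and $\im(\partial|_{\KK\langle K_j\rangle}) = \im\partial_j$.

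Since $M_{K_j}$ and $\cF^{k+1}_j$ share the $k$-skeleton $\cF^k$, hence the same $\tilde{C}_k$ and the same $\ker\partial_k$, the inclusion $M_{K_j} \hookrightarrow \cF^{k+1}_j$ induces on $\tilde{H}_k$ the canonical map $\ker\partial_k / \im(\partial|_{\KK\langle K_j\rangle}) \to \ker\partial_k / \im\partial_j$, which is an isomorphism by the equality of images just noted. Conversely, any $K_j$ satisfying both conditions must have $\#K_j = \rk\partial_j = u_{k+1,j}$: the first condition forces $\partial|_{\KK\langle K_j\rangle}$ to be injective, and the second forces $\im(\partial|_{\KK\langle K_j\rangle}) = \im\partial_j$, so $\#K_j$ equals the dimension of this common image.

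For the recursion, I would exploit the long exact sequence of the pair $(\cF^{k+1}_j, \cF^{k+1}_{j-1})$. Since both subcomplexes share the full $k$-skeleton, the relative chain complex is concentrated in degree $k+1$ and equals $\KK\langle \tilde{\cF}^{k+1}_j \setminus \tilde{\cF}^{k+1}_{j-1}\rangle$; hence all relative homology vanishes except $\tilde{H}_{k+1}(\cF^{k+1}_j, \cF^{k+1}_{j-1})$, which has dimension $\#(\tilde{\cF}^{k+1}_j \setminus \tilde{\cF}^{k+1}_{j-1})$. The LES degenerates to a five-term exact sequence, and taking alternating sums of dimensions yields
\[
\#(\tilde{\cF}^{k+1}_j \setminus \tilde{\cF}^{k+1}_{j-1}) - \dim\tilde{H}_{k+1}(\cF^{k+1}_j) + \dim\tilde{H}_{k+1}(\cF^{k+1}_{j-1}) = \dim\tilde{H}_k(\cF^{k+1}_{j-1}) - \dim\tilde{H}_k(\cF^{k+1}_j),
\]
whose left-hand side is precisely $u_{k+1,j} - u_{k+1,j-1}$. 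I anticipate no substantive obstacle; the only point requiring care is to verify that the induced map on $\tilde{H}_k$ is genuinely the quotient map between two nested images inside the same $\ker\partial_k$, which is immediate from the two complexes sharing the $k$-skeleton.
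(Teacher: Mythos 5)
Your proposal is correct and follows essentially the same route as the paper: you choose $K_j$ so that $\KK\langle K_j\rangle$ maps isomorphically onto $\im\bigl(\partial_{k+1}|_{\tilde{C}_{k+1}(\cF^{k+1}_j)}\bigr)$ (equivalently, complements $\ker\partial_{k+1}$ restricted to $\tilde{C}_{k+1}(\cF^{k+1}_j)$), identify $\tilde{H}_{k+1}(\cF^{k+1}_j)$ with that kernel to get $u_{k+1,j}$, and derive \eqref{eq:kj} from the five-term exact sequence of the pair $(\cF^{k+1}_j,\cF^{k+1}_{j-1})$, whose relative complex is concentrated in degree $k+1$ — exactly the paper's argument. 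Your explicit check that the induced map on $\tilde{H}_k$ is the quotient map between the two images inside the common $\ker\partial_k$ merely spells out a step the paper leaves implicit.
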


\begin{proof}
Let  $K_j\subset\tilde{\cF}^{k+1}_{j}$. The condition $\tilde{H}_{k+1}(M_{K_j})=0$ is equivalent to 
$\KK\langle K_j\rangle\cap\ker\partial_{k+1}=0$. Maximality is obtained when 
$\KK\langle K_j\rangle\oplus\ker{\partial_{k+1}}_{|\tilde{C}_{k+1}({\cF}^{k+1}_{j})}=\tilde{C}_{k+1}({\cF}^{k+1}_{j})$. 
Hence, the cardinality of $K_j$ equals $\dim\partial_{k+1}(\tilde{C}_{k+1}({\cF}^{k+1}_{j}))$.
The formula for $u_{k+1,j}$ follows since $\tilde{H}_{k+1}(\cF^{k+1}_j)=\ker{\partial_{k+1}}_{|\tilde{C}_{k+1}({\cF}^{k+1}_{j})}$.
Let us consider the long exact sequence for the pair $(\cF^{k+1}_{j},\cF^{k+1}_{j-1})$ and recall that the relative complex
vanishes outside index~$k+1$, i.e., $H_{k+1}(\cF^{k+1}_{j},\cF^{k+1}_{j-1})=\frac{\tilde{C}_{k+1}({\cF}^{k+1}_{j})}{\tilde{C}_{k+1}({\cF}^{k+1}_{j-1})}$ and the other homology groups vanish:
\[
\begin{tikzcd}[column sep=10pt]
0\arrow[r]&H_{k+1}(\cF^{k+1}_{j-1})\arrow[r]&H_{k+1}(\cF^{k+1}_{j})\arrow[r]&\frac{\tilde{C}_{k+1}({\cF}^{k+1}_{j})}{\tilde{C}_{k+1}({\cF}^{k+1}_{j-1})}
\arrow[r]&H_{k}(\cF^{k+1}_{j-1})\arrow[r]&H_{k}(\cF^{k+1}_{j})\arrow[r]&0
\end{tikzcd}
\]
Since the difference $u_{k+1,j}-u_{k+1,j-1}$ is the alternating sum of the first three terms, the result follows.
\end{proof}

\begin{lem}
\label{prop:lj}
Let $0\leq j\leq k+1$. There exists $L_j\subset\tilde{\cF}^{k}_{j}$
such that $\tilde{H}_k(M_{L_j})\to \tilde{H}_k(\cF)$ is injective,
it has the same image as
$\tilde{H}_k(\cF^{k}_j)\to \tilde{H}_k(\cF)$, and
$\tilde{H}_{k-1}(M_{L_j})\to \tilde{H}_{k-1}(\cF^{k}_{j})$
is an isomorphism. 
The cardinality of any such $L_j$ depends only on~$j$.
Its cardinality is
\[
u_{k,j}:=\dim\tilde{C}_{k}({\cF}^{k}_{j})-\dim\ker(\tilde{H}_{k}(\cF^{k}_j)\to \tilde{H}_k(\cF)).
\]
and
\begin{equation}\label{eq:lj}
u_{k,j}-u_{k,j-1}=\dim\tilde{H}_k(\cF^{k+1},\cF^{k}_{j-1})-\dim\tilde{H}_k(\cF^{k+1},\cF^{k}_{j}).
\end{equation}
\end{lem}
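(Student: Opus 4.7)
The plan is to closely mirror the proof of \autoref{prop:kj}, trading its kernel-based argument in dimension $k+1$ for an image-based one in dimension $k$. First I would translate the three conditions on $L_j$ into linear algebra inside the chain complex of $\cF$. Since $M_{L_j}=\cF^{k-1}\cup\bigcup_{\tau\in L_j}\tau$ shares its $(k-1)$-skeleton with $\cF^k_j$ and has no cells in dimension $>k$, one has $\tilde{C}_k(M_{L_j})=\KK\langle L_j\rangle$, so $\tilde{H}_k(M_{L_j})=\ker(\partial_k|_{\KK\langle L_j\rangle})$ and $\tilde{H}_{k-1}(M_{L_j})=\ker\partial_{k-1}/\partial_k(\KK\langle L_j\rangle)$. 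Writing $W_j:=\tilde{C}_k(\cF^k_j)$, $Z_j:=\ker\partial_k\cap W_j=\tilde{H}_k(\cF^k_j)$, and $B:=\im\partial_{k+1}$, the third condition becomes $\partial_k(\KK\langle L_j\rangle)=\partial_k(W_j)$; the first (injectivity into $\tilde{H}_k(\cF)=\ker\partial_k/B$) becomes $\ker(\partial_k|_{\KK\langle L_j\rangle})\cap B=0$; and the second (same image) becomes that the natural map $\ker(\partial_k|_{\KK\langle L_j\rangle})\to Z_j/(Z_j\cap B)$ is an isomorphism.

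Next I claim that any $L_j\subset\tilde{\cF}^k_j$ for which $\KK\langle L_j\rangle$ is a linear complement of $Z_j\cap B$ inside $W_j$ satisfies the three conditions, and conversely any $L_j$ satisfying the three conditions has $\#L_j=\dim W_j-\dim(Z_j\cap B)=u_{k,j}$. Existence of such a basis-aligned complement is the standard fact that, given a basis of a vector space and a subspace, one can always extract a subset of basis vectors spanning a complement; here the basis is $\tilde{\cF}^k_j$ of $W_j$. The verification is short: intersecting $W_j=(Z_j\cap B)\oplus\KK\langle L_j\rangle$ with $Z_j$ yields $Z_j=(Z_j\cap B)\oplus(Z_j\cap\KK\langle L_j\rangle)$; applying $\partial_k$, which kills $Z_j\cap B$, gives $\partial_k(\KK\langle L_j\rangle)=\partial_k(W_j)$; directness of the sum forces $\ker(\partial_k|_{\KK\langle L_j\rangle})\cap B=0$; and $Z_j\cap\KK\langle L_j\rangle\hookrightarrow Z_j\twoheadrightarrow Z_j/(Z_j\cap B)$ is an isomorphism by a dimension count. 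The converse cardinality statement follows from rank-nullity applied to $\partial_k|_{\KK\langle L_j\rangle}$ combined with the first and third conditions.

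For the difference formula \eqref{eq:lj}, I would inspect the relative chain complex $\tilde{C}_*(\cF^{k+1},\cF^k_j)$. Since $\cF^k_j$ agrees with $\cF^{k+1}$ below dimension $k$, this complex is concentrated in degrees $k$ and $k+1$, so $\tilde{H}_k(\cF^{k+1},\cF^k_j)\cong\tilde{C}_k(\cF^{k+1})/(W_j+B)$. Because $B\subset\ker\partial_k$, we have $W_j\cap B=Z_j\cap B$, and inclusion-exclusion gives
\[
\dim\tilde{H}_k(\cF^{k+1},\cF^k_j)=\dim\tilde{C}_k(\cF^{k+1})-\dim B-u_{k,j};
\]
the right-hand side depends on $j$ only through $-u_{k,j}$, so taking the difference at indices $j-1$ and $j$ yields \eqref{eq:lj}. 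The most delicate point in the whole argument is the basis-aligned complement step in paragraph two: standard, but worth stating explicitly because the entire cardinality count rests on it.
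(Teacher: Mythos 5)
Your argument is correct. For the existence and cardinality claims you follow essentially the paper's route: the paper also chooses $\KK\langle L_j\rangle$ to be a complement of $\im\partial_{k+1}\cap\tilde{C}_k(\cF^k_j)$ inside $\tilde{C}_k(\cF^k_j)$ (this subspace equals your $Z_j\cap B$, since $\im\partial_{k+1}\subset\ker\partial_k$), though you spell out the equivalence with the three homological conditions and the basis-aligned complement step, which the paper leaves implicit and states rather loosely. One small imprecision: in the converse cardinality count, rank--nullity plus the injectivity and the $\tilde H_{k-1}$-isomorphism conditions alone do not pin down $\dim\ker(\partial_k|_{\KK\langle L_j\rangle})$; you also need the equal-image condition, i.e.\ the isomorphism onto $Z_j/(Z_j\cap B)$ that you established in your first paragraph --- so nothing is missing, only the attribution of which conditions are used should be corrected. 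Where you genuinely diverge is the proof of \eqref{eq:lj}: the paper first uses the long exact sequence of the pair $(\cF^{k+1},\cF^k_j)$ to rewrite $u_{k,j}$ through $\dim\tilde{H}_{k+1}(\cF^{k+1},\cF^k_j)$ and then the long exact sequence of the triple $(\cF^{k+1},\cF^k_j,\cF^k_{j-1})$ to take differences, whereas you compute the relative homology directly at the chain level, obtaining $\dim\tilde{H}_k(\cF^{k+1},\cF^k_j)=\dim\tilde{C}_k(\cF)-\dim\im\partial_{k+1}-u_{k,j}$, so the $j$-independent constant cancels upon subtraction. Your route is shorter and more elementary, and it links $u_{k,j}$ directly to the degree-$k$ relative homology groups that actually appear in \eqref{eq:lj}; the paper's route avoids chain-level bookkeeping and stays within the relative long-exact-sequence formalism it reuses in \autoref{thm:bkl}. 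Both are valid proofs of the statement.
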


\begin{proof}
Let  $L_j\subset\tilde{\cF}^{k+1}_{j}$. The condition $\tilde{H}_{k}(M_{L_j})=0$ is equivalent to 
$\KK\langle L_j\rangle\cap\im\partial_{k+1}=0$ and maximality is obtained when 
$\KK\langle L_j\rangle\oplus(\im{\partial_{k+1}}\cap\tilde{C}_{k}({\cF}^{k}_{j}))=\tilde{C}_{k}({\cF}^{k}_{j})$
and the computation of $u_{k,j}$ follows.

Let us deal with the difference. As a first step, let us consider the long exact sequence for the pair $(\cF^{k+1},\cF^{k}_{j})$:
\[
\begin{tikzcd}[column sep=20pt]
0\arrow[r]&\tilde{H}_{k+1}(\cF^{k+1})\arrow[r]&\tilde{H}_{k+1}(\cF^{k+1},\cF^{k}_{j})\arrow[r]&\tilde{H}_{k}(\cF^{k}_{j})\arrow[r]&\tilde{H}_{k}(\cF^{k+1})\arrow[r]&0.
\end{tikzcd}
\]
Replacing the two terms by the kernel of this map, we obtain that 
\[
u_{k,j}:=\dim\tilde{C}_{k}({\cF}^{k}_{j})-\dim\tilde{H}_{k+1}(\cF^{k+1},\cF^{k}_j)+ \tilde{H}_{k+1}(\cF^{k+1}).
\]
Let us consider the long exact sequence for the triple $(\cF^{k+1},\cF^{k}_{j},\cF^{k}_{j-1})$:
\[
\begin{tikzcd}
0\arrow[r]&\tilde{H}_{k+1}(\cF^{k+1},\cF^{k+1}_{j-1})\arrow[r]&\tilde{H}_{k+1}(\cF^{k+1},\cF^{k+1}_{j})\arrow[r]&\frac{\tilde{C}_{k}({\cF}^{k}_{j})}{\tilde{C}_{k}({\cF}^{k}_{j-1})}
\arrow[r]&{}\\
{}\arrow[r]&\tilde{H}_{k}(\cF^{k+1},\cF^{k+1}_{j-1})\arrow[r]& \tilde{H}_{k}(\cF^{k+1},\cF^{k+1}_{j})\arrow[r]&0.
\end{tikzcd}
\]
The alternating sum yields the result.
\end{proof}

\begin{prop}\label{prop:minKL}
Let $(K,L)$ be a \emph{minimal} acyclic pair of size~$\rk\partial_{k+1}$. 
The simplices of~$K$ of weight~$\leq j$ form a subset $K_j$
as in{\rm~\autoref{prop:kj}} and the number of simplices of
weight equal to~$j$ is the right-hand side of~\eqref{eq:kj}.

The simplices of~$L$ of weight~$\leq j$ form a subset $L_j$
as in{\rm~\autoref{prop:lj}} and the number of simplices of
weight equal to~$j$ is the right-hand side of~\eqref{eq:lj}.

Any $(K,L)$ with the above properties is minimal.
\end{prop}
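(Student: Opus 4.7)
By \autoref{prop:acyclic}, the acyclicity constraints on $K$ and on $L$ decouple, so I plan to minimize $\omega(K)$ and $\omega(L)$ separately and then combine. For the $K$ part, set $N_j := \#\bigl(K\cap\tilde{\cF}^{k+1}_{j}\bigr)$, so that $N_{k+2}=\#K=\rk\partial_{k+1}$ and $N_{-1}=0$. Abel summation gives
\[
\omega(K)=\sum_{j=0}^{k+2} j(N_j-N_{j-1})=(k+2)\rk\partial_{k+1}-\sum_{j=0}^{k+1} N_j,
\]
so minimizing $\omega(K)$ is equivalent to maximizing each $N_j$. Acyclicity condition~\ref{Ac1} forces $\partial_{k+1}$ to be injective on $\KK\langle K\rangle$ and hence on the subspace $\KK\langle K\cap\tilde{\cF}^{k+1}_{j}\rangle\subset\tilde C_{k+1}(\cF^{k+1}_j)$. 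By the description of $u_{k+1,j}$ in \autoref{prop:kj}, this yields $N_j\le u_{k+1,j}$, and therefore
\[
\omega(K)\ge(k+2)\rk\partial_{k+1}-\sum_{j=0}^{k+1}u_{k+1,j},
\]
with equality if and only if $N_j=u_{k+1,j}$ for every $j$.

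Next I would verify that this bound is attained by a greedy construction along the filtration~\eqref{eq:Ffiltration}. Start with $K^{(0)}\subset\tilde{\cF}^{k+1}_{0}$ of size $u_{k+1,0}$ as furnished by \autoref{prop:kj}. Inductively, given $K^{(j-1)}\subset\tilde{\cF}^{k+1}_{j-1}$ of size $u_{k+1,j-1}$ on which $\partial_{k+1}$ is injective, I would adjoin $u_{k+1,j}-u_{k+1,j-1}$ simplices of weight exactly $j$ so that the enlarged family $K^{(j)}$ maps by $\partial_{k+1}$ to a basis of $\partial_{k+1}\bigl(\tilde C_{k+1}(\cF^{k+1}_j)\bigr)$. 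This is a standard basis-extension step: the target space has dimension $u_{k+1,j}$, strictly exceeding $\dim\partial_{k+1}(\KK\langle K^{(j-1)}\rangle)=u_{k+1,j-1}$ precisely when new generators are needed, and these can always be realized by simplices of $\tilde{\cF}^{k+1}_{j}\setminus\tilde{\cF}^{k+1}_{j-1}$ since $\tilde C_{k+1}(\cF^{k+1}_{j-1})$ already spans the smaller image. Setting $K:=K^{(k+2)}$ and $K_j:=K\cap\tilde{\cF}^{k+1}_{j}$, the resulting slices automatically satisfy the conditions of \autoref{prop:kj}, and the count of weight-$j$ simplices is $u_{k+1,j}-u_{k+1,j-1}$, which by~\eqref{eq:kj} is precisely the right-hand side claimed in the statement.

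The argument for $L$ is entirely parallel: one uses \autoref{prop:lj} in place of \autoref{prop:kj} to bound $\#(L\cap\tilde{\cF}^{k}_{j})\le u_{k,j}$ and greedily realizes equality, producing weight-$j$ counts $u_{k,j}-u_{k,j-1}$ as in~\eqref{eq:lj}. The converse clause is then immediate: any $(K,L)$ whose weight-slice cardinalities saturate the bounds $u_{k+1,j}$ and $u_{k,j}$ attains both lower bounds simultaneously and is therefore minimal. The main obstacle I anticipate is the greedy-extension step itself, namely checking that each new basis vector can be realized by a simplex of weight \emph{exactly} $j$ rather than a genuine linear combination; this requires a small matroid-style argument, but it reduces to the fact that any basis of $\partial_{k+1}(\tilde C_{k+1}(\cF^{k+1}_j))$ lifts to simplices of $\tilde{\cF}^{k+1}_{j}$, combined with the rank computation already encoded in \autoref{prop:kj}.
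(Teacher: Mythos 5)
Your argument is correct, and it takes a somewhat different route from the paper. The paper works directly with a given minimal pair: it compares the counts $v_{k+1,j}$ of weight-$\leq j$ simplices of $K$ with the maxima $u_{k+1,j}$ of \autoref{prop:kj}, and argues that if $v_{k+1,j-1}<u_{k+1,j-1}$ at some first level, one can perform a swap producing an acyclic pair $(K',L)$ with $\omega(K')<\omega(K)$, contradicting minimality (this exchange is left as ``easy''); the $L$-side is declared analogous. You instead prove a sharp global lower bound: the Abel-summation identity $\omega(K)=(k+2)\rk\partial_{k+1}-\sum_j N_j$ together with the per-level bound $N_j\leq u_{k+1,j}$ (which is exactly the injectivity-plus-rank count behind \autoref{prop:kj}) gives the bound, and your greedy construction along the filtration \eqref{eq:Ffiltration} exhibits a pair attaining it, so minimality forces $N_j=u_{k+1,j}$ for all $j$ and hence the slice counts of \eqref{eq:kj} and \eqref{eq:lj}; the converse clause follows since saturating pairs attain the minimum. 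In substance both proofs rest on the same two ingredients (the maximal cardinalities $u_{\cdot,j}$ and a Steinitz-exchange step), but your version makes the optimization transparent and supplies explicitly the witness that the paper's swap argument only implies, which is a genuine gain in completeness; the paper's local-swap argument, conversely, never needs to build a global minimizer. Two small points to keep in mind when writing it up: the combined bound for $\omega(K)+\omega(L)$ is attained only because the $K$- and $L$-conditions are independent (last assertion of \autoref{prop:acyclic}), so the two greedy constructions can be run simultaneously; and on the $L$-side the identification of the equality case $N'_j=u_{k,j}$ with the homological conditions of \autoref{prop:lj} amounts to the direct-sum characterization $\KK\langle L_j\rangle\oplus\bigl(\im\partial_{k+1}\cap\tilde{C}_k(\cF^k_j)\bigr)=\tilde{C}_k(\cF^k_j)$, which you use implicitly — this is the same level of detail as the paper, but worth one line. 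Your flagged ``weight exactly $j$'' issue is resolved exactly as you say: simplices of weight $\leq j-1$ map into $\partial_{k+1}\bigl(\KK\langle K^{(j-1)}\rangle\bigr)$ by the induction hypothesis, so the exchange step can only recruit weight-$j$ simplices.
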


\begin{proof}
By \autoref{prop:acyclic}, such an acyclic pair can be constructed.
Let $(K,L)$ be an acyclic pair of size $\rk\partial_{k+1}$.
Let us denote by $v_{k+1,j}$ (resp. $v_{k,j}$) the number of simplices
in $K$ (resp. $L$) of weight~$\leq j$. Since $u_{k+1,k+2}=v_{k+1,k+2}$,
let $j$ be the first value for which $u_{k+1,j}=v_{k+1,j}$ but 
$u_{k+1,j-1}>v_{k+1,j-1}$. If such a $j$ exists it is easy to construct 
an acyclic pair $(K',L)$ where $\omega(K')<\omega(K)$. 

A similar argument works for~$L$. Then, if $(K,L)$ is minimal, no such $j$ exists, 
i.e., $K,L$ satisfy the conditions of the statement.
%
%
%
\end{proof}

%
%

Minimal acyclic pairs are important since they determine their corresponding Fitting ideals.
The following result gives a formula for the maximal multiplicity of $s=(t-\lambda)$ in the Fitting ideals
(see~\eqref{eq:bs} for the definition of $b_{k,j}$).

\begin{thm}
\label{thm:bkl}
There are $\rk\partial_{k+1}=r+\ell_k$ non-zero Fitting ideals. Then
\begin{gather*}
\sum_j j\cdot \n_{k,j}(d)=b_{k,\ell_k}=
\sum_{j=0}^{k+1}\dim\tilde{H}_k(\cF^{k+1}_{j})-(k+2)\dim\tilde{H}_k(\cF)+\\
\sum_{j=0}^k \dim\tilde{H}_{k+1}(\cF^{k+1},\cF^{k}_{j})
-(k+1)\dim\tilde{H}_{k+1}(\cF^{k+1},\cF^k).
\end{gather*}
\end{thm}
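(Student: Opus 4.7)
The plan is to compute the Fitting ideals of the presentation matrix $M = (\langle\sigma|\tau\rangle s^{\omega(\sigma)-\omega(\tau)})_{\sigma,\tau}$ of $\coker(\partial^\chi_{k+2}\otimes 1_{\KK[[s]]})$, which by \autoref{lem:torsion} controls the $\Phi_d$-primary torsion of $H_{k+1}(A^\chi_\Gamma;\KK)$. The count of non-zero Fitting ideals equals the generic rank of $M$, that is, the maximal size of a non-vanishing minor, and \autoref{prop:acyclic} identifies this maximum with $\rk\partial_{k+1}$; combined with the Smith normal form this yields the equality $r+\ell_k=\rk\partial_{k+1}$.

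To recover $b_{k,\ell_k}$, note that it is the $s$-adic valuation of the Fitting ideal generated by the $(\rk\partial_{k+1})\times(\rk\partial_{k+1})$ minors of $M$. Since $\KK[[s]]$ is a DVR, this valuation coincides with the minimum $s$-valuation of a non-vanishing maximal minor. By \autoref{lem:mult_KL} the minor indexed by an acyclic pair $(K,L)$ has valuation $\omega(K)+\omega(L)-\omega(\tilde{\cF}^k)$, and \autoref{prop:minKL} shows that this minimum is realized precisely by the minimal acyclic pairs. For such a pair, the right-hand sides of \eqref{eq:kj} and \eqref{eq:lj} give the exact multiplicity of simplices of weight $j$ in $K$ and $L$ respectively. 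An Abel summation using these telescoping identities then produces
\[
\omega(K) = \sum_{j=0}^{k+1}\dim\tilde{H}_k(\cF^{k+1}_j) - (k+2)\dim\tilde{H}_k(\cF),
\]
and likewise
\[
\omega(L) = \sum_{j=0}^{k}\dim\tilde{H}_k(\cF^{k+1},\cF^k_j) - (k+1)\dim\tilde{H}_k(\cF^{k+1},\cF^k).
\]

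The main obstacle is the final bookkeeping: the theorem statement involves $\dim\tilde{H}_{k+1}$ of the relative pairs, whereas the direct computation of $\omega(L)$ via \autoref{prop:lj} naturally produces $\dim\tilde{H}_k$. The subtracted term $-\omega(\tilde{\cF}^k)$ in the expression for $b_{k,\ell_k}$ is exactly what reconciles the two formulations. Since the relative chain complex $\tilde{C}_*(\cF^{k+1},\cF^k_j)$ is concentrated in degrees $k$ and $k+1$, its Euler characteristic identity reads
\[
\dim\tilde{H}_{k+1}(\cF^{k+1},\cF^k_j)-\dim\tilde{H}_k(\cF^{k+1},\cF^k_j)=\dim\tilde{C}_{k+1}(\cF)-\dim\tilde{C}_k(\cF)+\#\tilde{\cF}^k_j.
\]
Summing for $j=0,\ldots,k$, subtracting $(k+1)$ times the $j=k+1$ case (in which $\cF^k_{k+1}=\cF^k$), and invoking the elementary identity $\omega(\tilde{\cF}^k)=(k+1)\#\tilde{\cF}^k-\sum_{j=0}^k\#\tilde{\cF}^k_j$, converts the $\tilde{H}_k$ sum appearing in $\omega(L)$ into the advertised $\tilde{H}_{k+1}$ sum minus $\omega(\tilde{\cF}^k)$. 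Adding $\omega(K)$ then delivers the formula claimed in the theorem.
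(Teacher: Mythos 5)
Your proposal is correct and takes essentially the same route as the paper: both reduce $b_{k,\ell_k}$ to $\omega(K)+\omega(L)-\omega(\tilde{\cF}^k)$ for a minimal acyclic pair of size $\rk\partial_{k+1}$ via \autoref{lem:mult_KL}, \autoref{prop:acyclic} and \autoref{prop:minKL}, and then evaluate $\omega(K)$ and $\omega(L)$ by Abel summation of \eqref{eq:kj} and \eqref{eq:lj}. The only difference is the final, equivalent bookkeeping step: you convert the $\tilde{H}_k(\cF^{k+1},\cF^k_j)$ terms into the stated $\tilde{H}_{k+1}$ terms using Euler characteristics of the two-step relative complexes together with $\omega(\tilde{\cF}^k)=(k+1)\#\tilde{\cF}^k-\sum_{j=0}^k\#\tilde{\cF}^k_j$, while the paper does the same reconciliation through the long exact sequence of the triple $(\cF^{k+1},\cF^k,\cF^k_j)$.
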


\begin{proof}
From \autoref{lem:mult_KL} we need to compute $\omega(K)+\omega(L)-\omega(\cF^k)$ for a minimal acyclic pair $(K,L)$
of size $\rk\partial_{k+1}$. 

From \autoref{prop:minKL} and \autoref{prop:kj}, we have the following value for $\omega(K)$:
\begin{gather*}
\sum_{j=1}^{k+2} j\dim \tilde{H}_k(\cF^{k+1}_{j-1})- \sum_{j=1}^{k+2} j\dim \tilde{H}_k(\cF^{k+1}_{j})=
\sum_{j=0}^{k+1} \dim \tilde{H}_k(\cF^{k+1}_{j})-(k+2)\dim \tilde{H}_k(\cF^{k+1}).
\end{gather*}
From \autoref{prop:minKL} and \autoref{prop:lj}, we have the following value for $\omega(L)$:
\begin{gather*}
\sum_{j=1}^{k+1} j\dim \tilde{H}_k(\cF^{k+1},\cF^{k}_{j-1})- \sum_{j=1}^{k+1} j\dim \tilde{H}_k(\cF^{k+1},\cF^{k}_{j})=\\
\sum_{j=0}^{k} \dim \tilde{H}_k(\cF^{k+1},\cF^{k}_{j})-(k+1)\dim \tilde{H}_k(\cF^{k+1},\cF^k)=\sum_{j=0}^{k} \dim \tilde{H}_k(\cF^{k+1},\cF^{k}_{j}).
\end{gather*}
Finally, note that $\dim\tilde{C}_{k}(\cF^k,\cF^k_j)$ is the number of $k$-cells of weight $>j$. Hence
\begin{equation*}
\omega(\cF^k)=\sum_{j=0}^k\dim\tilde{C}_{k}(\cF^k,\cF^k_j)=\sum_{j=0}^k\dim\tilde{H}_{k}(\cF^k,\cF^k_j). 
\end{equation*}
Let us consider the triple $(\cF^{k+1},\cF^k,\cF^k_j)$ and its homology long exact sequence: 
\[
\begin{tikzcd}[column sep=10pt]
0\ar[r]&\tilde{H}_{k+1}(\cF^{k+1},\cF^k_{j})\ar[r]&
\tilde{H}_{k+1}(\cF^{k+1},\cF^k)\ar[r]&
\tilde{H}_{k}(\cF^{k},\cF^k_{j})\ar[r]&
\tilde{H}_{k}(\cF^{k+1},\cF^k_{j})\ar[r]&0
\end{tikzcd}
\]
We obtain that
\begin{gather*}
\omega(L)-\omega(\cF^k)=\sum_{j=0}^k \dim\tilde{H}_{k+1}(\cF^{k+1},\cF^k_{j}) -(k+1) \dim\tilde{H}_{k+1}(\cF^{k+1},\cF^k).
\qedhere
\end{gather*}
\end{proof}

\subsection{Bounding the Jordan blocks}
\mbox{}

The purpose of this section is to give a bound for the size of the Jordan blocks in the 
$(t-\lambda)$-primary part of ${H}_{k+1}(\bt^\chi;\KK)$.
This will be done by computing the drop in multiplicities in the Fitting ideals. 
This is measured by the acyclic pairs $(K,L)$ as considered above. Their construction is studied next.

\begin{prop}
Let $(K,L)$ be an acyclic pair with size $<\rk\partial_{k+1}$.
Let $\sigma\in\tilde{\cF}^{k+1}\setminus L$. Then, there exists
$\tau\in L$ such that $(K\cup\{\sigma\},L\setminus\{\tau\})$
is an acyclic pair.
\end{prop}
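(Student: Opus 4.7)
The plan is to verify the two acyclicity conditions for the modified pair $(K\cup\{\sigma\},L\setminus\{\tau\})$ by tracking how $\tilde{H}_k$ changes when a single $(k+1)$-simplex is added to $K$ and a single $k$-simplex is removed from~$L$. The essential idea is an exchange argument: the class $[\partial\sigma]\in\tilde{H}_k(M_K)$ produced by adjoining $\sigma$ can, thanks to the $\tilde{H}_k$-isomorphism that comes with the acyclicity of $(K,L)$, be transported back to an explicit element $\eta\in\tilde{H}_k(M_L)$; because $M_L$ has no cells above dimension~$k$, $\eta$ is literally a chain supported on~$L$, and any $\tau\in L$ in the support of $\eta$ will serve.

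First I would feed the pair $(M_{K\cup\{\sigma\}},M_K)$ into its relative long exact sequence; the relative chain complex is concentrated in degree $k+1$, yielding
\[
0\to\tilde{H}_{k+1}(M_K)\to\tilde{H}_{k+1}(M_{K\cup\{\sigma\}})\to\KK\xrightarrow{\,[\partial\sigma]\,}\tilde{H}_k(M_K)\to\tilde{H}_k(M_{K\cup\{\sigma\}})\to 0.
\]
The size hypothesis $\#K<\rk\partial_{k+1}$ leaves room to enlarge~$K$, and $\sigma$ must be chosen so that $\partial\sigma\notin\partial_{k+1}(\KK\langle K\rangle)$, which is the precise condition ensuring that injectivity of $\partial_{k+1}$ on $\KK\langle K\cup\{\sigma\}\rangle$ persists; this is the implicit hypothesis under which the conclusion can hold, since otherwise $\tilde{H}_{k+1}(M_{K\cup\{\sigma\}})\neq 0$ and no $\tau$ rescues acyclicity. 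Under this choice both outer terms vanish, so $[\partial\sigma]\neq 0$ in $\tilde{H}_k(M_K)$ and $\tilde{H}_k(M_{K\cup\{\sigma\}})\cong\tilde{H}_k(M_K)/\KK[\partial\sigma]$.

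Next, I would use the isomorphism $\tilde{H}_k(M_L)\xrightarrow{\cong}\tilde{H}_k(M_K)$ from the acyclicity of $(K,L)$ to pull $[\partial\sigma]$ back to a non-zero class $\eta\in\tilde{H}_k(M_L)$. Since $M_L$ has no $(k+1)$-cells, $\tilde{H}_k(M_L)=\ker(\partial_k|_{\KK\langle L\rangle})\subset\KK\langle L\rangle$, so $\eta$ is a concrete non-zero $k$-chain supported on~$L$; pick any $\tau\in L$ at which $\eta$ has non-zero coefficient.

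Finally, I would verify the new acyclicity. The long exact sequence of $(M_L,M_{L\setminus\{\tau\}})$, whose relative complex is $\KK$ in degree~$k$, has connecting map the $\tau$-coefficient functional $c_\tau:\tilde{H}_k(M_L)\to\KK$; since $c_\tau(\eta)\neq 0$, this map is surjective, giving $\tilde{H}_{k-1}(M_{L\setminus\{\tau\}})\cong\tilde{H}_{k-1}(M_L)$ and realizing $\tilde{H}_k(M_{L\setminus\{\tau\}})=\ker c_\tau$ as a hyperplane avoiding~$\eta$. In the composition
\[
\tilde{H}_k(M_{L\setminus\{\tau\}})\hookrightarrow\tilde{H}_k(M_L)\xrightarrow{\cong}\tilde{H}_k(M_K)\twoheadrightarrow\tilde{H}_k(M_{K\cup\{\sigma\}}),
\]
the kernel of the final surjection corresponds to $\KK\eta$ under the iso and meets $\ker c_\tau$ trivially, so the composition is injective and, by a dimension count, bijective. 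In degrees $j\leq k-1$ the required isomorphisms follow by chaining along $M_{L\setminus\{\tau\}}\hookrightarrow M_L\hookrightarrow M_K\hookrightarrow M_{K\cup\{\sigma\}}$ (the outer two from the original acyclicity of $(K,L)$ together with the $\tau$-sequence above, the innermost because $\sigma$ is a $(k+1)$-cell). The main conceptual obstacle is locating a usable $\tau$, and this is resolved precisely because $\tilde{H}_k(M_L)$ embeds in $\KK\langle L\rangle$, making non-triviality of $\eta$ translate to a visible non-zero coordinate in~$L$.
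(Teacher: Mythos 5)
Your argument is correct and follows essentially the same route as the paper: adjoin $\sigma$, note that $\tilde{H}_k$ drops exactly by the class of $\partial\sigma$, transport that class through the isomorphism $\tilde{H}_k(M_L)\cong\tilde{H}_k(M_K)$ to a cycle supported on $L$, and delete a simplex $\tau$ in its support, then verify \ref{Ac1}--\ref{Ac2} just as the paper does (you use the long exact sequences of the pairs $(M_{K\cup\{\sigma\}},M_K)$ and $(M_L,M_{L\setminus\{\tau\}})$ where the paper uses Mayer--Vietoris with $|\sigma|,\partial|\sigma|$ and $|\tau|,\partial|\tau|$, a purely cosmetic difference). Your explicit flagging of the requirement $\partial_{k+1}\sigma\notin\partial_{k+1}(\KK\langle K\rangle)$, guaranteed to be attainable by the size hypothesis, is a point the paper leaves tacit (its diagram simply marks $\tilde{H}_{k+1}(M_{K\cup\{\sigma\}})=0$), and it is indeed needed for the stated conclusion.
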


\begin{proof}
Note that $M_{K\cup\{\sigma\}}=M_{K}\cup|\sigma|$ and
$\partial{|\sigma|}=M_{K}\cap|\sigma|$. Using Mayer-Vi\'etoris
long exact sequence we have
\[
\begin{tikzcd}
\tilde{H}_{k+1}(M_{K\cup\{\sigma\}})\ar[r]&[-5mm]
\tilde{H}_k(\partial{|\sigma|})\ar[r]&[-5mm]
\tilde{H}_k(M_{K})\oplus\tilde{H}_k(|\sigma|)\ar[r]&[-5mm]
\tilde{H}_k(M_{K\cup\{\sigma\}})\ar[r]&[-5mm]
\tilde{H}_{k-1}(\partial{|\sigma|})
\\[-25pt]
\rotatebox{90}{$=$}&
\rotatebox{-90}{\hspace{-3mm}$\cong$}&
\hspace{18mm}\rotatebox{90}{$=$}&&
\rotatebox{90}{$=$}
\\[-25pt]
0&
\bz&
\hspace{18mm}0&&0
\end{tikzcd}
\]
i.e, 
\[
\begin{tikzcd}
0\ar[r]&\bz\ar[r]&\tilde{H}_k(M_{K})\ar[r]&\tilde{H}_k(M_{K\cup\{\sigma\}})\ar[r]&0.
\end{tikzcd}
\]
In particular, the composition $\phi:\tilde{H}_k(M_L)\to\tilde{H}_k(M_{K\cup\{\sigma\}})$ has a one-dimensional kernel. Recall that 
$\tilde{H}_k(M_{K\cup\{\sigma\}})=\ker{\alpha_k}_{|\KK\langle L\rangle}$.
Let $\tau\in L$ in the support of the non-zero elements of the kernel of
$\phi$. We have:
\[
\begin{tikzcd}
0&\hspace{18mm}0&&
\bz\\[-25pt]
\rotatebox{90}{$=$}&
\hspace{18mm}\rotatebox{90}{$=$}&&
\rotatebox{-90}{\hspace{-3mm}$\cong$}
\\[-25pt]
\tilde{H}_{k}(\partial|\tau|)\ar[r]&
\tilde{H}_k(M_{L\setminus\{\tau\}})\oplus\tilde{H}_k(|\tau|)\ar[r]&
\tilde{H}_k(M_{L})\ar[r]&
\tilde{H}_{k-1}(\partial{|\tau|})
\\[-15pt]
\tilde{H}_{k-1}(\partial{|\tau|})\ar[r]&
\tilde{H}_{k-1}(M_{L\setminus\{\tau\}})\oplus\tilde{H}_{k-1}(|\tau|)\ar[r]&
\tilde{H}_{k-1}(M_{L})\ar[r]&
\tilde{H}_{k-2}(\partial{|\tau|})
\\[-25pt]
\rotatebox{-90}{\hspace{-3mm}$\cong$}&
\hspace{18mm}\rotatebox{90}{$=$}&&
\rotatebox{90}{$=$}
\\[-25pt]
\bz&
\hspace{18mm}0&&0
\end{tikzcd}
\]
i.e,
\[
\begin{tikzcd}
0\ar[r]&[-12pt]\tilde{H}_k(M_{L\setminus\{\tau\}})\ar[r]&[-5pt]
\tilde{H}_k(M_{L})\ar[r]&[-5pt]
\bz\ar[r]&[-5pt]
\tilde{H}_{k-1}(M_{L\setminus\{\tau\}})\ar[r]&[-5pt]
\tilde{H}_{k-1}(M_{L})\ar[r]&[-12pt]0.
\end{tikzcd}
\]
For the choice of $\tau$ the map $\bz\to\tilde{H}_{k-1}(M_{L\setminus\{\tau\}})$ is zero and the first terms yield a short exact sequence. Moreover, by construction, the map 
$\tilde{H}_{k}(M_{L\setminus\{\tau\}})\to\tilde{H}_{k}(M_{K\cup\{\sigma\}})$ is injective; as the dimensions are equal,
it is bijective and $(K\cup\{\sigma\},L\setminus\{\tau\})$ is an acyclic pair.
\end{proof}

\begin{cor}
Any acyclic pair~$(K',L')$ is obtained from an acyclic pair~$(K,L)$ of size $\rk\partial_{k+1}$ by removing simplices from~$K$
and adding simplices to~$L$.
\end{cor}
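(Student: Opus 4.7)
The plan is a direct induction using the preceding proposition as the induction step. I would reformulate the statement symmetrically as follows: given any acyclic pair $(K',L')$, there exists an acyclic pair $(K,L)$ of size $\rk\partial_{k+1}$ such that $K'\subseteq K$ and $L'\supseteq L$. Reading this backwards (namely, obtaining $(K',L')$ from $(K,L)$ by removing elements from $K$ and adding them to $L$) is exactly the conclusion of the corollary.

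The argument proceeds by induction on the quantity $\rk\partial_{k+1}-\#K'$. The base case $\#K'=\rk\partial_{k+1}$ is trivial, taking $(K,L)=(K',L')$. For the induction step, suppose $\#K'=s<\rk\partial_{k+1}$. Since $\rk\partial_{k+1}\leq \dim\tilde{C}_{k+1}(\cF)=\#\tilde{\cF}^{k+1}$, the set $\tilde{\cF}^{k+1}\setminus K'$ is non-empty, so I may pick any simplex $\sigma$ in it. Applying the preceding proposition to this $\sigma$ produces a $\tau\in L'$ such that $(K'\cup\{\sigma\},L'\setminus\{\tau\})$ is again an acyclic pair, now of size $s+1$, and visibly satisfying $K'\subseteq K'\cup\{\sigma\}$ and $L'\supseteq L'\setminus\{\tau\}$. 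Iterating this process exactly $\rk\partial_{k+1}-s$ times yields chains
\[
K'\subseteq K'\cup\{\sigma_1\}\subseteq\cdots\subseteq K,\qquad L'\supseteq L'\setminus\{\tau_1\}\supseteq\cdots\supseteq L,
\]
with $(K,L)$ an acyclic pair of the maximal size $\rk\partial_{k+1}$.

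There is no real obstacle here: the preceding proposition already packages both the existence of $\tau$ and the preservation of acyclicity after the exchange, and the only side check needed, namely the availability of a fresh $\sigma$ at each stage, is immediate from the inequality $\rk\partial_{k+1}\leq\#\tilde{\cF}^{k+1}$. Thus the corollary is essentially a formal iteration of the previous proposition.
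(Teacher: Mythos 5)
Your overall strategy is exactly the intended one: the paper offers no separate argument for this corollary, treating it as the iteration of the preceding exchange proposition, which is what you do. However, there is a genuine gap in your induction step, namely the claim that you may ``pick any simplex $\sigma$'' in $\tilde{\cF}^{k+1}\setminus K'$ and that the only side check is the availability of a fresh $\sigma$, guaranteed by $\rk\partial_{k+1}\leq\#\tilde{\cF}^{k+1}$. The exchange cannot work for an arbitrary $\sigma$: if $\partial_{k+1}\sigma$ already lies in $\partial_{k+1}(\KK\langle K'\rangle)$, then $\sigma-z$ is a nontrivial element of $\ker\partial_{k+1}\cap\KK\langle K'\cup\{\sigma\}\rangle$ for a suitable $z\in\KK\langle K'\rangle$, so condition \ref{Ac1} fails for $M_{K'\cup\{\sigma\}}$, and since \ref{Ac1} does not involve $L'$ at all, no choice of $\tau$ can repair the pair. (Concretely, for $k+1=1$ take a graph containing a triangle, $K'$ two of its edges and $\sigma$ the third.) The printed hypothesis ``$\sigma\in\tilde{\cF}^{k+1}\setminus L$'' of the proposition is vacuous as written (since $L$ consists of $k$-simplices) and must be read as requiring a suitable $\sigma$, precisely one with $\tilde{H}_{k+1}(M_{K'\cup\{\sigma\}})=0$; this is the point your argument glosses over by invoking the proposition as a black box for every $\sigma$.

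The gap is easily closed, but with a different counting argument than the one you give. What you need at each stage is a $(k+1)$-simplex $\sigma$ with $\partial_{k+1}\sigma\notin\partial_{k+1}(\KK\langle K'\rangle)$, and such a $\sigma$ exists because $\#K'<\rk\partial_{k+1}$: the subspace $\partial_{k+1}(\KK\langle K'\rangle)$ has dimension at most $\#K'$, strictly smaller than $\dim\im\partial_{k+1}$, and the boundaries of the $(k+1)$-simplices span $\im\partial_{k+1}$, so some simplex has boundary outside that subspace (and it is automatically not in $K'$). So the relevant inequality is $\#K'<\rk\partial_{k+1}$, i.e.\ the very hypothesis of the proposition, not $\rk\partial_{k+1}\leq\#\tilde{\cF}^{k+1}$. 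With $\sigma$ chosen this way at every step, your induction on $\rk\partial_{k+1}-\#K'$ goes through verbatim and yields the corollary.
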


The following result provides information about acyclic pairs after adding simplices, rather than removing them.

\begin{prop}\label{prop:less-size}
Let $(K,L)$ be an acyclic pair of positive size. Let $\sigma\in K$ and let $\tau\in\tilde{\cF}^{k}\setminus L$.
Then $(K\setminus\{\sigma\},L\cup\{\tau\})$ is an acyclic pair if and only if
$\partial_{k+1}(\sigma)$ and $\tau$ are both non trivial
$\bmod \partial_{k+1}(\KK\langle K\setminus\{\sigma\}\rangle)\oplus\KK\langle L\rangle$.
\end{prop}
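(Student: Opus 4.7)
The plan is to translate acyclicity of the new pair $(K',L')=(K\setminus\{\sigma\},L\cup\{\tau\})$ into a single injectivity/surjectivity condition on the relative differential
\[
\partial':\KK\langle K\setminus\{\sigma\}\rangle\longrightarrow\tilde{C}_k(\cF)\big/\KK\langle L\cup\{\tau\}\rangle.
\]
Since $\#K'+\#L'=\#K+\#L=\#\tilde{\cF}^k$, source and target have equal dimension, so acyclicity is equivalent to either injectivity or surjectivity of $\partial'$.

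First I would unpack the hypothesis. Writing $\phi:\KK\langle K\rangle\to\tilde{C}_k(\cF)/\KK\langle L\rangle$ for the relative differential of $(K,L)$, the acyclicity hypothesis says $\phi$ is an isomorphism, hence
\[
\tilde{C}_k(\cF)=\partial_{k+1}(\KK\langle K\rangle)\oplus\KK\langle L\rangle.
\]
Since $\phi$ is injective and $\sigma\notin\KK\langle K\setminus\{\sigma\}\rangle$, the class of $\partial_{k+1}(\sigma)$ is linearly independent from $\partial_{k+1}(\KK\langle K\setminus\{\sigma\}\rangle)$ modulo $\KK\langle L\rangle$, which refines the decomposition to
\[
\tilde{C}_k(\cF)=\partial_{k+1}(\KK\langle K\setminus\{\sigma\}\rangle)\oplus\KK\langle\partial_{k+1}(\sigma)\rangle\oplus\KK\langle L\rangle.
\]
Set $U:=\partial_{k+1}(\KK\langle K\setminus\{\sigma\}\rangle)\oplus\KK\langle L\rangle$; this is a legitimate direct sum by the injectivity of $\phi$, and $\tilde{C}_k(\cF)/U$ is one-dimensional, spanned by the class of $\partial_{k+1}(\sigma)$. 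In particular, the statement's condition that $\partial_{k+1}(\sigma)$ be non-trivial modulo $U$ holds automatically from the acyclicity of $(K,L)$.

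Next I would verify the two equivalent reformulations. Surjectivity of $\partial'$ says $\tilde C_k(\cF)=U+\KK\langle\tau\rangle$, which, given that $\tilde C_k(\cF)/U$ is one-dimensional, is equivalent to $\tau\notin U$. For injectivity, suppose $\partial_{k+1}(z)=a\tau+\ell$ with $z\in\KK\langle K\setminus\{\sigma\}\rangle$, $a\in\KK$, $\ell\in\KK\langle L\rangle$: if $a=0$, then $\partial_{k+1}(z)\in\partial_{k+1}(\KK\langle K\setminus\{\sigma\}\rangle)\cap\KK\langle L\rangle=0$, and $z\in\ker\partial_{k+1}\cap\KK\langle K\rangle=0$ by (Ac1) for $(K,L)$; if $a\neq 0$, then $\tau\in U$. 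Either way, injectivity is equivalent to $\tau\notin U$.

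Combining the two gives that $(K',L')$ is acyclic if and only if $\tau\notin U$; together with the automatic non-triviality of $\partial_{k+1}(\sigma)$ modulo $U$, this matches the proposition statement verbatim. There is no serious obstacle here: once the acyclicity of $(K,L)$ is converted into the direct-sum decomposition above, everything is linear algebra in a one-dimensional quotient. The only subtlety worth flagging is the asymmetry of the two listed conditions---one of them is a tautology under the hypothesis, and the authors presumably include both to keep the statement visibly symmetric in anticipation of its iterated use when enumerating minimal acyclic pairs.
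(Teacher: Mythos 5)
Your argument is correct and follows essentially the same route as the paper: reduce acyclicity of a pair to the direct-sum decomposition $\partial_{k+1}(\KK\langle K\rangle)\oplus\KK\langle L\rangle=\tilde{C}_k(\cF)$, set $H=\partial_{k+1}(\KK\langle K\setminus\{\sigma\}\rangle)\oplus\KK\langle L\rangle$, and argue in the one-dimensional quotient $\tilde{C}_k(\cF)/H$. You simply spell out the linear algebra (and the redundancy of the condition on $\partial_{k+1}(\sigma)$) more explicitly than the paper's terse proof does.
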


\begin{proof}
Since $(K,L)$ is an acyclic pair, we know that $\partial_{k+1}(\KK\langle K\rangle)\oplus\KK\langle L\rangle=
\tilde{C}_k(\cF)$ and ${\partial_{k+1}}_{|\KK\langle K\rangle}$ is injective. Given $\sigma,\tau$
as in the statement, we clearly have that the pair
$(K\setminus\{\sigma\},L\cup\{\tau\})$ is acyclic if and only if $\partial_{k+1}(\KK\langle K\setminus\{\sigma\}\rangle)\oplus\KK\langle L\cup\{\tau\}\rangle=
\tilde{C}_k(\cF)$.

Let $H:=\partial_{k+1}(\KK\langle K\setminus\{\sigma\}\rangle)\oplus\KK\langle L\rangle$.
Since $\partial_{k+1}(\KK\langle K\rangle)\oplus\KK\langle L\rangle=
\tilde{C}_k(\cF)$ we deduce that both $\partial_{k+1}(\sigma)$ and $\tau$ are non trivial $\bmod H$.
\end{proof}

This provides an upper bound for the Jordan blocks of the $(t-\lambda)$-primary part of the 
homology~${H}_{k+1}(\bt^\chi;\KK)$.

\begin{cor}\label{cor:Jordan_max}
The value $a_{k,\ell_k}$ is at most $k+2$, i.e., any exponent in the torsion of $H_{k+1}(\bt^\chi)$ 
is at most~$k+2$.
\end{cor}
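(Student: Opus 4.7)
The plan is to translate the bound $a_{k,\ell_k}\leq k+2$, which by~\eqref{eq:bs} equals $b_{k,\ell_k}-b_{k,\ell_k-1}\leq k+2$, into a comparison between minimal acyclic pairs of two consecutive sizes. The partial sums $b_{k,j}$ record the $s$-adic valuations of the nonzero Fitting ideals; in particular, $b_{k,\ell_k-1}$ is the minimum $s$-valuation over all minors of size $\rk\partial_{k+1}-1$, and by~\autoref{lem:mult_KL} it is realised by some acyclic pair $(K',L')$ with $\#K'=\rk\partial_{k+1}-1$ and $b_{k,\ell_k-1}=\omega(K')+\omega(L')-\omega(\tilde{\cF}^k)$. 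Likewise $b_{k,\ell_k}$ is the minimum $s$-valuation over acyclic pairs of the full size $\rk\partial_{k+1}$.

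The decisive step is to enlarge $(K',L')$ to an acyclic pair of size $\rk\partial_{k+1}$ by invoking the enlargement proposition stated immediately before~\autoref{prop:less-size}. Since $\#K'<\rk\partial_{k+1}\leq\#\tilde{\cF}^{k+1}$, we may choose any $\sigma\in\tilde{\cF}^{k+1}\setminus K'$, and that proposition supplies $\tau\in L'$ making $(K'\cup\{\sigma\},L'\setminus\{\tau\})$ acyclic of size $\rk\partial_{k+1}$. Applying~\autoref{lem:mult_KL} to this enlarged pair, the $s$-valuation of the associated maximal minor equals $b_{k,\ell_k-1}+\omega(\sigma)-\omega(\tau)$, which by minimality of $b_{k,\ell_k}$ must be at least $b_{k,\ell_k}$.

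To conclude, it suffices to estimate $\omega(\sigma)-\omega(\tau)$. A $(k+1)$-simplex $\sigma$ has exactly $k+2$ vertices, so $\omega(\sigma)=\#\{v\in\sigma\mid d\mid n_v\}\leq k+2$, and $\omega(\tau)\geq 0$ is automatic. Combining these, $b_{k,\ell_k}\leq b_{k,\ell_k-1}+(k+2)$, i.e.\ $a_{k,\ell_k}\leq k+2$, as claimed. The main content lies in the enlargement proposition, which does the combinatorial heavy lifting; the rest of the argument is the elementary observation that the weight of a $(k+1)$-simplex cannot exceed its number of vertices, so I do not anticipate any real obstacle.
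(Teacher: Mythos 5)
Your argument is correct and is essentially the paper's proof: both bound the jump $a_{k,\ell_k}=b_{k,\ell_k}-b_{k,\ell_k-1}$ by comparing a minimal acyclic pair of size $\rk\partial_{k+1}-1$ with a full-size acyclic pair differing from it by a single swap of a $(k+1)$-simplex $\sigma$ (weight at most $k+2$) for a $k$-simplex $\tau$ (weight at least $0$), using \autoref{lem:mult_KL} and the minimality of the Fitting valuations. The only cosmetic difference is direction: you enlarge the smaller pair via the proposition preceding \autoref{prop:less-size}, while the paper cites the corollary of that same proposition saying the smaller pair is obtained from a full-size one by such a swap.
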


\begin{proof}
We have to find minimal acyclic pairs of size~$\rk\partial_{k+1}-1$. It must come from a minimal
acyclic pair of size~$\rk\partial_{k+1}$. The highest gap is to take out a $(k+1)$-simplex~$\sigma$ of weight~$k+2$
and introduce a $k$-simplex~$\tau$ of weight~$0$.
\end{proof}

The number of blocks of size $(k+2)$ is given in terms of the flag complex.

\begin{prop}
\label{prop:maxJordan}
The number $\#\{j\geq 1\mid a_{k,j}=k+2\}$ 
equals the rank of the map from 
$\ker(\tilde{H}_k(\cF^k_0)\to\tilde{H}_k(\cF))$ to $\ker(\tilde{H}_k(\cF^{k+1}_{k+1})\to\tilde{H}_k(\cF))$.
\end{prop}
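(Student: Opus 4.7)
The plan is to interpret $\#\{j\geq 1\mid a_{k,j}=k+2\}$ as the maximal length $m$ of a chain of consecutive ``maximal drops'' in the Fitting-ideal sequence, and then to identify this $m$ with $\rk\phi$, where $\phi$ is the map in the statement. Starting from a minimal acyclic pair $(K,L)$ of size $\rk\partial_{k+1}$ (as produced by \autoref{prop:minKL}), the equalities $a_{k,\ell_k-i+1}=k+2$ for $i=1,\dots,m$ amount, via \autoref{prop:less-size} combined with \autoref{cor:Jordan_max}, to producing an acyclic pair of size $\rk\partial_{k+1}-m$ by removing $m$ weight-$(k+2)$ simplices from $K$ and adding $m$ weight-$0$ $k$-simplices to $L$. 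I write $K^{+}:=K\cap\tilde{\cF}^{k+1}_{k+2}$ and $\tilde L^{-}:=\tilde{\cF}^{k}_{0}\setminus L$; the problem reduces to computing the maximal size $m$ of such a swap with $\sigma_i\in K^{+}$ and $\tau_j\in\tilde L^{-}$.

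I then recast acyclicity of the swap as a linear-independence condition modulo an explicit subspace. By \autoref{prop:kj} applied at $j=k+1$, $\partial_{k+1}$ is injective on $\KK\langle K\setminus K^{+}\rangle$, and a dimension count gives $\partial\KK\langle K\setminus K^{+}\rangle=\im\bigl(\partial_{k+1}|_{\tilde{C}_{k+1}(\cF^{k+1}_{k+1})}\bigr)$. Setting $H:=\partial\KK\langle K\setminus K^{+}\rangle\oplus\KK\langle L\rangle$, the decomposition $\tilde{C}_k(\cF)=\partial\KK\langle K\rangle\oplus\KK\langle L\rangle$ together with $\partial\KK\langle K\rangle=\partial\KK\langle K\setminus K^{+}\rangle\oplus\partial\KK\langle K^{+}\rangle$ yields a canonical identification
\[
\tilde{C}_k(\cF)/H\;\cong\;\ker\bigl(\tilde{H}_k(\cF^{k+1}_{k+1})\to\tilde{H}_k(\cF)\bigr).
\]
A direct check using injectivity of $\partial_{k+1}|_{\KK\langle K\rangle}$ shows that $(K\setminus\{\sigma_i\},L\cup\{\tau_j\})$ is acyclic if and only if the $\tau_j$'s are linearly independent in $\tilde{C}_k(\cF)/H$; for the nontrivial direction one extends their images to a basis of $\tilde{C}_k(\cF)/H$ using a subset of the basis $\{\partial\sigma:\sigma\in K^{+}\}$, and the complementary $\sigma$'s serve as $\{\sigma_i\}$. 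Consequently the maximum $m$ equals $\rk\bar\psi$, where $\bar\psi:\KK\langle\tilde L^{-}\rangle\to\tilde{C}_k(\cF)/H$ is induced by inclusion.

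To conclude, I identify $\bar\psi$ with $\phi$ via an isomorphism of sources. Let $\pi:\tilde{C}_k(\cF^k_0)=\KK\langle L\cap\tilde{\cF}^k_0\rangle\oplus\KK\langle\tilde L^{-}\rangle\twoheadrightarrow\KK\langle\tilde L^{-}\rangle$ be the projection, and set $D:=\ker(\tilde{H}_k(\cF^k_0)\to\tilde{H}_k(\cF))$. The injectivity of $\tilde{H}_k(M_{L_0})\to\tilde{H}_k(\cF)$ asserted by \autoref{prop:lj} at $j=0$ forces $\KK\langle L\cap\tilde{\cF}^k_0\rangle\cap D=0$, so $\pi|_D$ is injective; the cardinality formula for $u_{k,0}$ in the same proposition gives $\dim\KK\langle\tilde L^{-}\rangle=\dim D$, hence $\pi|_D$ is an isomorphism. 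Given $c\in D$ with splitting $c=c_{\mathrm{in}}+c_{\mathrm{out}}$ along the above direct sum, the difference $c_{\mathrm{out}}-c=-c_{\mathrm{in}}$ lies in $\KK\langle L\rangle\subset H$, so under the identification above $\bar\psi(c_{\mathrm{out}})=[c]=\phi(c)$. Thus $\phi=\bar\psi\circ\pi|_D$ and $\rk\phi=\rk\bar\psi=m$.

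The main obstacle is the biconditional of the second paragraph: the ``only if'' direction is straightforward from the surjection $\tilde{C}_k(\cF)/H\twoheadrightarrow\tilde{C}_k(\cF)/\bigl(\partial\KK\langle K\setminus\{\sigma_i\}\rangle\oplus\KK\langle L\rangle\bigr)$, but the ``if'' direction requires a transversality argument to match a given independent family $\{\tau_j\}\subset\KK\langle\tilde L^{-}\rangle$ with a suitable $\{\sigma_i\}\subset K^{+}$ so that the swapped pair is genuinely acyclic and therefore realizes the minimum value $b_{k,\ell_k}-m(k+2)$.
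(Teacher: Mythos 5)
Your argument is correct and follows essentially the same route as the paper: you fix a minimal acyclic pair, identify the weight-$0$ simplices outside $L$ with $\ker(\tilde{H}_k(\cF^k_0)\to\tilde{H}_k(\cF))$ and the weight-$(k+2)$ simplices of $K$ with $\ker(\tilde{H}_k(\cF^{k+1}_{k+1})\to\tilde{H}_k(\cF))$, and count the admissible swaps via \autoref{prop:less-size}; your chain-level decompositions (the quotient by $H$ and the projection $\pi|_D$) are just an explicit reformulation of the paper's identifications through the relative groups $H_k(\cF^k_0,M_{L_0})$ and $H_{k+1}(M_K,M_{K_{k+1}})$ and their long exact sequences. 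The only points to tidy are the definition of $K^+$ (it should be the simplices of $K$ of weight exactly $k+2$, i.e.\ $K\setminus\tilde{\cF}^{k+1}_{k+1}$) and the compatibility $\bar\psi(c_{\mathrm{out}})=\phi(c)$, which needs the short extra observation that for $c\in D$ the $\KK\langle L\rangle$-component of $c$ in the decomposition $\tilde{C}_k(\cF)=\partial\KK\langle K_{k+1}\rangle\oplus\KK\langle L\rangle\oplus\partial\KK\langle K^{+}\rangle$ is a cycle mapping to $0$ in $\tilde{H}_k(\cF)$, hence vanishes by injectivity of $\tilde{H}_k(M_L)\to\tilde{H}_k(\cF)$.
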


\begin{proof}
Let us fix an arbitrary minimal acyclic pair $(K,L)$ of size~$\rk\partial_{k+1}$.
The set of $k$-simplices of weight~$0$ not in~$L$ is a natural basis
of $H_k(\cF^k_0,M_{L_0})$. In the same way, the set of $(k+1)$-simplices of weight~$k+2$ in~$K$ is a natural basis
of $H_k(M_K,M_{K_{k+1}})$. We are going to use long exact sequences of pairs for $(\cF^k_0,M_{L_0})$. For the first one:
\[
\begin{tikzcd}[column sep=7pt]
H_{k+1}(\cF^k_0,M_{L_0})\ar[r]&
H_{k}(M_{L_0})\ar[r]&
H_{k}(\cF^k_0)\ar[r]&
H_{k}(\cF^k_0,M_{L_0})\ar[r]&
H_{k-1}(M_{L_0})\ar[r,"\cong"]&[3pt]
H_{k-1}(\cF^k_0).\\[-25pt]
\rotatebox{90}{$=$}\\[-25pt]
0
\end{tikzcd}
\]
We obtain a short exact sequence. Recall that $H_{k}(M_{L_0})\to H_{k}(\cF)$ is injective and
its image coincides with the one of $H_{k}(\cF^k_0)\to H_{k}(\cF)$. 
We deduce a natural isomorphism 
\[
\coker(H_{k}(M_{L_0})\to H_{k}(\cF^k_0))\cong\ker(H_{k}(\cF^k_0)\to H_{k}(\cF)).
\] 
Hence $\ker(H_{k}(\cF^k_0)\to H_{k}(\cF))\to H_{k}(\cF^k_0,M_{L_0})$
is an isomorphism.

A similar argument shows that $H_{k+1}(K,K_{k+1})\to\ker(H_k(\cF^{k+1}_{k+1})\to H_k(\cF))$ is an isomorphism.
The result follows by \autoref{prop:less-size}.
\end{proof}

With the same ideas we can compute the maximal size of the exponents
and give a bound for its number.
Let us denote by 
\[
c_{k,i,j}:=\rk\left(\ker(\tilde{H}_k(\cF^k_j)\to\tilde{H}_k(\cF))\longrightarrow 
\ker(\tilde{H}_k(\cF^{k+1}_{i-1})\to\tilde{H}_k(\cF))\right),\quad i>j.
\]

\begin{prop}\label{prop:maxJordan1}
Assume that $\rk\partial_{k+1}>1$ and define $a_{k,0}=0$ if $\ell_k=1$. Then, $a_{k,\ell_k}$ is
equal to the maximum value $a$ such that $\exists i,j$, $j-i=a$, and $c_{k,i,j}\neq 0$.
Moreover the number of terms $a_{k,h}$ with the same value is at least
$\max\{a_{k,i,j}\mid i-j=a\}$.
\end{prop}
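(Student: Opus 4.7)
The plan is to extend the argument of \autoref{prop:maxJordan}, which handles the special extreme case $(i,j)=(k+2,0)$. The starting identity is $a_{k,\ell_k}=b_{k,\ell_k}-b_{k,\ell_k-1}$, so $a_{k,\ell_k}$ equals the gap between the minimum weight of an acyclic pair of size $\rk\partial_{k+1}$ and that of size $\rk\partial_{k+1}-1$. Fix a minimal acyclic pair $(K,L)$ of size $\rk\partial_{k+1}$. By \autoref{prop:less-size}, every acyclic pair of size $\rk\partial_{k+1}-1$ obtainable from $(K,L)$ corresponds to a matched pair $(\sigma,\tau)$ with $\sigma\in K$ and $\tau\in\tilde{\cF}^k\setminus L$, and its weight has decreased by $\omega(\sigma)-\omega(\tau)=i-j$. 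Hence $a_{k,\ell_k}$ is realized as the maximum value of $i-j$ over valid matched pairs of weights $(i,j)$.

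Next, I would identify the matched pairs with the data captured by $c_{k,i,j}$. Using the filtrations $K_i=K\cap\tilde{\cF}^{k+1}_i$ and $L_j=L\cap\tilde{\cF}^k_j$ supplied by \autoref{prop:minKL}, the argument of \autoref{prop:maxJordan} generalizes to yield natural isomorphisms
\[
H_{k+1}(M_K,M_{K_{i-1}})\cong\ker\bigl(\tilde{H}_k(\cF^{k+1}_{i-1})\to\tilde{H}_k(\cF)\bigr),
\]
\[
H_k(\cF^k_j,M_{L_j})\cong\ker\bigl(\tilde{H}_k(\cF^k_j)\to\tilde{H}_k(\cF)\bigr),
\]
whose left-hand sides have, as respective natural bases, the $(k+1)$-simplices of $K$ of weight $\geq i$ and the $k$-simplices of weight $\leq j$ not in $L$. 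Under these identifications the map $c_{k,i,j}$ takes the following explicit form: for $\tau$ a basis element of the second group, represent the associated class by a cycle $z_\tau=\tau+y\in C_k(\cF^k_j)$ with $y\in\KK\langle L_j\rangle$ whose class in $H_k(\cF^k_j)$ lies in $\ker(\tilde{H}_k(\cF^k_j)\to\tilde{H}_k(\cF))$, lift $z_\tau$ to $w_\tau\in\KK\langle K\rangle$ with $\partial w_\tau=z_\tau$, and take the image of $w_\tau$ in $H_{k+1}(M_K,M_{K_{i-1}})$. Comparing with the criterion of \autoref{prop:less-size}, the pair $(\sigma,\tau)$ is validly matched exactly when this image has non-zero $[\sigma]$-coefficient.

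Given this translation, both parts follow. For part~(1), a valid matched pair of weight gap $a=i-j$ exists if and only if $c_{k,i,j}\neq 0$; hence $a_{k,\ell_k}=\max\{i-j\mid c_{k,i,j}\neq 0\}$. For part~(2), if $c_{k,i_0,j_0}=m$ at a pair $(i_0,j_0)$ realizing the maximum gap $a$, then $m$ linearly independent classes in the image of $c_{k,i_0,j_0}$ furnish $m$ matched pairs that can be applied in succession, producing an acyclic pair of size $\rk\partial_{k+1}-m$ whose weight has dropped by $m\cdot a$. Consequently $b_{k,\ell_k-m}\leq b_{k,\ell_k}-m\cdot a$, and since the sequence $(a_{k,h})$ is non-decreasing with maximum value $a$, the $m$ topmost exponents $a_{k,\ell_k-m+1},\dots,a_{k,\ell_k}$ must all coincide with $a$.

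The principal obstacle is establishing the explicit description of the map in the second paragraph and the translation of the combinatorial non-triviality condition of \autoref{prop:less-size} into the statement that $c_{k,i,j}([\tau])$ has non-zero $[\sigma]$-coefficient. This requires a careful diagram chase using the long exact sequences of the triples involved (notably $(\cF,\cF^{k+1}_{i-1},\cF^k_j)$ on the simplicial side and $(M_K,M_{K_{i-1}},M_{L_j})$ on the acyclic-pair side), together with verifying well-definedness of the lifts $w_\tau$ modulo $\KK\langle K_{i-1}\rangle$. A secondary but delicate point in part~(2) is that after performing some of the $m$ reductions, the remaining matched pairs must still satisfy \autoref{prop:less-size}; the linear independence of the chosen classes in the image of $c_{k,i_0,j_0}$ is precisely what allows the induction to proceed.
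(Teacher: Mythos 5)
The paper itself does not write out a proof of this proposition (it is presented as following ``with the same ideas'' as \autoref{prop:maxJordan} and \autoref{prop:less-size}), and your outline does follow exactly that intended route: interpret $a_{k,\ell_k}=b_{k,\ell_k}-b_{k,\ell_k-1}$ as the weight drop between Fitting ideals, realize drops by swaps $(\sigma,\tau)$ via \autoref{prop:less-size}, and translate their existence into non-vanishing of the intrinsic numbers $c_{k,i,j}$ through the relative-homology identifications of \autoref{prop:maxJordan}. The lower-bound half of part (1) can indeed be completed along your lines: for a minimal pair, $\tilde C_k(\cF^k_j)=(\im\partial_{k+1}\cap\tilde C_k(\cF^k_j))\oplus\KK\langle L_j\rangle$, so a class witnessing $c_{k,i,j}\neq 0$ is represented by a boundary $z$ supported in $\cF^k_j$, its unique lift $w\in\KK\langle K\rangle$ must involve a simplex $\sigma$ of weight $\geq i$, and expanding $z$ over the $k$-simplices of weight $\leq j$ not in $L$ produces a $\tau$ whose swap with $\sigma$ is valid by \autoref{prop:less-size}; your part (2) bookkeeping (iterating $m$ independent classes and using that the $a_{k,h}$ are non-decreasing and bounded by $a$) is also sound, modulo the independence check you flag.

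The genuine gap is in the first paragraph: the sentence ``Hence $a_{k,\ell_k}$ is realized as the maximum value of $i-j$ over valid matched pairs'' only gives $a_{k,\ell_k}\geq\max\{i-j\mid c_{k,i,j}\neq 0\}$. The exponent $b_{k,\ell_k-1}$ is a minimum over \emph{all} acyclic pairs of size $\rk\partial_{k+1}-1$, and the corollary you invoke only says such a pair arises from \emph{some} maximal acyclic pair by a swap --- not from your fixed minimal $(K,L)$, and the completing maximal pair may be non-minimal with a larger swap gap compensating its extra weight. For a non-minimal maximal pair the set $L$ has no weight-filtered structure, so in the decomposition $\tau=\partial w+l$ the correction $l\in\KK\langle L\rangle$ can involve simplices of arbitrary weight; the cycle $\tau-l$ then need not lie in $\cF^k_{\omega(\tau)}$, and the non-vanishing of $c_{k,i,j}$ for $i-j\geq a_{k,\ell_k}$ does not follow. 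Closing the upper bound requires either proving that a minimal acyclic pair of size $\rk\partial_{k+1}-1$ can always be obtained from a \emph{minimal} maximal pair by a single swap, or redoing the analysis of \autoref{prop:kj}, \autoref{prop:lj} and \autoref{prop:minKL} directly for pairs of defect one; neither step is addressed in your proposal, and together with the admitted-but-unexecuted diagram chase this leaves the argument a plan rather than a proof.
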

In particular, the semisimple case can be characterized when this value equals~$1$.

\section{A reduction to even characters}\label{sec:even}
The purpose of this section is twofold. First we present a special type of characters called 
\emph{even characters} and then we show that the $\Phi_d$-torsion part of $H_*(A_\Gamma^\chi;\KK)$
can be described as the $(t+1)$-torsion part of an even character.

A character $\rho:G_\Gamma\to\bz$ such that $\{\rho(v)\mid v\in\V\}=\{1,2\}$ will be called
\emph{even character}.

\subsection{Even characters associated with a pair \texorpdfstring{$(\chi,d)$}{(chi,d)}}
\mbox{}

Let $\chi:G_\Gamma\to\bz$ be a surjective character. Given any $d>1$ one can associate an even 
character to $\chi$ and $d$ as follows $\rho_{\chi,d}:G_\Gamma\to\bz$, $\rho_{\chi,d}(v):=2^{\omega_d(v)}$.
Since $\chi$ is surjective and $d>1$, one has the following equality 
for the set of indices $\{\rho_{\chi,d}(v)\}_{v\in V}=\{1,2\}$. In particular, $\rho_{\chi,d}$ is 
surjective and one can consider the exponents $\n_{k,j}(2)$ for $\rho_{\chi,d}$. The following result
is straightforward.

\begin{cor}
The exponents $\n_{k,j}(d)$ for $\chi$ coincide with the exponents $\n_{k,j}(2)$ for $\rho_{\chi,d}$.
\end{cor}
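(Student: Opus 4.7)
The plan is to reduce the statement directly to the Proposition stated at the start of the subsection on the $(t-\lambda)$-primary part, which asserts that the exponents $\n_{k,j}(d)$ depend only on the restriction of the weight function $\omega_d$ to the vertex set $\V$. Once that reduction is in place, the corollary becomes a matter of checking that the vertex-weight function attached to the pair $(\chi,d)$ agrees with the one attached to the pair $(\rho_{\chi,d},2)$.

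First I would unfold the definitions on vertices. By the lemma $\omega_d(\sigma)=\#\{v\in\sigma\mid d \text{ divides } n_v\}$ specialised to a single vertex $v$, the weight $\omega_d^\chi(v)$ equals $1$ if $d\mid n_v$ and $0$ otherwise. For the even character $\rho_{\chi,d}$ together with the primitive second root of unity $\lambda=-1$, the corresponding weight $\omega_2^{\rho_{\chi,d}}(v)$ equals $1$ precisely when $2\mid\rho_{\chi,d}(v)=2^{\omega_d^\chi(v)}$, i.e.\ when $\omega_d^\chi(v)\geq 1$, which is again the condition $d\mid n_v$. Therefore $\omega_d^\chi|_\V=\omega_2^{\rho_{\chi,d}}|_\V$ as functions on the vertex set.

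Applying the cited Proposition twice, first to the pair $(\chi,d)$ with any primitive $d$-th root of unity, and then to the pair $(\rho_{\chi,d},2)$ with $\lambda=-1$, both sets of exponents are governed by the same vertex-weight function on $\V$ and therefore coincide termwise. In this way the conclusion is essentially immediate: the reduction encoded in the Proposition has already done the conceptual work, and no further manipulation of the boundary matrix, its Smith normal form, or the associated Fitting ideals is required.

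The only point worth checking is that $\rho_{\chi,d}$ really is a surjective character with $\{\rho_{\chi,d}(v)\mid v\in\V\}=\{1,2\}$, so that the exponents $\n_{k,j}(2)$ are well-defined for it; this is already noted immediately before the statement, since the surjectivity of $\chi$ combined with $d>1$ forces the two values $0$ and $1$ to both be taken by $\omega_d^\chi$ on $\V$. Beyond this bookkeeping there is no real obstacle, so I expect the argument to be a short, essentially definitional consequence of how $\rho_{\chi,d}$ is engineered, rather than one requiring genuine additional technical input.
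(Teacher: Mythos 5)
Your argument is correct and is exactly the route the paper intends: the paper labels this corollary ``straightforward'' precisely because the Proposition in the $(t-\lambda)$-primary part subsection reduces the exponents to the vertex weight function, and by construction $\omega_2^{\rho_{\chi,d}}(v)=1$ iff $2\mid 2^{\omega_d(v)}$ iff $d\mid n_v$, so the two weight functions on $\V$ agree. Nothing further is needed.
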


\subsection{The \texorpdfstring{$(t+1)$}{(t+1)}-primary part of an even character}
\mbox{}

In this section we fix a surjective \emph{weight map} $\omega:\V\to\{0,1\}$, defining
an even character $\rho:G_\Gamma\to\bz$, defined as $\rho(v):=2^{\omega(v)}$. We denote
also by $\omega$ its extension to $\cF$, where $\omega(\sigma):=\sum_{v\in\sigma}\omega(v)$.
The goal is to compute the $(t+1)$-primary part of $H_{k+1}(\bt^\rho;\KK)$. The double cover 
$\bt^{\rho_2}$ of $\bt$ defined by the even character $\rho$ detects $\ell_k$, the number of 
summands in the $(t+1)$-primary part as defined in~\eqref{eq:bs}.

\begin{thm}
\label{thm:lk}
The number $\ell_k$ coincides with 
$$
\dim\!{H}_{k+1}(\bt^{\rho_2};\mathbb{K}) -
\dim\tilde{H}_k(\cF)-\dim\tilde{C}_k(\cF)
-\ell_{k-1}\!=\!
\dim{H}_{k+1}^-(\bt^{\rho_2};\mathbb{K}) -
\dim\tilde{H}_k(\cF)
-\ell_{k-1},
$$
where ${H}_{k+1}^-(\bt^{\rho_2};\mathbb{K})$ is the anti-invariant part of the homology of $\bt^{\rho_2}$.
\end{thm}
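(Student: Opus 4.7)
The plan is to relate $H_\ast(\bt^{\rho_2};\KK)$ to $H_\ast(\bt^{\rho};\KK)$ through the short exact sequence
\[
0\longrightarrow C_\ast(\bt^{\rho})\xrightarrow{\,t^2-1\,} C_\ast(\bt^{\rho})\longrightarrow C_\ast(\bt^{\rho_2})\longrightarrow 0
\]
of $\KK[t^{\pm 1}]$-modules, arising from the identification $\bt^{\rho_2}\cong\bt^{\rho}/\langle t^2\rangle$. The associated long exact sequence in homology yields
\[
\dim H_{k+1}(\bt^{\rho_2};\KK)=\dim\coker\bigl(t^2-1\mid H_{k+1}(\bt^{\rho};\KK)\bigr)+\dim\ker\bigl(t^2-1\mid H_{k}(\bt^{\rho};\KK)\bigr),
\]
so it suffices to compute the kernel and cokernel of $t^2-1$ on each summand of the decomposition provided by \autoref{thm:ps}.

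Since $\rho$ is even and non-resonant, \ref{PS2} forces every eigenvalue of $t$ on the torsion of $H_\ast(A^{\rho}_\Gamma;\KK)$ to lie in $\{\pm 1\}$, so only three types of summands appear. On the free part $\KK[t^{\pm 1}]^{r_k}$, the operator $t^2-1$ is injective with cokernel $(\KK[t^{\pm 1}]/(t^2-1))^{r_k}$ of $\KK$-dimension $2r_k$. On the $(t-1)$-primary part, which by~\ref{PS4} equals $(\KK[t^{\pm 1}]/\langle t-1\rangle)^{\rk\partial_{k+1}}$ and carries the trivial $t$-action, $t^2-1$ acts as $0$, so both kernel and cokernel have dimension $\rk\partial_{k+1}$. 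On the $(t+1)$-primary part $\bigoplus_{j=1}^{\ell_k}\KK[t^{\pm 1}]/(t+1)^{a_{k,j}}$, the factor $t-1$ is a unit, hence $t^2-1=(t-1)(t+1)$ has kernel and cokernel both isomorphic to the common socle of $\KK$-dimension $\ell_k$. Summing these, together with the parallel analysis for $H_k(\bt^{\rho};\KK)$ (whose kernel term contributes $\rk\partial_k+\ell_{k-1}$), one obtains
\[
\dim H_{k+1}(\bt^{\rho_2};\KK)=2r_k+\rk\partial_{k+1}+\ell_k+\rk\partial_k+\ell_{k-1}.
\]

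To match the statement, I split $H_{k+1}(\bt^{\rho_2};\KK)=H_{k+1}^+\oplus H_{k+1}^-$ under the deck involution, which is legitimate since $\chr\KK=0$. A standard transfer argument gives $H_{k+1}^+\cong H_{k+1}(\bt;\KK)$, and minimality of $C_\ast(\bt)$ yields $\dim H_{k+1}^+=\dim\tilde C_k(\cF)=\#\tilde\cF^k$, which immediately proves the equivalence of the two displayed expressions in the theorem. Finally, plugging in $r_k=\dim\tilde H_k(\cF)=\#\tilde\cF^k-\rk\partial_{k+1}-\rk\partial_k$ (from \ref{PS1} and the rank--nullity formula applied to $\partial_k,\partial_{k+1}$) collapses the previous formula to $\dim H_{k+1}^-=r_k+\ell_k+\ell_{k-1}$, from which the asserted expression for $\ell_k$ is immediate. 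The main care-point is bookkeeping: one must verify that \ref{PS2} really restricts the eigenvalues to $\pm 1$ for an even character, and that the $\pm$-splitting of $H_\ast(\bt^{\rho_2};\KK)$ is compatible with the primary decomposition tracked through the long exact sequence; everything else is linear algebra.
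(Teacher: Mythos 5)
Your proof is correct, but it is organized around a different key mechanism than the paper's. The paper works eigenvalue by eigenvalue: it tensors with $\Lambda_{\pm}:=\KK[t^{\pm 1}]/\langle t\mp 1\rangle$, reads off $\ell_k+r_k$ as $\dim H_{k+1}(C_*(\bt^\rho))\otimes_{\Lambda}\Lambda_-$ directly from the structure decomposition, uses the Universal Coefficient Theorem over $\KK[t^{\pm 1}]$ to convert this into $\dim H_{k+1}(C_*(\bt^\rho)\otimes_\Lambda\Lambda_-)$ minus a $\tor_1$ correction equal to $\ell_{k-1}$, and finally identifies the $\Lambda_-$- and $\Lambda_+$-reduced homologies with the anti-invariant and invariant parts of $H_{k+1}(\bt^{\rho_2};\KK)$, the invariant part being $\tilde C_k(\cF)$ by minimality of $C_*(\bt)$. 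You instead use the Milnor-type sequence $0\to C_*(\bt^\rho)\xrightarrow{t^2-1}C_*(\bt^\rho)\to C_*(\bt^{\rho_2})\to 0$ and compute $\ker$ and $\coker$ of $t^2-1$ on every summand, which forces you to carry the free part and the $(t-1)$-primary part (hence \ref{PS4}) through the computation and cancel them at the end by rank--nullity; in the paper's route those terms never appear, since $\otimes\,\Lambda_-$ annihilates the $(t-1)$-torsion outright, and the UCT $\tor$ term plays exactly the role of your kernel contribution in degree $k$. What your version buys is an argument using only the long exact sequence plus the structure theorem (no UCT over $\Lambda$), together with the by-product formula $\dim H_{k+1}(\bt^{\rho_2};\KK)=2r_k+\rk\partial_{k+1}+\rk\partial_k+\ell_k+\ell_{k-1}$; what it costs is the extra dependence on \ref{PS4} and the heavier bookkeeping you acknowledge. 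Two cosmetic remarks: on a summand $\KK[t^{\pm 1}]/\langle (t+1)^{a}\rangle$ the cokernel of multiplication by $t+1$ is the top quotient $\KK[t^{\pm 1}]/\langle t+1\rangle$ rather than the socle (same dimension, so harmless), and your transfer identification $H^{+}_{k+1}(\bt^{\rho_2};\KK)\cong H_{k+1}(\bt;\KK)=\tilde C_k(\cF)$ is precisely the paper's $\Lambda_+$-reduction step, so the two arguments coincide there.
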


\begin{proof}
We recover ideas from Sakuma's formula~\cite{Sakuma-homology} for the homology of unramified coverings.
The character $\rho_2:G_\Gamma\to\bz/2$, where $\rho_2(g):=\rho(g)\mod 2$ determines an unramified 
double covering of~$\bt$ denoted $\tilde{\rho}_2:\bt^{\rho_2}\to\bt$. For the sake of brevity we denote
$\Lambda=\mathbb{K}[t^{\pm 1}]$, $\Lambda_+:=\Lambda/\langle t-1\rangle\cong\mathbb{K}$ and 
$\Lambda_-:=\Lambda/\langle t+1\rangle\cong\mathbb{K}$. Recall that $\Lambda\otimes\Lambda_-=\Lambda_-$ 
and that if $\lambda\in\mathbb{K}$ and $n>0$ then 
$\Lambda_-\otimes \Lambda/\langle (t-\lambda)^n\rangle=0$ if $\lambda\neq -1$ 
and $\Lambda_-\otimes \Lambda/\langle (t+1)^n\rangle=\Lambda_-$: 
\begin{gather*}
\ell_k
=
\dim{H}_{k+1}(\bt^{\rho_2};\mathbb{K})\otimes_\Lambda \Lambda_- -r_k.
\end{gather*}
From~\ref{PS1}, $r_k=\dim\tilde{H}_k(\cF)$ and by definition 
${H}_{k+1}(\bt^{\rho_2};\mathbb{K})={H}_{k+1}(C_*(\bt^{\rho_2}))$.
By the Universal Coefficient Theorem
\begin{gather*}
\dim{H}_{k+1}(C_*(\bt^{\rho_2}))\otimes_\Lambda \Lambda_- =
\dim{H}_{k+1}(C_*(\bt^{\rho_2})\otimes_\Lambda \Lambda_-) - \dim\tor_1({H}_{k}(C_*(\bt^{\rho_2})),\Lambda_-).
\end{gather*}
By the properties of the $\tor$ functor $\dim\tor_1({H}_{k}(C_*(\bt^{\rho_2})),\Lambda_-)=\ell_{k-1}$.
The homology of $\bt^{\rho_2}$ decomposes in the invariant and anti-invariant part for the monodromy of the covering:
\begin{gather*}
\dim{H}_{k+1}(C_*(\bt^{\rho_2})\otimes_\Lambda \Lambda_-)=
\dim{H}_{k+1}(\bt^{\rho_2};\mathbb{K}) -
\dim{H}_{k+1}(C_*(\bt^{\rho_2})\otimes_\Lambda \Lambda_+),
\end{gather*}
where ${H}_{k+1}(C_*(\bt^{\rho_2})\otimes_\Lambda \Lambda_+)=\tilde{C}_k(\cF)$.
Summarizing,
\begin{gather*}
\ell_k=\dim{H}_{k+1}(\bt^{\rho_2};\mathbb{K}) -
\dim\tilde{H}_k(\cF)-\dim\tilde{C}_k(\cF)
-\ell_{k-1}.
\qedhere
\end{gather*}
\end{proof}

\begin{rem}
From the computational point of view ${H}_{*+1}^-(\bt^{\rho_2};\mathbb{K})$, is the homology
of the complex $C_*(\cF^\rho,\KK)$ after the evaluation $t=-1$, i.e., tensoring by $\Lambda_-$.
\end{rem}

\subsection{An application to even characters}
\mbox{}

The above results for $H_1(\bt^\rho,\KK)$ for an even character~$\rho$ give the following.
They depend only on $\Gamma$. Let us denote by $\V_0=\cF_0^0$ the set of vertices of weight~$0$.
We denote also by $\Gamma_0=\cF_0^1$ (resp. $\Gamma_1=\cF_1^1$) the graph whose set of vertices is $\V$
and the set of edges consist of the edges of $\Gamma$ with weight $0$ (resp. $\leq 1$).

\begin{prop}
The dimension of the $(t+1)$-primary part of $H_1(\bt^\rho,\KK)$ equals
\begin{gather*}
\dim\tilde{H}_0(\Gamma_{0})+\dim\tilde{H}_0(\Gamma_{0})-2\dim\tilde{H}_0(\Gamma)+
\dim\tilde{H}_{0}(\Gamma,V_{0})
-\dim\tilde{H}_{0}(V,V_0).
\end{gather*}
The number of non-semisimple factors (exponent~$2$) is the rank of the map
\[
\begin{tikzcd}
\ker(\tilde{H}_0(\V_0)\to\tilde{H}_0(\Gamma))\ar[r]&\tilde{H}_0(\Gamma_{1}).
\end{tikzcd}
\]
\end{prop}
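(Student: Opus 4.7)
The plan is to specialize \autoref{thm:bkl} and \autoref{prop:maxJordan} to $k=0$ and $d=2$ (the value of $d$ producing the $(t+1)$-primary part for an even character with image in $\{1,2\}$). By \autoref{cor:Jordan_max} every Jordan block for $k=0$ has size at most $k+2=2$, so only $\n_{0,1}(2)$ and $\n_{0,2}(2)$ can be nonzero and the total dimension of the $(t+1)$-primary part equals
\[
\n_{0,1}(2)+2\n_{0,2}(2)=\sum_{j\geq 1}j\cdot\n_{0,j}(2)=b_{0,\ell_{0}},
\]
which is exactly the quantity computed by \autoref{thm:bkl}.

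Under the identifications $\cF^{1}_{0}=\Gamma_{0}$, $\cF^{1}_{1}=\Gamma_{1}$, $\cF^{0}_{0}=V_{0}$, $\cF^{0}=V$, and $\tilde{H}_{0}(\cF)=\tilde{H}_{0}(\Gamma)$ (since $\tilde{H}_{0}$ is unaffected by simplices of dimension $\geq 2$), the formula of \autoref{thm:bkl} specializes at $k=0$ to
\[
b_{0,\ell_{0}}=\dim\tilde{H}_{0}(\Gamma_{0})+\dim\tilde{H}_{0}(\Gamma_{1})-2\dim\tilde{H}_{0}(\Gamma)+\dim\tilde{H}_{1}(\Gamma,V_{0})-\dim\tilde{H}_{1}(\Gamma,V).
\]

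The one step that requires attention is rewriting the two relative $\tilde{H}_{1}$-terms as the $\tilde{H}_{0}$-terms appearing in the statement. For a $0$-dimensional subcomplex $A\subset V$, the long exact sequence of the pair $(\Gamma,A)$ (using $\tilde{H}_{1}(A)=0$ and $\tilde{H}_{-1}(A)=0$) yields
\[
\dim\tilde{H}_{1}(\Gamma,A)=\dim\tilde{H}_{1}(\Gamma)+\dim\tilde{H}_{0}(A)-\dim\tilde{H}_{0}(\Gamma)+\dim\tilde{H}_{0}(\Gamma,A).
\]
Applied with $A=V$ and with $A=V_{0}$, noting that $\tilde{H}_{0}(\Gamma,V)=0$ because $V$ meets every component of $\Gamma$, and that the long exact sequence of the pair $(V,V_{0})$ of $0$-complexes produces $\dim\tilde{H}_{0}(V)-\dim\tilde{H}_{0}(V_{0})=|V|-|V_{0}|=\dim\tilde{H}_{0}(V,V_{0})$, subtraction gives
\[
\dim\tilde{H}_{1}(\Gamma,V_{0})-\dim\tilde{H}_{1}(\Gamma,V)=\dim\tilde{H}_{0}(\Gamma,V_{0})-\dim\tilde{H}_{0}(V,V_{0}),
\]
which yields the dimension formula of the proposition.

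For the count of non-semisimple factors, i.e.\ blocks of the maximal size $k+2=2$, I will invoke \autoref{prop:maxJordan} at $k=0$. Under the same identifications the count equals the rank of the natural map
\[
\ker\bigl(\tilde{H}_{0}(V_{0})\to\tilde{H}_{0}(\Gamma)\bigr)\longrightarrow\ker\bigl(\tilde{H}_{0}(\Gamma_{1})\to\tilde{H}_{0}(\Gamma)\bigr).
\]
The composite $\tilde{H}_{0}(V_{0})\to\tilde{H}_{0}(\Gamma_{1})\to\tilde{H}_{0}(\Gamma)$ vanishes on the source by functoriality of the inclusions $V_{0}\hookrightarrow\Gamma_{1}\hookrightarrow\Gamma$, so the image automatically lies in the target kernel and restricting or not the codomain does not change the rank. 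Hence this coincides with the rank of $\ker(\tilde{H}_{0}(V_{0})\to\tilde{H}_{0}(\Gamma))\to\tilde{H}_{0}(\Gamma_{1})$, as claimed. The only genuinely delicate step in the argument is the $\tilde{H}_{1}$-to-$\tilde{H}_{0}$ translation above; everything else is direct specialization of the general machinery already established.
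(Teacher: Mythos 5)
Your argument is correct and is essentially the paper's own: the proposition is presented as an immediate specialization of \autoref{thm:bkl} and \autoref{prop:maxJordan} to $k=0$, $d=2$ (with $\cF^1_0=\Gamma_0$, $\cF^1_1=\Gamma_1$, $\cF^0_0=\V_0$), and your long-exact-sequence conversion of $\dim\tilde{H}_1(\Gamma,\V_0)-\dim\tilde{H}_1(\Gamma,\V)$ into $\dim\tilde{H}_0(\Gamma,\V_0)-\dim\tilde{H}_0(\V,\V_0)$, together with the observation that restricting the codomain to $\ker(\tilde{H}_0(\Gamma_1)\to\tilde{H}_0(\Gamma))$ does not change the rank, is exactly the intended bookkeeping. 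You also correctly read the repeated $\dim\tilde{H}_0(\Gamma_0)$ in the statement as a typo for $\dim\tilde{H}_0(\Gamma_0)+\dim\tilde{H}_0(\Gamma_1)$, as confirmed by \autoref{cor:H1}.
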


\begin{cor}
\label{cor:H1}
Assume $\Gamma$ is connected. Then the dimension of the $(t+1)$-primary part of $H_1(\bt^\rho,\KK)$ 
equals
\[
\dim{H}_0(\Gamma_0)+\dim{H}_0(\Gamma_1)-2-\omega(\V).\quad
\]
and the number of non-semisimple blocks is $\rk(\tilde{H}_0(\V_0)\to\tilde{H}_0(\Gamma_{1}))$.
\end{cor}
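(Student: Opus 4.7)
The plan is to specialize the preceding proposition to the case when $\Gamma$ is connected, identifying each of its five summands concretely. First, connectedness immediately gives $\dim\tilde{H}_0(\Gamma)=0$, so the term $-2\dim\tilde{H}_0(\Gamma)$ drops out. For the two remaining reduced $H_0$ contributions, I would pass to unreduced homology via $\dim\tilde{H}_0(X) = \dim H_0(X) - 1$, valid because $\Gamma_0$ and $\Gamma_1$ are both non-empty graphs on the full vertex set $\V$. This substitution is exactly what produces the $-2$ in the corollary's formula.

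Next, I would compute the two relative terms separately. Since $\rho$ is an even surjective character, it must take the value $1$ somewhere, so $\V_0\neq\emptyset$, which allows me to identify $\tilde{H}_0(\Gamma,\V_0)$ with $H_0(\Gamma,\V_0)$. The tail of the long exact sequence of the pair $(\Gamma,\V_0)$,
\[
H_0(\V_0)\longrightarrow H_0(\Gamma)\longrightarrow H_0(\Gamma,\V_0)\longrightarrow 0,
\]
combined with $\Gamma$ being connected and $\V_0$ non-empty, shows the first arrow is surjective onto $H_0(\Gamma)=\KK$, hence $\tilde{H}_0(\Gamma,\V_0)=0$. For $\tilde{H}_0(\V,\V_0)$, the relative chain complex of a $0$-dimensional pair is concentrated in degree $0$, with dimension $\#\V-\#\V_0 = \#\{v\in\V\mid\omega(v)=1\}$; since $\omega$ takes values in $\{0,1\}$, this count is precisely $\omega(\V)=\sum_{v\in\V}\omega(v)$.

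Plugging these four identifications into the formula of the preceding proposition yields $\dim H_0(\Gamma_0)+\dim H_0(\Gamma_1)-2-\omega(\V)$, as required. For the count of non-semisimple blocks, the preceding proposition describes it as the rank of the composite $\ker\bigl(\tilde{H}_0(\V_0)\to\tilde{H}_0(\Gamma)\bigr)\to\tilde{H}_0(\Gamma_1)$; since $\tilde{H}_0(\Gamma)=0$ under the connectedness hypothesis, the kernel equals all of $\tilde{H}_0(\V_0)$, and the rank reduces to $\rk\bigl(\tilde{H}_0(\V_0)\to\tilde{H}_0(\Gamma_1)\bigr)$. There is no real obstacle here beyond carefully distinguishing reduced from unreduced $H_0$ and verifying that $\V_0$ (and thus each subspace in play) is non-empty; the corollary is essentially a bookkeeping specialization of the proposition.
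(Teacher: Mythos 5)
Your specialization is correct and is exactly how the paper intends the corollary to follow: the paper gives no separate proof, treating it as an immediate consequence of the preceding proposition, and your identifications ($\tilde{H}_0(\Gamma)=0$, the reduced-to-unreduced shift giving $-2$, $\tilde{H}_0(\Gamma,\V_0)=0$ since $\V_0\neq\emptyset$ by surjectivity of $\omega$, and $\dim\tilde{H}_0(\V,\V_0)=\omega(\V)$, plus the kernel collapsing to all of $\tilde{H}_0(\V_0)$ for the block count) are precisely the intended bookkeeping. Note only that you silently corrected the proposition's typo (its displayed formula repeats $\tilde{H}_0(\Gamma_0)$ where one term should be $\tilde{H}_0(\Gamma_1)$), which is the right reading and is confirmed by the paper's examples.
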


\section{Main result}
\label{sec:main}
Let 
\[
H_{k+1}(A^\chi_\Gamma;\KK)\cong\KK[t^{\pm 1}]^{\n_k}
\bigoplus_{d\in\mathbb{Z}_{>0}}\bigoplus_{j>0}
\left(\KK[t^{\pm 1}]/\langle\Phi_d^j\rangle\right)^{\n_{k,j}(d)}
\]
be the decomposition of this finitely generated $\KK[t^{\pm 1}]$-module.
If one assumes certain acyclicity conditions on the flag complex $\cF$, then  
a more concise description of $H_{k+1}(A^\chi_\Gamma;\KK)$ can be given. 
Consider $d>1$ and the weighted graph obtained from $\Gamma$ by attaching the weight
$$\omega_d(v)=\begin{cases} 1 & \textrm{ if } d|n_v\\ 0 & \textrm{ otherwise,}\end{cases}$$ 
as defined in \autoref{sec:torsion}. This weight defines a filtration in the flag complex
$\cF_j^k$ as defined in~\eqref{eq:Fdef}. Note that the filtration $\cF_j^k$ depends on $d$, 
we drop $d$ to avoid cumbersome notation.

We denote by $\tilde h_i(\cF^m_j):=\dim_\KK \tilde H_i(\cF^m_j;\KK)$ and by 
$\omega(\tilde \cF^k)$ the weight of the set of $k$-simplices of $\cF$ as defined in
\autoref{sec:fitting}.

\begin{thm}
\label{thm:main}
Assume $\cF$ is $k$-acyclic, then
\[
H_{k+1}(A^\chi_\Gamma;\KK)\cong\KK[t^{\pm 1}]^{\tilde h_k(\cF)}\oplus
\left(\frac{\KK[t^{\pm 1}]}{\langle (t-1)\rangle}\right)^{\rk \partial_{k+1}}
\bigoplus_{d\in\mathbb{Z}_{>0}}
\left(
\bigoplus_{j=1}^{k+2}
\left(\frac{\KK[t^{\pm 1}]}{\langle\Phi_d^j\rangle}\right)^{\n_{k,j}(d)}
\right),
\]
where
$$
\begin{aligned}
\sum_{j=1}^{k+2} j\cdot \n_{k,j}(d)=&
\sum_{i=k}^{k+1} \sum_{j=0}^i\tilde h_{k}(\cF^{i}_j)-(k+1)\tilde{h}_k(\cF^k),\\
\ell_k(d)=\sum_{j=1}^{k+2} \n_{k,j}(d)= &
\sum_{i=0}^{k}(-1)^{k-i}\dim{H}_{i+1}^-(\bt^{\rho_2};\mathbb{K})
\\
\n_{k,k+2}(d) =& \rk\left( \tilde{H}_k(\cF^k_0)\to \tilde{H}_k(\cF^{k+1}_{k+1})\right).
\end{aligned}
$$
The maximal $h$ such that $r_{k,h}>0$ is 
\[
\max\left\{i-j+1\mid\rk\left(\tilde{H}_k(\cF^k_j)\longrightarrow 
\tilde{H}_k(\cF^{k+1}_{i-1})\right)>0\right\}.
\]

This determines inductively the $\KK[t^{\pm 1}]$-module structure of $H_1(A^\chi_\Gamma;\KK)$
and $H_2(A^\chi_\Gamma;\KK)$ completely.
\end{thm}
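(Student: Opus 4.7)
The plan is to extract this theorem as a direct specialization of the general results established in the preceding sections under the additional hypothesis that $\cF$ is $k$-acyclic, i.e., $\tilde H_i(\cF;\KK) = 0$ for all $i \le k$. The overall shape of the decomposition is already \autoref{thm:ps}; \autoref{cor:Jordan_max} forces the exponents to satisfy $j \le k+2$, item \ref{PS1} identifies the free rank with $\tilde h_k(\cF)$, and \ref{PS4} gives the $(t-1)$-primary part as $\rk\partial_{k+1}$ copies of $\KK[t^{\pm 1}]/\langle t-1\rangle$, none of which requires acyclicity. So the substantive work is to simplify the formulas describing the $\Phi_d$-primary part for $d > 1$.

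For the weighted sum I invoke \autoref{thm:bkl}. Under $k$-acyclicity, $\tilde h_k(\cF) = 0$, and since homology below the top cell dimension is insensitive to higher cells, also $\tilde h_k(\cF^{k+1}) = 0$. The long exact sequence of the pair $(\cF^{k+1}, \cF^k_j)$ therefore collapses to a short exact sequence giving $\tilde h_{k+1}(\cF^{k+1}, \cF^k_j) = \tilde h_{k+1}(\cF^{k+1}) + \tilde h_k(\cF^k_j)$, and analogously for the pair $(\cF^{k+1}, \cF^k)$. Substituting these into \autoref{thm:bkl}, the $\tilde h_{k+1}(\cF^{k+1})$ contributions cancel between the sum over $j$ and the $-(k+1)\tilde h_{k+1}(\cF^{k+1}, \cF^k)$ term, and the $-(k+2)\tilde h_k(\cF)$ summand vanishes, leaving exactly $\sum_{i=k}^{k+1}\sum_{j=0}^i \tilde h_k(\cF^i_j) - (k+1)\tilde h_k(\cF^k)$ as required.

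For $\ell_k(d)$ I iterate \autoref{thm:lk}, which under $k$-acyclicity reads $\ell_k = \dim H_{k+1}^-(\bt^{\rho_2};\KK) - \ell_{k-1}$ at every step, since the $\dim \tilde H_k(\cF)$ correction vanishes throughout the induction. Telescoping down to $\ell_{-1} = 0$ (as $H_0$ is free) yields the stated alternating sum. The formula for $r_{k,k+2}(d)$ is an immediate transcription of \autoref{prop:maxJordan}: acyclicity collapses each kernel of the map to $\tilde H_k(\cF)$ into the full $\tilde H_k$ of the relevant subcomplex. The formula for the maximal $h$ with $r_{k,h} > 0$ follows from the same simplification applied to \autoref{prop:maxJordan1}.

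The final clause on $H_1$ and $H_2$ follows because for $k \le 1$ there are at most $k+2 \le 3$ admissible values of $j$, hence at most three unknowns $r_{k,j}(d)$; the three pieces of data (weighted sum, total count $\ell_k(d)$, and top coefficient $r_{k,k+2}(d)$) form a triangular linear system that uniquely determines all the exponents. The main technical obstacle will be the bookkeeping in the reduction of the formula from \autoref{thm:bkl}: one must verify that the substitutions coming from the long exact sequences of pairs interact correctly with the $-(k+1)(\cdot)$ and $-(k+2)(\cdot)$ subtractions already present, so that exactly the announced combination of absolute homologies of the filtered subcomplexes survives in the final expression.
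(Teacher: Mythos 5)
Your proposal is correct and takes essentially the same route as the paper's proof: it specializes \autoref{thm:bkl} via the long exact sequences of pairs together with $\tilde h_k(\cF)=\tilde h_k(\cF^{k+1})=0$, iterates \autoref{thm:lk}, and reads off \autoref{prop:maxJordan} and \autoref{prop:maxJordan1} under acyclicity, with your triangular linear system supplying the final clause about $H_1$ and $H_2$. The only blemish is the parenthetical justification of $\ell_{-1}=0$: $H_0(\bt^{\rho};\KK)\cong\KK[t^{\pm 1}]/\langle t-1\rangle$ is not free, but it carries no $(t+1)$-torsion, which is the actual reason the base case of the telescoping holds.
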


\begin{proof}
The first formula follows from \autoref{thm:bkl} since $\tilde h_k(\cF)=0$ by hypothesis, 
$\tilde h_k(\cF^{k+1},\cF_j^k)=\tilde h_{k-1}(\cF_j^k)$ using the exact sequence of pairs
and the acyclicity of $\cF$, and $\sum_{j=0}^k\dim\tilde{C}_{k}(\cF^k,\cF^k_j)=\omega(\tilde \cF^k)$.
The second formula follows \autoref{thm:lk}
and the third equality can be obtained from \autoref{prop:maxJordan} and the acyclicity of~$\cF$.
The maximality statement follows \autoref{prop:maxJordan1}.
\end{proof}

\section{Examples}
\label{sec:examples}

In this section we present computations for a variety of examples. An interested reader may 
apply the results to other examples and check the computations below using the notebook in
\url{https://github.com/enriqueartal/ArtinKernels}, and using 
\texttt{Sagemath}~\cite{sage90} or \texttt{Binder}~\cite{binder}.

\begin{example}
Let $\Gamma$ be the graph in \autoref{fig:tree}. 
The labels on the vertices correspond to the values $\rho_{\chi,d}(v)$.
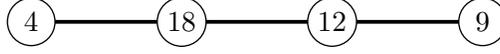
\begin{figure}[ht]
\begin{center}
\begin{tikzpicture}
\Vertex[style={minimum size=1.0cm,shape=circle},LabelOut=false,L=\hbox{$18$},x=2.0cm,y=5.0cm]{v0}
\Vertex[style={minimum size=1.0cm,shape=circle},LabelOut=false,L=\hbox{$4$},x=0.0cm,y=5.0cm]{v1}
\Vertex[style={minimum size=1.0cm,shape=circle},LabelOut=false,L=\hbox{$12$},x=4.0cm,y=5.0cm]{v2}
\Vertex[style={minimum size=1.0cm,shape=circle},LabelOut=false,L=\hbox{$9$},x=6.0cm,y=5.0cm]{v3}
\Edge[lw=0.05cm](v0)(v1)
\Edge[lw=0.05cm](v0)(v2)
\Edge[lw=0.05cm](v2)(v3)
\end{tikzpicture}
\end{center}
\caption{A linear tree}
\label{fig:tree}
\end{figure}
The structure of $H_1(A^\chi_\Gamma;\mathbb{C})$ is:
$$
H_1(A^\chi_\Gamma;\KK)=
\left(\frac{\KK[t^{\pm 1}]}{\Phi_1}\right)^{3}\oplus
\left(\frac{\KK[t^{\pm 1}]}{\Phi_2\Phi_3}\right)^{2}\oplus
\left(\frac{\KK[t^{\pm 1}]}{\Phi_6^2\Phi_4\Phi_9\Phi_{12}\Phi_{18}}\right).
$$
\begin{figure}[ht]
\begin{center}
\subfigure[$d=2,3$\label{fig:tree2a}]{
\begin{tikzpicture}[scale=.75]
\node [draw,fill=black,circle] (a) at (0,0) {};
\node [draw,fill=black,circle] (b) at (1,0) {};
\node [draw,fill=black,circle] (c) at (2,0) {};
\node [draw,circle] (d) at (3,0) {};
\draw[] (a) edge (b);
\draw[] (b) edge (c);
\draw[] (c) edge node[above,pos=.5] {$e_1$} (d);
\node[above right] at (d) {$v_1$};
\end{tikzpicture}}
\hfil
\subfigure[$d=4,9$\label{fig:tree2b}]{
\begin{tikzpicture}[scale=.75]
\node [draw,fill=black,circle] (a) at (0,0) {};
\node [draw,circle] (b) at (1,0) {};
\node [draw,fill=black,circle] (c) at (2,0) {};
\node [draw,circle] (d) at (3,0) {};
\draw[] (a) edge (b);
\draw[] (b) edge (c);
\draw[] (c) edge (d);
\end{tikzpicture}}
\hfil
\subfigure[$d=6$\label{fig:tree2c}]{
\begin{tikzpicture}[scale=.75]
\node [draw,circle] (a) at (0,0) {};
\node [draw,fill=black,circle] (b) at (1,0) {};
\node [draw,fill=black,circle] (c) at (2,0) {};
\node [draw,circle] (d) at (3,0) {};
\draw[] (a) edge node[above,pos=.5] {$e_2$} (b);
\draw[] (b) edge (c);
\draw[] (c) edge node[above,pos=.5] {$e_3$} (d);
\node[above left] at (a) {$v_2$};
\node[above right] at (d) {$v_3$};
\end{tikzpicture}}
\hfil
\subfigure[$d=12,18$\label{fig:tree2d}]{
\begin{tikzpicture}[scale=.75]
\node [draw,circle] (a) at (0,0) {};
\node [draw,fill=black,circle] (b) at (1,0) {};
\node [draw,circle] (c) at (2,0) {};
\node [draw,circle] (d) at (3,0) {};
\draw[] (a) edge (b);
\draw[] (b) edge (c);
\draw[] (c) edge node[above,pos=.5] {$e_4$} (d);
\end{tikzpicture}}
\end{center}
\caption{Weighted graphs. Weight-one vertices in black.}
\label{fig:tree2}
\end{figure}
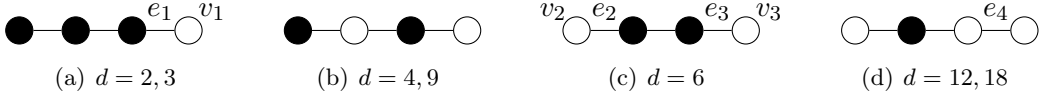
Let us study each one of the $\Phi_d$-primary factors, for $d=1,2,3,4,6,9,12,18$.
\begin{itemize}

\item $d=1$: Recall that $\n_{0,1}(1)=\rk\partial_1$. The map
$\partial_1$ is injective as $\Gamma$ is a tree; hence
$\n_{0,1}(1)=3$.

\item $d=2,3$: The primary parts for $\Phi_2$ and 
$\Phi_3$ are obtained considering the associated even character $\rho$
as in Figure~\ref{fig:tree2a}, up to automorphism of the graph. We have:
\[
\Gamma_0=V,\quad \Gamma_1=V\cup\text e_1,\quad V_0=\{v_1\}.
\]
The dimension of the $(t+1)$-primary part for this~$\rho$ is 
$\tilde{h_0}(\Gamma_1)+\tilde{h_0}(\Gamma_0)-\omega(V)=2+3-3=2$, 
according to \autoref{cor:H1}. 
Since $\V_0$ is connected this part is semisimple and hence the primary parts for $\Phi_2$ and 
$\Phi_3$ are $(\KK[t^{\pm 1}]/\Phi_2)^2$ and $(\KK[t^{\pm 1}]/\Phi_3)^2$ respectively.

\item $d=4,9$: The associated even character~$\rho$ is described in
Figure~\ref{fig:tree2b}. We have:
\[
\Gamma_0=V,\quad \Gamma_1=\Gamma.
\]
The dimension of the $(t+1)$-primary part for~$\rho$ is $4+1-2-2=1$.
This gives the result for $\Phi_4$ and~$\Phi_9$.

\item $d=6$: The associated even character~$\rho$ is described in
Figure~\ref{fig:tree2c}. We have:
\[
\Gamma_0=V,\quad \Gamma_1=e_2\cup e_3,\quad V_0=\{v_2,v_3\}.
\]
The dimension of the corresponding primary part is $4+2-2-2=2$. In this case $\V_0$ consists of the end vertices
and the map $\tilde{H}_0(\V_0)\to\tilde{H}_0(\Gamma_{1})$ is bijective. Hence the primary part for $\Phi_6$ is 
$\KK[t^{\pm 1}]/\Phi_6^2$.

\item $d=12,18$: The associated even character~$\rho$ is described in
Figure~\ref{fig:tree2d}. We have:
\[
\Gamma_0=V\cup e_4,\quad \Gamma_1=\Gamma.
\]
The dimension of the corresponding primary part is $3+1-2-1=1$. The computation ends.
\end{itemize}
\end{example}

\begin{example}
This example shows that the non-vanishing restrictions $\chi(v)=n_v\neq 0$ 
on the character $\chi$ are necessary.
Let us consider the same graph in \autoref{fig:tree} where the values of $n_v$'s are $(1,0,2,2)$. The matrices for $\partial^\chi_j$ are
\[
j=1:
\begin{pmatrix}
t-1&0&t^2-1&t^2-1 
\end{pmatrix}\quad 
j=2:
\begin{pmatrix}
0&0&0\\
t-1&1-t^2&0\\
0&0&1-t^2\\
0&0&t^2-1
\end{pmatrix}
\]
Hence $H_0(\bt^\chi;\KK)=\KK[t^{\pm 1}]/\langle t-1\rangle\cong\KK$.
If we denote by $v_j$ the vertices, then $\ker\partial^\chi_1$ 
is the free-module generated by $v_2,v_4-v_3,(t+1)v_1-v_3$.
Since the image of $\partial^\chi_2$ is the free module generated
by $(t-1)v_2$ and $(t^2-1)(v_4-v_3)$ we have that 
$H_1(\bt^\chi;\KK)=\KK[t^{\pm 1}]\oplus(\KK[t^{\pm 1}]/\langle t-1\rangle)^2\oplus\KK[t^{\pm 1}]/\langle t+1\rangle$. 
In particular, it does not satisfy \autoref{thm:main} since $\tilde h_0(\cF)=0$, but $H_1(\bt^\chi;\KK)$ 
is not a torsion module.
\end{example}

\begin{example}
We consider the graph $\Gamma$ in \autoref{fig:kite} and the even character $\rho$ defined by its labels.
The flag complex~$\cF$ is obtained by adding the triangle.

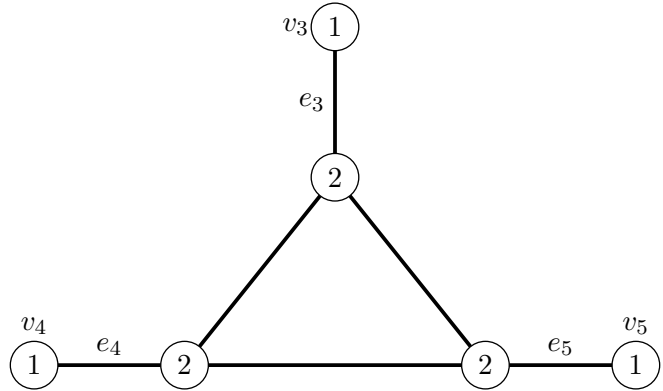
\begin{figure}[ht]
\begin{center}
\begin{tikzpicture}
\Vertex[style={minimum size=1.0cm,shape=circle},LabelOut=false,L=\hbox{$2$},x=0.0cm,y=2.5cm]{v0}
\Vertex[style={minimum size=1.0cm,shape=circle},LabelOut=false,L=\hbox{$2$},x=-2.0cm,y=0.0cm]{v1}
\Vertex[style={minimum size=1.0cm,shape=circle},LabelOut=false,L=\hbox{$2$},x=2.0cm,y=0.0cm]{v2}
\Vertex[style={minimum size=1.0cm,shape=circle},LabelOut=false,L=\hbox{$1$},x=0.0cm,y=4.5cm]{v3}
\Vertex[style={minimum size=1.0cm,shape=circle},LabelOut=false,L=\hbox{$1$},x=-4.0cm,y=0.0cm]{v4}
\Vertex[style={minimum size=1.0cm,shape=circle},LabelOut=false,L=\hbox{$1$},x=4.0cm,y=0.0cm]{v5}
\Edge[lw=0.05cm](v0)(v1)
\Edge[lw=0.05cm](v0)(v2)
\Edge[lw=0.05cm](v1)(v2)
\Edge[lw=0.05cm](v0)(v3)
\Edge[lw=0.05cm](v1)(v4)
\Edge[lw=0.05cm](v2)(v5)
\node[left=6pt] at (v3) {$v_3$};
\node[above=8pt] at (v4) {$v_4$};
\node[above=8pt] at (v5) {$v_5$};
\node[above] at ($.5*(v4)+.5*(v1)$) {$e_4$};
\node[above] at ($.5*(v5)+.5*(v2)$) {$e_5$};
\node[left] at ($.5*(v3)+.5*(v0)$) {$e_3$};
\end{tikzpicture}
\caption{Kite graph}
\label{fig:kite}
\end{center}
\vspace{-3mm}
\end{figure}
We have
\[
\Gamma_0=V,\quad\V_0=\{v_3,v_4,v_5\},\quad \Gamma_1=e_3\cup e_4\cup e_5,\quad 
\cF^2_j=\Gamma,\ j=0,1,2.
\]
We deduce that the free part of the homology vanishes. 

Since $\rk\partial_1=5$, the  dimension of the $(t-1)$-primary part for $H_1(A^\rho;\KK)$ is~$5$ and it is semisimple.
Using \autoref{cor:H1}, the dimension of the $(t+1)$-primary part for $H_1(A^\rho;\KK)$ is $6+3-2-3=4$. 
Also, since the map $\tilde{H}_0(\V_0)\to\tilde{H}_0(\Gamma_1)$ is an isomorphism of vector spaces of dimension~$2$. 
Hence, the $(t+1)$-primary part for $H_1(A^\rho;\KK)$ consists of two blocks with exponent~$2$.

As for $H_2(A^\rho;\KK)$, let us consider its $(t-1)$-primary part whose exponent is given as $\rk\partial_2=1$. 
Using \autoref{thm:main}, the dimension of the $(t+1)$-primary part for $H_2(A^\rho;\KK)$ can be obtained by
\[
\n_{1,1}+2\n_{1,2}+3\n_{1,3}=
\sum_{j=0}^2 \tilde h_1(\cF^2_j)+\sum_{j=0}^1 \tilde h_1(\Gamma_j)-2\tilde{h}_1(\Gamma)= 3-2=1,
\]
i.e. $\n_{1,1}=1$, $\n_{1,2}=\n_{1,3}=0$, hence it is semisimple with one block:
$$
H_2(\bt^\rho;\KK)=\KK[t^{\pm 1}]\oplus
\frac{\KK[t^{\pm 1}]}{\langle t-1\rangle}\oplus
\frac{\KK[t^{\pm 1}]}{\langle t+1\rangle}.
$$
It is easy to check that $H_3(\bt^\rho;\KK)=0$.
\end{example}

\begin{example}
Let us consider the graph~$\Gamma$ of \autoref{fig:triangle} whose labels define a character~$\rho$. 
The flag complex~$\cF$ has dimension~$2$ (its realization is the big triangle in \autoref{fig:triangle}). 
Note that 
\[
\Gamma_0=\V,\quad \Gamma_1=\partial\cF,\quad \V_0=\{v_1,v_2,v_3\},\quad
\cF^2_0=\cF^2_1=\Gamma,\quad \cF_2^2=\overline{\cF\setminus\Delta}.
\]

\begin{figure}[ht]
\begin{center}
\begin{tikzpicture}
\Vertex[style={minimum size=1.0cm,shape=circle},LabelOut=false,L=\hbox{$1$},x=0.0cm,y=3cm]{v0}
\Vertex[style={minimum size=1.0cm,shape=circle},LabelOut=false,L=\hbox{$1$},x=-2.0cm,y=0.0cm]{v1}
\Vertex[style={minimum size=1.0cm,shape=circle},LabelOut=false,L=\hbox{$1$},x=2.0cm,y=0.0cm]{v2}
\Vertex[style={minimum size=1.0cm,shape=circle},LabelOut=false,L=\hbox{$2$},x=0.0cm,y=0cm]{v3}
\Vertex[style={minimum size=1.0cm,shape=circle},LabelOut=false,L=\hbox{$2$},x=-1.0cm,y=1.5cm]{v4}
\Vertex[style={minimum size=1.0cm,shape=circle},LabelOut=false,L=\hbox{$2$},x=1.0cm,y=1.5cm]{v5}
\Edge[lw=0.05cm](v0)(v4)
\Edge[lw=0.05cm](v4)(v1)
\Edge[lw=0.05cm](v1)(v3)
\Edge[lw=0.05cm](v3)(v2)
\Edge[lw=0.05cm](v2)(v5)
\Edge[lw=0.05cm](v5)(v0)
\Edge[lw=0.05cm](v4)(v3)
\Edge[lw=0.05cm](v3)(v5)
\Edge[lw=0.05cm](v5)(v4)
\node[right=8pt] at (v0) {$v_1$};
\node[left=8pt] at (v1) {$v_2$};
\node[right=8pt] at (v2) {$v_3$};
\node at ($1/3*(v3)+1/3*(v4)+1/3*(v5)$) {$\Delta$};
\end{tikzpicture}
\caption{}
\label{fig:triangle}
\vspace{-2mm}
\end{center}
\end{figure}

The dimension of the $(t-1)$-primary part for $H_1(A^\rho;\KK)$ is~$\rk\partial_1=5$ and it is semisimple.
The dimension of the $(t+1)$-primary part for $H_1(A^\rho;\KK)$ is $6+1-2-3=2$. 
The target of $\tilde{H}_0(\V_0)\to\tilde{H}_0(\Gamma_1)$ is zero; hence, the $(t+1)$-primary part 
for $H_1(A^\rho;\KK)$ consists of two blocks with exponent~$1$.

The $(t-1)$-primary part of $H_2(A^\rho;\KK)$ is $\rk\partial_2=4$ (and semisimple). The dimension
of the $(t+1)$-primary part for $H_2(A^\rho;\KK)$ is given by \autoref{thm:main} since 
\[
\tilde h_1(\cF_0^2)=\tilde h_1(\cF_1^2)=4,\quad \tilde h_1(\cF_2^2)=1,\quad \tilde h_1(\Gamma_0)=0,
\quad \tilde h_1(\Gamma_1)=1,\quad \tilde{h}_1(\Gamma)=4. 
\]
One has $\n_{1,1}+2\n_{1,2}+3\n_{1,3}=2$.
Since $H_1(\Gamma_0)=0$ there is no block with exponent~$3$ as expected. Moreover, the map
$H_1(\Gamma_1)\to H_1(\cF^2)$ is an isomorphism of vector spaces of dimension~$1$ and we deduce
that the $(t+1)$-primary part for $H_2(A^\rho;\KK)$ consists of $1$ block of exponent~$2$.
It is easily checked that $H_3(A^\rho;\KK)=0$.
\end{example}

\begin{example}
The graph $\Gamma$ together with the character $\rho$ defined by the labels as shown in \autoref{fig:block3}
provides an example of a maximal exponent $3$ block in $H_2(A^\rho;\KK)$. 

\begin{figure}[ht]
\begin{center}
\begin{tikzpicture}
\Vertex[style={minimum size=1.0cm,shape=circle},LabelOut=false,L=\hbox{$1$},x=-2cm,y=-2cm]{v0}
\Vertex[style={minimum size=1.0cm,shape=circle},LabelOut=false,L=\hbox{$1$},x=-2.0cm,y=2cm]{v1}
\Vertex[style={minimum size=1.0cm,shape=circle},LabelOut=false,L=\hbox{$1$},x=2.0cm,y=2.0cm]{v2}
\Vertex[style={minimum size=1.0cm,shape=circle},LabelOut=false,L=\hbox{$1$},x=2.0cm,y=-2cm]{v3}
\Vertex[style={minimum size=1.0cm,shape=circle},LabelOut=false,L=\hbox{$2$},x=-1.0cm,y=-1cm]{v4}
\Vertex[style={minimum size=1.0cm,shape=circle},LabelOut=false,L=\hbox{$2$},x=1.0cm,y=-1cm]{v5}
\Vertex[style={minimum size=1.0cm,shape=circle},LabelOut=false,L=\hbox{$2$},x=0.0cm,y=1cm]{v6}
\Edge[lw=0.05cm](v0)(v1)
\Edge[lw=0.05cm](v1)(v2)
\Edge[lw=0.05cm](v2)(v3)
\Edge[lw=0.05cm](v3)(v0)
\Edge[lw=0.05cm](v4)(v5)
\Edge[lw=0.05cm](v5)(v6)
\Edge[lw=0.05cm](v6)(v4)
\Edge[lw=0.05cm](v0)(v4)
\Edge[lw=0.05cm](v4)(v1)
\Edge[lw=0.05cm](v1)(v6)
\Edge[lw=0.05cm](v6)(v2)
\Edge[lw=0.05cm](v2)(v5)
\Edge[lw=0.05cm](v5)(v3) 
\Edge[lw=0.05cm](v3)(v4)
\node[left=6pt] at (v0) {$v_1$};
\node[left=6pt] at (v1) {$v_2$};
\node[right=6pt] at (v2) {$v_3$};
\node[right=6pt] at (v3) {$v_4$};
\node[above] at ($.5*(v4)+.5*(v5)$) {$e_1$};
\node[left] at ($.5*(v4)+.5*(v6)$) {$e_2$};
\node[right] at ($.5*(v5)+.5*(v6)$) {$e_3$};
\node at ($1/3*(v0)+1/3*(v1)+1/3*(v4)$) {$\Delta_1$};
\node at ($1/3*(v1)+1/3*(v2)+1/3*(v6)$) {$\Delta_2$};
\node at ($1/3*(v2)+1/3*(v3)+1/3*(v5)$) {$\Delta_3$};
\node at ($1/3*(v0)+1/3*(v3)+1/3*(v4)$) {$\Delta_4$};
\node at ($.3*(v4)+.3*(v5)+.4*(v6)$) {$\Delta$};
\end{tikzpicture}
\caption{The labeled graph $\Gamma$}
\label{fig:block3}
\end{center}
\end{figure}
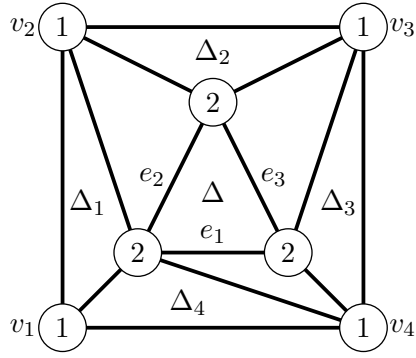

Let us consider the filtered subcomplexes:
\begin{gather*}
\V_0=\{v_1,v_2,v_3,v_4\},\quad\Gamma_0=\V\cup\partial\cF,\quad 
\Gamma_1=\overline{\Gamma\setminus(e_1\cup e_2\cup e_3)},\quad 
\cF^2_0=\Gamma,\\
\cF^2_1=\Gamma\cup\Delta_1\cup\Delta_2\cup\Delta_3\cup\Delta_4,\quad 
\cF^2_2=\overline{\cF\setminus\Delta}.
\end{gather*}
Since $\Gamma_1$ is connected, $(t+1)$-primary part of $H_1(A^\rho;\KK)$ is semisimple.
Using \autoref{cor:H1}, its dimension is $4+1-2-3=0$, i.e., this primary part vanishes.
Hence $H_1(A^\rho;\KK)\cong\left(\KK[t^{\pm 1}]/\langle(t-1)\rangle\right)^6$.

The map $\tilde{H}_1(\Gamma_0)\to \tilde{H}_1(\cF^{2}_{2})$ is an isomorphism
between $1$-dimensional spaces; by \autoref{thm:main} 
$\n_{1,3}(2) = 1$, hence there is one copy $\KK[t^{\pm 1}]/\langle(t+1)^3\rangle$ 
in the $(t+1)$-primary part of $H_2(A^\rho;\KK)$.
Since 
\[
\n_{1,1}(2)+2\n_{1,2}(2)+3\n_{1,3}(2)\!=\!\sum_{j=0}^2\tilde{h}_1(\cF^2_j)+\sum_{j=0}^1\tilde{h}_1(\Gamma_j)-2\tilde{h}_1(\Gamma)=
(8+4+1)+(5+1)-2\cdot 8=3,
\]
one obtains
\[
H_2(A^\rho;\KK)\cong\left(\KK[t^{\pm 1}]/\langle(t-1)\rangle\right)^8\oplus\KK[t^{\pm 1}]/\langle(t+1)^3\rangle.
\]
Finally, one can check that $H_j^-(\mathbb{T}^{\rho_2};\KK)$ has dimension $0,0,1,1$ for $j=0,1,2,3$; as a consequence,
$H_3(A^\rho;\KK)=0$.
\end{example}


\providecommand{\bysame}{\leavevmode\hbox to3em{\hrulefill}\thinspace}
\providecommand{\MR}{\relax\ifhmode\unskip\space\fi MR }
\providecommand{\MRhref}[2]{%
  \href{http://www.ams.org/mathscinet-getitem?mr=#1}{#2}
}
\providecommand{\href}[2]{#2}

\end{document}